\documentclass[11pt,leqno,english]{amsart}
\makeatletter
\def\paragraph{\@startsection{paragraph}{4}%
  \z@{.5\linespacing}{.5\linespacing}%
  {\bfseries}}
\def\@secnumfont{\ifnum\@toclevel>1\bfseries\else\mdseries\fi}
\makeatother

\usepackage[left=1.3in,top=1.3in,right=1.3in,nofoot]{geometry}
\usepackage{amsmath}
\usepackage{amsfonts}
\usepackage{amssymb}
\usepackage{amscd}
\usepackage[titletoc,title]{appendix}
\usepackage{appendix}
\usepackage{hyperref}

\setcounter{tocdepth}{1}





\newcommand{\Z}{\mathbb{Z}}


\newcommand{\cn}{\mathcal{N}}

\newcommand{\cx}{\mathcal{X}}
\newcommand{\cz}{\mathcal{Z}}
\newcommand{\cV}{\mathcal{V}}
\newcommand{\cy}{\mathcal{Y}}

\newcommand{\bb}{\mathbf{B}}

\newcommand{\bg}{\mathbf{G}}

\newcommand{\bm}{\mathbf{M}}

\newcommand{\bh}{\mathbf{H}}
\newcommand{\bn}{\mathbf{N}}
\newcommand{\bp}{\mathbf{P}}

\newcommand{\bu}{\mathbf{U}}
\newcommand{\bz}{\mathbf{Z}}

\DeclareMathOperator{\id}{{id}}
\DeclareMathOperator{\St}{{St}}

\DeclareMathOperator{\GL}{GL}

\DeclareMathOperator{\Hom}{{Hom}}
\DeclareMathOperator{\End}{{End}}

\DeclareMathOperator{\Ind}{{Ind}}
\DeclareMathOperator{\ind}{{ind}}

\DeclareMathOperator{\Mod}{{Mod}}

\DeclareMathOperator{\Aut}{{Aut}}

\DeclareMathOperator{\Id}{{Id}}

\newtheorem{montheo}{Theorem}
\newtheorem{monlem}{Lemma}
\newtheorem{maprop}{Proposition}
\newtheorem{madef}{Definition}
\newtheorem{moncor}{Corollary}
\newtheorem{remark}{Remark}

\usepackage{xcolor}
\definecolor{teal}{rgb}{0.0, 0.5, 0.5}
\definecolor{forest}{rgb}{0.13, 0.55, 0.13}

\begin{document}

\title[Representations of a  $p$-adic group in characteristic $p$]
{Representations of a   $p$-adic group  in characteristic $p$}


\author{G. Henniart} 
\address[G. Henniart]{Universit\'e de Paris-Sud, Laboratoire de Math\'ematiques d'Orsay, Orsay cedex F-91405 France;
CNRS, Orsay cedex F-91405 France}
\email{Guy.Henniart@math.u-psud.fr}


\author{M.-F. Vign\'eras}
\address[M.-F. Vign\'eras]{Institut de Math\'ematiques de Jussieu-Paris Rive Gauche, 4 Place Jussieu, Paris 75005 France}
\email{marie-france.vigneras@imj-prg.fr}

\date{}

\begin{abstract}
Let $F$ be a locally compact non-archimedean field of residue characteristic $p$,   $\textbf{G}$ a connected reductive group over $F$, and $R$ a field of characteristic $p$. When $R$ is  algebraically closed,  the   irreducible admissible $R$-representations of $G=\bg(F)$  are classified in \cite{AHHV} in term of supersingular $R$-representations of the Levi subgroups of $G$ and parabolic induction; there is a similar classification  for the simple modules of the pro-$p$ Iwahori Hecke $R$-algebra $H(G)_R$ in \cite{Abe}.  
In this paper, we show that both classifications   hold true when $R$ is not  algebraically closed.    \end{abstract}
\maketitle






\tableofcontents

\section{Introduction}\label{I}

\subsection{}\label{I.1} 
In this paper, $p$ is a prime number, $F$ is a locally compact non-archimedean field of residual characteristic $p$,  $\bg$ is a connected reductive group over $F$, finally $R$ is a field; in this introduction $R$ has   characteristic $p$ - except in \S \ref{I.2}.

Recent applications of automorphic forms to number theory have imposed the study of smooth representations of $G=\bg(F)$  on $R$-vector spaces; indeed one expects a strong relation, \`a la Langlands, with $R$-representations of the Galois group of $F$ - the only established case, however, is that of $GL(2,\mathbb Q_p)$.

The first focus is on irreducible representations.
 When  $R$ is an algebraically closed, the irreducible admissible   $R$-representations  of  $G$ have been classified in terms of parabolic induction of supersingular  $R$-representations of Levi subgroups of $G$  \cite{AHHV}. But the restriction to algebraically closed $R$ is undesirable: for example, in the work of Breuil and Colmez on $GL(2,\mathbb Q_p)$, $R$ is often finite. Here we extend to any $R$ the classification of \cite{AHHV} and its consequences.
 
 \bigskip 
 Let $I$ be a pro-$p$ Iwahori subgroup of $G$. If $W$ is a smooth $R$-representation of $G$, the  fixed point $W^I$ is a right module over the Hecke ring $H(G)$ of $I$ in $G$; it is non-zero if $W$ is, and finite dimensional if $W$ is admissible. Even though $W^I$ might not be simple over $H(G)$ when $W$ is irreducible, it is important to study simple $R\otimes H(G)$-modules. When $R$ is 
 algebraically closed, they have been 
  classified  (\cite{Abe}, see also \cite[Cor:4.30]{AHenV2}) in terms of supersingular 
  $R\otimes H(M)$-modules, where $M$ is a Levi subgroup of $G$ and $H(M)$ the 
  Hecke ring  of $I\cap M$ in $M$. The classification uses  a parabolic induction process from $H(M)$-modules  to $H(G)$-modules. Again we extend that classification to any $R$.
   
\subsection{} \label{I.2}Before we state our main results more precisely, let us describe our principal tool for reducing them to the known case where $R$ is algebraically closed - those tools are developped in section \ref{S:2}.

The idea is to introduce an algebraic closure $R^{alg}$ of $R$, and study the scalar extension $W\mapsto R^{alg}\otimes_R W$ from $R$-representations of $G$ to $R^{alg}$-representations of $G$, or from $R\otimes H(G)$-modules to $R^{alg}\otimes H(G)$-modules. The important remark is that when $W$ is an irreducible admissible $R$-representation of $G$, or a simple  $R\otimes H(G)$-module, its commutant has finite dimension over $R$. The following result examines what happens for more general extensions $R'$ of $R$.

\begin{montheo}\label{thm:DA}{\rm[Decomposition theorem]}  Let $R$ be a field, $A$ an  $R$-algebra\footnote{all our algebras are associative with unit}  and $V$ a simple $A$-module with  commutant  $D=\End_A V$  of finite dimension over $R$.  
     Let $E$ denote the center of  the skew field $D$, $\delta$ the reduced degree of $D$ over $E$ and $E_{sep}$ the maximal separable extension of $R$ contained in $E$.       

Let   $R'$  be a  normal extension of $R$ containing  a   finite separable  extension   of $E$ splitting $D $. Then 
the scalar extension $V_{R'}$ of $V$ to $R'$ has length $\delta [E:R]$ and is a direct sum 
  $$V_{R'} \simeq \oplus^\delta \oplus_{j\in \Hom_R(E_{sep},R')}V_{R',j}$$
of $\delta$ copies of a direct sum of  $[E_{sep}:R]$ indecomposable  $A_{R'}$-modules $V_{R',j}$ 
of commutant the local artinian ring $R'\otimes_{j, E_{sep}} E$. For each $j$, $V_{R',j}$ has  length  $[E:E_{sep}]$,  simple subquotients all  isomorphic to 
   $ R' \otimes _{( R'\otimes_{ E_{sep}}E)}V_{R',j} $ of  commutant $R'$, and descends to a finite extension of $R$. The $V_{R',j}$ are not isomorphic to each other and    form   a single $ \Aut_R(R')$-orbit.   
   
      The  map sending $V$ to  the $ \Aut_R(R')$-orbit of $ R' \otimes _{( R'\otimes_{ E_{sep}}E)}V_{R',j}  $ 
   induces a bijection 

-  from   the set of isomorphism classes $[V]$ of simple $A$-modules $V$ with  commutant  of  finite dimension over $R$ (resp.  $V$ of finite dimension over $R$),

- to  the set of  $\Aut_R (R')$-orbits  of the isomorphism classes  $[V']$ of   simple $A_{R'}$-modules $V'$ with  commutant of finite dimension over $R'$   descending to a finite extension of $R$ (resp. $V'$ of finite dimension over $R'$).
 \end{montheo}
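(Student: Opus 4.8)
The plan is to reduce the structural statement to the ring theory of the commutant of $V_{R'}$, and then to deduce the bijection by a Clifford-type argument. First I would identify that commutant: the extension--restriction adjunction for $A\to A_{R'}$ gives $\End_{A_{R'}}(V_{R'})=\Hom_A(V,R'\otimes_R V)$; choosing an $R$-basis of $R'$ writes $R'\otimes_R V$ as a direct sum of copies of the cyclic module $V$, and an $A$-homomorphism out of $V$ meets only finitely many of them, so $\End_{A_{R'}}(V_{R'})\simeq R'\otimes_R D=:D_{R'}$ (up to replacing a ring by its opposite). Since $D$ is finite over $R$, $D_{R'}$ is Artinian, and I would use the standard fact that $V_{R'}$ then has finite length over $A_{R'}$, its decomposition into indecomposables---together with the commutants, lengths and composition factors of the summands---being dictated by the block structure of $D_{R'}$ through a complete set of primitive orthogonal idempotents of $D_{R'}=\End_{A_{R'}}(V_{R'})$; in particular $V_{R'}$ and $D_{R'}$ have equal length.

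So the heart of the matter is to describe the Artinian ring $D_{R'}=R'\otimes_R D$. Writing $E$ as $E_{sep}$ followed by a purely inseparable extension gives $R'\otimes_R E=(R'\otimes_R E_{sep})\otimes_{E_{sep}}E=\prod_{j\in\Hom_R(E_{sep},R')}B_j$ with $B_j:=R'\otimes_{j,E_{sep}}E$. Because $R'$ is normal over $R$ and contains $E$, it contains the $p$-power roots imposed by the purely inseparable part of $E$ along every embedding $j$, so each $B_j$ is local Artinian with residue field $R'$ and $\dim_{R'}B_j=[E:E_{sep}]$; moreover $\Aut_R(R')$ acts transitively on $\Hom_R(E_{sep},R')$, carrying $B_{j_0}$, which comes from the inclusion $E\subseteq R'$, onto $B_j$. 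Hence $D_{R'}=\prod_j(D\otimes_E B_j)$, each factor is Azumaya over $B_j$ and is $\Aut_R(R')$-conjugate to $D\otimes_E B_{j_0}$, whose residue algebra is $D\otimes_E R'\simeq M_\delta(R')$ because $R'$ contains a separable splitting field $L\supseteq E$ of $D$. An Azumaya algebra over a local ring that splits modulo its maximal ideal is a matrix algebra, so $D\otimes_E B_j\simeq M_\delta(B_j)$ for every $j$; a primitive idempotent of $M_\delta(B_j)$ has corner ring $B_j$, so the $j$-part of $V_{R'}$ is a sum of $\delta$ copies of an indecomposable $V_{R',j}$ with commutant $B_j=R'\otimes_{j,E_{sep}}E$, of length $\dim_{R'}B_j=[E:E_{sep}]$, all of whose composition factors are isomorphic to the simple module $S_j:=V_{R',j}/\mathfrak m_{B_j}V_{R',j}=R'\otimes_{B_j}V_{R',j}$ of commutant $R'$. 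This is precisely the displayed decomposition, of total length $[E_{sep}:R]\,\delta[E:E_{sep}]=\delta[E:R]$. As all input data are finite ($V$ fixed, $D$ and $E$ finite over $R$), the idempotents cutting out $V_{R',j}$ and $S_j$ are already defined over a finite subextension of $R'$, so these modules descend there; finally the $S_j$ are pairwise non-isomorphic because $E_{sep}\subseteq D$ acts on $S_j$ through the scalar embedding $j$, and by the transitivity above they form a single $\Aut_R(R')$-orbit.

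For the bijection I would use restriction of scalars. The key point is that if $S$ is a composition factor of $V_{R'}$, then $S|_A$ is a subquotient of the semisimple module $V_{R'}|_A=\bigoplus V$, hence semisimple; since $\operatorname{soc}_A(S)$ is stable under $R'$ it is an $A_{R'}$-submodule, so $S|_A$ is in fact isotypic, and $\Hom_{A_{R'}}(V_{R'},S)=\Hom_A(V,S|_A)$ shows that its isotype is $V$. So the composition factors of $V_{R'}$ are exactly the $S_j$, and $[V]\mapsto(\Aut_R(R')\text{-orbit of the }S_j)$ is well defined, landing in orbits of a single class. It is injective because if the orbits attached to $V_1$ and $V_2$ share a class $S$, then $V_1\simeq V_S\simeq V_2$ with $V_S$ the isotype of $S|_A$. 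For surjectivity, given a simple $A_{R'}$-module $V'$ with finite-dimensional commutant over $R'$ descending to a simple module $V''$ over a finite extension $R''/R$, I would reduce to the finite extension $R''/R$ (treated directly): there $V''$ occurs in $V_{R''}$ for a simple $A$-module $V$ whose commutant, embedded in $\End_{A_{R''}}(V'')$, is finite over $R$, whence $V'$ occurs in $V_{R'}=R'\otimes_{R''}V_{R''}$ and $[V']$ lies in the orbit attached to $[V]$. The variant for finite-dimensional modules is immediate, since $V$ is finite over $R$ iff $V_{R'}$ is finite over $R'$ iff each $S_j$ is, and finite-dimensional modules have finite-dimensional commutants.

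The only genuine obstacle I anticipate is the pair of assertions in the second paragraph that really use the hypotheses: that $R'\otimes_{j,E_{sep}}E$ is local with residue field \emph{exactly} $R'$, and that $D\otimes_E B_j$ is a full matrix ring over $B_j$. Both rest on the normality of $R'/R$---so that $\Aut_R(R')$ is transitive on $\Hom_R(E_{sep},R')$ and $R'$ absorbs the $p$-power roots appearing in $E$---together with the presence of a separable splitting field of $D$ inside $R'$, and on the Azumaya-theoretic fact that the Brauer group of a local ring injects into that of its residue field. Once that ring-theoretic core is in place, the passage back to the $A_{R'}$-module decomposition of $V_{R'}$ and the restriction-of-scalars arguments for the bijection are essentially formal.
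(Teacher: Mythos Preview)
Your proposal is correct and follows the paper's overall strategy: reduce everything to the structure of the Artinian ring $D_{R'}=R'\otimes_R D$ by decomposing its center $E_{R'}=\prod_j B_j$ with $B_j=R'\otimes_{j,E_{sep}}E$, and then transport the block decomposition back to $V_{R'}$ via the lattice correspondence between right ideals of $D_{R'}$ and $A_{R'}$-submodules of $V_{R'}$. Your bijection argument (restriction to $A$ is $V$-isotypic for injectivity; descent to a finite intermediate extension for surjectivity) is likewise the paper's, which packages the surjectivity step as a separate lemma.

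The one genuine difference is how you establish $D\otimes_E B_j\simeq M_\delta(B_j)$. The paper does a direct tensor manipulation: it picks an embedding $g\colon E'\hookrightarrow R'$ extending $j$ and rewrites $R'\otimes_{j,E_{sep}}D$ through $E'\otimes_E D\simeq M_\delta(E')$. Your route---$D\otimes_E B_j$ is Azumaya over the Artinian local ring $B_j$, its reduction modulo $\mathfrak m_{B_j}$ is $M_\delta(R')$, and matrix units lift through the nilpotent radical---is more conceptual, at the price of invoking (or reproving) idempotent lifting. Similarly, for $B_j$ local with residue field $R'$ you use normality of $R'/R$ together with $E\subseteq R'$ to absorb the $p$-power roots, whereas the paper isolates a standalone lemma, valid for \emph{any} field $L$ in place of $R'$, that $L\otimes_K E$ is Artinian local with residue field $L$ whenever $E/K$ is finite purely inseparable; the paper's lemma is slightly more general, but both suffice here. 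Finally, for $\End_{A_{R'}}(S_j)=R'$ the paper argues by absolute simplicity (the length of $V_{L,j}$ is unchanged under any further base change $L/R'$), while you read it off the idempotent/Morita picture; both are valid.
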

 
  Thm.\ref{thm:DA} implies without difficulty:

\begin{moncor}\label{cor:DA}  
 For any  extension $L/R$,
the length of $V_{L} $ is $\leq \delta [E:R]$,
 and  the dimension over $L$ of the commutant of any subquotient of $V_{L}$  is finite.\end{moncor}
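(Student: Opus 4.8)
The plan is to reduce both assertions, by a faithfully flat base change, to the situation already handled in Theorem~\ref{thm:DA}; the entire argument is flatness bookkeeping together with one elementary lemma. First I record two remarks valid for an arbitrary further field extension $L\subseteq\Omega$, for which $V_\Omega=\Omega\otimes_L V_L$. (i)~Tensoring a chain of $A_L$-submodules of $V_L$ with nonzero successive quotients by the faithfully flat $L$-algebra $\Omega$ yields a chain in $V_\Omega$ of the same length with nonzero quotients, so $\ell(V_\Omega)\geq\ell(V_L)$; likewise, any subquotient $N$ of $V_L$ gives a subquotient $N_\Omega=\Omega\otimes_L N$ of $V_\Omega$. (ii)~Since $\Omega$ is free over $L$, the canonical $\Omega$-linear map $\Omega\otimes_L\End_{A_L}(N)\to\End_{A_\Omega}(N_\Omega)$ is injective (evaluate at $1\otimes n$ and use $\Omega\otimes_L N=\bigoplus_\alpha e_\alpha\otimes N$ for an $L$-basis $(e_\alpha)$ of $\Omega$), so $\dim_L\End_{A_L}(N)\leq\dim_\Omega\End_{A_\Omega}(N_\Omega)$. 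Taking $\Omega$ to be an algebraic closure of $L$, it therefore suffices to bound $\ell(V_\Omega)$ by $\delta[E:R]$ and to show that every subquotient of $V_\Omega$ has finite-dimensional commutant over $\Omega$.

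Inside $\Omega$, let $R^{\mathrm{alg}}$ be the algebraic closure of $R$. It is a normal extension of $R$, and it contains a finite separable extension of $E$ splitting $D$ (a finite-dimensional division algebra admits a separable splitting field, which is finite over $R$ and hence embeds in $R^{\mathrm{alg}}$). So Theorem~\ref{thm:DA} applies with $R'=R^{\mathrm{alg}}$: the module $V_{R^{\mathrm{alg}}}$ has finite length $\delta[E:R]$, and each of its composition factors $S$ has commutant $\End_{A_{R^{\mathrm{alg}}}}(S)=R^{\mathrm{alg}}$.

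The one substantial point is the following lemma: \emph{a simple module $S$ over a $k$-algebra $B$ with $\End_B(S)=k$ is absolutely simple}, i.e.\ $S_K=K\otimes_k S$ is simple over $B_K$ for every field extension $K/k$. To prove it, let $M\subseteq S_K$ be a nonzero $B_K$-submodule and choose $0\neq v=\sum_\alpha e_\alpha\otimes v_\alpha\in M$ (coordinates relative to a $k$-basis $(e_\alpha)$ of $K$) with $\#\{\alpha:v_\alpha\neq 0\}$ minimal; fix $\alpha_0$ with $v_{\alpha_0}\neq 0$. If $b\in B$ kills $v_{\alpha_0}$, then $bv\in M$ has strictly smaller support, so $bv=0$ by minimality; hence the maximal left ideal $\mathrm{Ann}_B(v_{\alpha_0})$ is contained in each $\mathrm{Ann}_B(v_\alpha)$, so these coincide when $v_\alpha\neq 0$ and the induced automorphism $b v_{\alpha_0}\mapsto b v_\alpha$ of $S$ lies in $\End_B(S)=k$; thus $v_\alpha\in k\,v_{\alpha_0}$ for all $\alpha$, whence $v\in K^\times(1\otimes v_{\alpha_0})\subseteq M$, so $1\otimes v_{\alpha_0}\in M$ and $M\supseteq B_K(1\otimes v_{\alpha_0})=K\otimes_k(Bv_{\alpha_0})=S_K$. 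Applying this with $k=R^{\mathrm{alg}}$, $B=A_{R^{\mathrm{alg}}}$ to the composition factors $S$ of $V_{R^{\mathrm{alg}}}$ shows that each $S_\Omega$ is simple over $A_\Omega$, with commutant $\End_{A_\Omega}(S_\Omega)=\Omega\otimes_{R^{\mathrm{alg}}}R^{\mathrm{alg}}=\Omega$. Since $V_\Omega=\Omega\otimes_{R^{\mathrm{alg}}}V_{R^{\mathrm{alg}}}$ and $\Omega$ is flat over $R^{\mathrm{alg}}$, the composition factors of $V_\Omega$ are the $S_\Omega$ and $\ell(V_\Omega)=\ell(V_{R^{\mathrm{alg}}})=\delta[E:R]$; with remark~(i) this gives $\ell(V_L)\leq\delta[E:R]$.

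Finally, let $N$ be a subquotient of $V_L$. Then $N_\Omega$ is a subquotient of $V_\Omega$, hence has finite length $\leq\delta[E:R]$, with all composition factors among the $S_\Omega$; since $\End(S_\Omega)=\Omega$, Schur's lemma gives $\dim_\Omega\Hom_{A_\Omega}(T,T')\leq 1$ for any composition factors $T,T'$ of $N_\Omega$. An induction along a composition series of $N_\Omega$, using left exactness of $\Hom$ in each variable, yields $\dim_\Omega\End_{A_\Omega}(N_\Omega)\leq\ell(N_\Omega)^2\leq(\delta[E:R])^2$, so $\dim_L\End_{A_L}(N)\leq(\delta[E:R])^2<\infty$ by remark~(ii). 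The only ingredient that is not pure flatness bookkeeping is the absolute-simplicity lemma, so I expect no genuine obstacle --- in keeping with the claim that the corollary follows ``without difficulty.''
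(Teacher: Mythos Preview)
Your proof is correct. The high-level strategy --- pass to an algebraic closure $\Omega$ of $L$, invoke Theorem~\ref{thm:DA} over $R^{\mathrm{alg}}\subset\Omega$, then push information back down --- is the same as the paper's. The details for the commutant bound, however, differ genuinely.

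The paper argues via the finite extension $E'/R$ (a Galois splitting field for $D$): every subquotient of $V_{L^{\mathrm{alg}}}$ descends to a subquotient $W$ of $V_{E'}$ (using that $V_{E'}$ and $V_{L^{\mathrm{alg}}}$ have the same length, forcing a lattice isomorphism), and $\End_{A_{E'}}(W)$ is finite-dimensional because it sits inside $\End_A(W)$, which is a matrix algebra over $D$ since $W$ is $V$-isotypic of finite length as an $A$-module. You instead stay entirely at $\Omega$: having shown the composition factors of $V_\Omega$ are absolutely simple with commutant $\Omega$, you bound $\dim_\Omega\End_{A_\Omega}(N_\Omega)$ by $\ell(N_\Omega)^2$ via left exactness of $\Hom$. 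Your route avoids the auxiliary field $E'$ and the descent/lattice argument, at the cost of proving the absolute-simplicity lemma (which the paper records without proof in a remark after \S\ref{S:21}) and a mildly weaker-looking bound. Both are clean; yours is arguably more self-contained.
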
 

  The second theorem is a criterion, inspired by \cite[Lemma 3.11]{AHenV1},  for  a  functor  to  preserve   the  lattice $\mathcal L_W$ of submodules of a module $W$.   
  
 \begin{montheo}{\rm[Lattice isomorphism]}\label{thm:latticeA} Let $F : \mathcal C\to  \mathcal D$ be  a functor   between abelian categories  of right adjoint $G$, unit $\eta:\id\to G\circ F$  and counit $\epsilon:F\circ G\to \id$.   
 
 Then 
   $F$ and $G$  induce  lattice isomorphisms inverse from each other  between   $\mathcal L_W$   and  $\mathcal L_{ F(W)}$, 
for any object  $W\in  \mathcal C $  of finite length satisfying the properties: 
 
 a)  The  unit morphism $\eta_W: W\to (G\circ F)(W)$ is an isomorphism.
 
b) $F(Y_1)\subset F(Y_2)$ for any  subobjects $Y_1\subset Y_2$ of $W$,  or $G(X)\neq 0$ for any non-zero  subobject  $X$ of $F(W)$.

c)   $F(Y)$ is simple for any  simple object $Y$ subquotient of $W$.

\noindent equivalent to the properties a), b'), c') where 
 
 b')  $G(X)$ is simple for any  simple   subquotient $X$ of $F(W)$.
 
c') The length of $F(W)$ is equal to the length of $W$.
 
\medskip  If $W$ satisfies a), b), c), then  any subquotient of $W$   satisfies  a), b) and c).

\end{montheo}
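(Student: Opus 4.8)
The plan is to argue directly with the lattices of subobjects. Since $F$ is a left adjoint it is right exact, and $G$ is left exact; I use this, the triangle identities, and the naturality of $\eta,\epsilon$ throughout. Hypothesis a) lets me identify $W$ with $GF(W)$ along $\eta_W$, and then $F\eta_W$ and $\epsilon_{FW}$ are isomorphisms as well. Define $\varphi\colon\mathcal L_W\to\mathcal L_{F(W)}$ by $\varphi(Y)=\operatorname{Im}\bigl(F(Y\hookrightarrow W)\bigr)$ and $\gamma\colon\mathcal L_{F(W)}\to\mathcal L_W$ by $\gamma(X)=G(X)\subseteq GF(W)=W$ (legitimate because $G$, being left exact, preserves monomorphisms). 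Both are order-preserving, send bottom to bottom and top to top, and $\gamma$ moreover preserves intersections because $G$ preserves finite limits. Naturality of $\eta$ (with $\eta_W$ invertible) gives $Y\subseteq\gamma\varphi(Y)$ for every $Y$, and naturality of $\epsilon$ (with $\epsilon_{FW}$ invertible) gives $\varphi\gamma(X)=\operatorname{Im}(\epsilon_X)\subseteq X$ for every $X$; thus $(\varphi,\gamma)$ is a Galois connection. Since an order-isomorphism of lattices is automatically a lattice isomorphism, the theorem will follow once I show $\gamma\varphi=\operatorname{id}$ and $\varphi\gamma=\operatorname{id}$.

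Next I record two d\'evissages. From c) alone, induction on length along the right exact sequence $F(Y)\to F(W)\to F(W/Y)\to 0$ gives $\ell(F(W'))\le\ell(W')$ for every subquotient $W'$ of $W$; in particular $F(W)$ has finite length, so $\mathcal L_{F(W)}$ is indeed a (modular, finite length) lattice. Assuming the first form of b), for $Y_1\subseteq Y_2\subseteq W$ the map $F(Y_1)\to F(Y_2)$ is mono, hence $0\to F(Y_1)\to F(Y_2)\to F(Y_2/Y_1)\to 0$ is exact: $F$ is exact and faithful on the subquotients of $W$. Consequently $\ell(F(W'))=\ell(W')$ for all such $W'$, $F$ carries Jordan--H\"older filtrations to Jordan--H\"older filtrations (so the first form of b) implies the second, using that on a simple subquotient $S$ the unit is nonzero — else $\operatorname{id}_{FS}=\epsilon_{FS}\circ F\eta_S=0$ with $FS$ simple by c) — hence a monomorphism), and $\varphi$ preserves intersections, sums and lengths; in particular $\varphi$ is an injective lattice homomorphism. (The two forms of b), together with a) and c), are in fact equivalent and equivalent to b'); I argue with the first form and indicate the passage at the end.) From the Galois identity $\varphi\gamma\varphi=\varphi$ and the injectivity of $\varphi$ I deduce $\gamma\varphi=\operatorname{id}$. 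This already yields: $\gamma$ is surjective; $\eta_Y$ is an isomorphism for every $Y\subseteq W$ (as $GF(Y)=G(\varphi Y)=\gamma\varphi(Y)=Y$); $\operatorname{Im}(\varphi)$ is a cover-preserving $(0,1)$-sublattice of $\mathcal L_{F(W)}$ of full length; and, comparing lengths in $\varphi\gamma(X)=\operatorname{Im}(\epsilon_X)$, the counit $\epsilon_X\colon F(G(X))\to X$ is a monomorphism for every $X\subseteq F(W)$.

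It remains to prove $\varphi\gamma=\operatorname{id}$, i.e.\ that $\varphi$ is surjective, equivalently that every $\epsilon_X$ is an isomorphism — this is the heart of the matter, and it is where the second form of b) is genuinely used: a cover-preserving full-length sublattice containing all atoms need not be the whole lattice (take a length-$3$ module with simple socle $S$ and semisimple top, and the sublattice $0\subset S\subset \ell\subset 1$ for one of the many $\ell$'s), so the module structure carried by the counit must enter, not just the lattice data. Suppose $X\subseteq F(W)$ has minimal length among those with $\epsilon_X$ not an epimorphism. Using the minimality and that $\gamma$ preserves intersections one shows that $X$ has a unique maximal subobject, equal to $\operatorname{Im}(\epsilon_X)=\varphi\gamma(X)$, and that $\ell(G(X))=\ell(X)-1$; in particular every proper subobject of $X$ lies inside $\operatorname{Im}(\epsilon_X)$, and every simple subobject of $X$ lies in $\operatorname{Im}(\epsilon_X)=\varphi(G(X))$. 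Now $G(X)\neq 0$ by the second form of b), and one obtains a contradiction by chasing $\epsilon$ along $G(X)\hookrightarrow W$ and along the simple quotient $X\twoheadrightarrow X/\!\operatorname{Im}(\epsilon_X)\cong F(S)$ (with $S$ a simple subquotient of $W$), using the triangle identities and that $\epsilon_{F(S)}$ is a split epimorphism. Hence $\varphi\gamma=\operatorname{id}$, and $\varphi,\gamma$ are mutually inverse lattice isomorphisms between $\mathcal L_W$ and $\mathcal L_{F(W)}$.

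Finally I read off the remaining assertions. The lattice isomorphism forces $\ell(F(W))=\ell(W)$, which is c'); and for a simple subquotient $X=X_2/X_1$ of $F(W)$ the covering $X_1\prec X_2$ is preserved by $\gamma$, so $G(X_2)/G(X_1)$ is simple and nonzero, while the $G$-side d\'evissage — now available from b), since $F$ is exact on subquotients — gives $\ell(G(X))\le\ell(X)=1$; hence $G(X)$ is simple, which is b'). Conversely, a), b'), c') give the $G$-side d\'evissage at once, so $\ell(W)\le\ell(F(W))\le\ell(W)$; equality forces $F$ to be exact on the subquotients of $W$, from which c) and the first form of b) follow, and the argument above applies — thus a),b),c) $\Leftrightarrow$ a),b'),c'). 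For stability under subquotients, any $W_2/W_1$ is a quotient of the subobject $W_2\subseteq W$; $W_2$ inherits a) (as $\eta_{W_2}$ is an isomorphism), the first form of b), and c), so the theorem holds for $W_2$; then $\eta_{W_2/W_1}$ is injective because its kernel is $\gamma\varphi(W_1)/W_1=0$ and surjective because $\ell(GF(W_2/W_1))\le\ell(F(W_2/W_1))=\ell(W_2/W_1)$, giving a) for $W_2/W_1$, while b) and c) pass along via the exactness of $F$ on subquotients; an appeal to the equivalence then gives all of a),b),c) for every subquotient. The step I expect to fight hardest for is the surjectivity of $\varphi$ in the third paragraph: the rest is formal manipulation of the adjunction and d\'evissage, but that step genuinely requires the non-vanishing hypothesis and a careful chase with the counit.
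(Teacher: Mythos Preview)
Your Galois-connection framework is clean and the reduction to $\gamma\varphi=\operatorname{id}$ is correct. The gap is exactly where you flag it: the surjectivity of $\varphi$ in your third paragraph. From the minimal counterexample $X$ you correctly extract that $X'=\varphi\gamma(X)$ is the unique maximal subobject and that the quotient map $q:X\to X/X'\cong F(S)$ satisfies $G(q)=0$. But then the naturality square for $\epsilon$ along $q$ only gives $q\circ\epsilon_X=\epsilon_{F(S)}\circ FG(q)=0$, which is merely the statement $\operatorname{Im}(\epsilon_X)\subseteq\ker q=X'$ that you already had; the fact that $\epsilon_{F(S)}$ is split epi does not produce a contradiction here, and the second form of b) applies only to subobjects of $F(W)$, not to $X/X'\subseteq F(W)/X'$. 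I was not able to complete the chase you sketch, and your own counterexample (the length-$3$ sublattice) shows why the lattice data alone cannot finish it. The same issue recurs later: your derivation of b') invokes ``the $G$-side d\'evissage'' to get $\ell(G(X))\le 1$, but that bound is precisely b') and is not yet available; and your proof that $\eta_{W_2/W_1}$ is surjective uses $\ell(GF(W_2/W_1))\le\ell(F(W_2/W_1))$, which again needs b').

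The paper avoids this by reversing the order: it first derives b') and c') from a),b),c) using the Jordan--H\"older filtration (its Step~2), and then uses b') to run the $G$-side length estimate $\ell(G(X))\le\ell(X)$ for all subquotients of $F(W)$ (Step~3), upgrades it to equality on subobjects by playing $X$ against $F(W)/X$ (Step~4), and concludes that $\gamma$ is injective by the intersection/sum argument (Step~5). Since you already have $\gamma\varphi=\operatorname{id}$, injectivity of $\gamma$ immediately gives $\varphi\gamma=\operatorname{id}$. The fix is therefore to move your derivation of b') \emph{before} the surjectivity step and to make it honest: every simple subquotient of $F(W)$ is isomorphic to some $F(S)$, and then one needs to bound $\ell(GF(S))$; the paper's length-count against $F(W)/X$ is what actually makes this work.
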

The proof starts by showing that $F$ defines an injective lattice map  $\mathcal L_W \hookrightarrow\mathcal L_{ F(W)}$ using a) and b), and then showing that the lattice map $\mathcal L_{F(W)} \hookrightarrow\mathcal L_{ (G\circ F)(W)}$ defined by $G$ is injective using c).

We end \S\ref{S:2} by another lattice isomorphism  for    the  functor $- \otimes_R V$ from $R$-vector spaces to $A$-modules, where $V$ is  a  simple $A$-module with commutant $R$,   inspired by \cite[Lemma 5.3]{Abe}.

\begin{montheo}{\rm[Lattice isomorphism and tensor product]} \label{thm:latticeA2} 
Let $R$ be a field, $A$ an $R$-algebra, $V$ a simple $A$-module with commutant $R$, and $W$ an $R$-vector space. Then, 

$W\otimes_R V$ is an isotypical $A$-module of type $V$,

the map $Y\mapsto Y\otimes_R V:\mathcal L_W\to \mathcal L_{W\otimes_R V}$  is a  lattice isomorphism,

the map $ W\to  \Hom_A(V,W\otimes_R V)$, sending $w\in W$ to $\varphi_w:v\mapsto w\otimes v$ is an isomorphism.

\noindent Let moreover, $b_W \in \End_R(W)$,  $b_V \in \End_R(V)$ and a subspace $Y$ of $W$.  
If $Y$ is stable by $b_W$, then $Y\otimes_R V$ is stable by $b_W \otimes b_V$. Conversely, if $Y\otimes_R V$ is stable by $b_W \otimes b_V$, then $Y$ is stable by $b_W$ provided that $b_V\neq 0$.
\end{montheo}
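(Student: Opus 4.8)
The plan is to establish the four parts in order, using that $V$ is a simple $A$-module with $\End_A(V)=R$. First I would note that $A$ acts on $W\otimes_R V$ only through the second factor, so every $A$-submodule is a sum of copies of $V$, i.e.\ $W\otimes_R V$ is isotypical of type $V$; concretely, fixing an $R$-basis $(w_i)$ of $W$ gives $W\otimes_R V\simeq \bigoplus_i V$. For the map $w\mapsto\varphi_w$, $W\to\Hom_A(V,W\otimes_R V)$: injectivity is clear (evaluate on any $v\neq 0$ using that $V$ is faithful over $R$ on itself), and for surjectivity I would write an arbitrary $\varphi\in\Hom_A(V,W\otimes_R V)$ in the basis as $\varphi=\sum_i w_i\otimes\varphi_i$ with $\varphi_i\in\End_A(V)=R$, so $\varphi=\varphi_w$ for $w=\sum_i\varphi_i w_i$. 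This is the standard ``$\Hom$ out of a simple module with scalar commutant'' computation and presents no obstacle.

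Next, for the lattice map $Y\mapsto Y\otimes_R V$: since $-\otimes_R V$ is exact (we are over a field $R$), this map is inclusion-preserving and injective, and visibly sends sums to sums. The content is surjectivity, i.e.\ every $A$-submodule $X\subseteq W\otimes_R V$ has the form $Y\otimes_R V$. Here I would use the previous point: set $Y=\{w\in W : \varphi_w(V)\subseteq X\}$, equivalently $Y=\{w : w\otimes V\subseteq X\}$, an $R$-subspace of $W$; clearly $Y\otimes_R V\subseteq X$, and for the reverse inclusion I would argue that $X$, being a sub of an isotypical type-$V$ module, is a direct sum of copies of $V$, each copy being the image of some $\varphi\in\Hom_A(V,X)\subseteq\Hom_A(V,W\otimes_R V)$, hence of the form $\varphi_w(V)=w\otimes V$ with $w\in Y$. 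Summing over a generating set of such copies gives $X\subseteq Y\otimes_R V$. I expect the care needed in handling the (possibly infinite-rank) case — passing from a single copy of $V$ to all of $X$ — to be the only delicate point, but it is routine once one knows $X$ is generated by its $V$-isotypic images; alternatively one can reduce to $W$ finite-dimensional since any element of $X$ lies in $W_0\otimes_R V$ for a finite-dimensional $W_0\subseteq W$, and descend.

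Finally, for the statement about $b_W\otimes b_V$: if $Y$ is stable under $b_W$ then $(b_W\otimes b_V)(Y\otimes_R V)\subseteq (b_W Y)\otimes_R V\subseteq Y\otimes_R V$, which is immediate. For the converse, assume $b_V\neq 0$ and $Y\otimes_R V$ is stable under $b_W\otimes b_V$; pick $v_0\in V$ with $b_V v_0\neq 0$. For $y\in Y$ we get $(b_W\otimes b_V)(y\otimes v_0)=(b_W y)\otimes (b_V v_0)\in Y\otimes_R V$; writing $b_V v_0=\sum_j c_j e_j$ in a basis $(e_j)$ of $V$ extending $b_V v_0$ (or more simply completing $b_V v_0$ to a basis), the membership $(b_W y)\otimes(b_V v_0)\in Y\otimes_R V$ forces $b_W y\in Y$ by comparing the coefficient of $b_V v_0$, since tensoring the line $R\cdot(b_Vv_0)$ into $W\otimes_R V$ identifies $\{w\otimes(b_Vv_0):w\in W\}\cap(Y\otimes_R V)$ with $Y\otimes(b_Vv_0)$. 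Thus $b_W Y\subseteq Y$. None of these steps carries a genuine obstacle; the whole theorem is a careful unwinding of the flatness of $-\otimes_R V$ over a field together with $\End_A(V)=R$, and the only place demanding attention is the surjectivity of the lattice map when $W$ is infinite-dimensional.
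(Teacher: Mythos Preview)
Your proposal is correct and follows essentially the paper's approach. The paper dispatches the first three assertions by citing Bourbaki (\S4 n$^\circ$4, Prop.~3 and Thm.~2), while you give the direct argument, which is the content behind that citation; for the converse stability statement, the paper applies an $R$-linear form $\lambda$ on $V$ with $\lambda(b_V v)\neq 0$ to conclude $\lambda(b_V v)\, b_W y\in Y$, which is exactly your basis-projection argument phrased coordinate-free.
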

In our applications, we have an $R$-algebra $A'$ containing $A$ and an $A'$-module $V$ which is simple with commutant $R$ as an $A$-module. We also have a basis $B'$ of $A'$ containing a basis $B$ of $A$ and elements of $B'\setminus B$ act invertibly on $V$. Moreover, we deal with $A'$-modules $W$ where elements of $B$ act as identity, and such that the tensor product action of $B'$ on $W\otimes_R V$ yields an $A'$-module.

\begin{moncor}\label{cor:latticeA2} In the above situation,

The map $Y\mapsto Y\otimes_R V:\mathcal L_W\to \mathcal L_{W\otimes_R V}$ yields a lattice isomorphisms between $A'$-submodules of $W$  and $A'$-submodules
of $W\otimes_R V$.

The map $ W\to  \Hom_A(V,W\otimes_R V)$ is an isomorphism of $A'$-modules, if we let $b\in B'$ act on $\varphi\in \Hom_A(V, W \otimes_R V)$ by $b\varphi= b_{W \otimes_R V}\circ \varphi \circ b_V^{-1}$.
\end{moncor}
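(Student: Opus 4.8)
The plan is to derive Corollary \ref{cor:latticeA2} directly from Theorem \ref{thm:latticeA2}, the whole content being to keep track of how $A'$, and not merely $A$, acts. The first observation I would make is that, since each $b\in B$ acts as $\id_W$ on $W$, it acts as $\id_W\otimes b_V$ on $W\otimes_R V$; hence the restriction to $A$ of the $A'$-module $W\otimes_R V$ is exactly the $A$-module $W\otimes_R V$ of Theorem \ref{thm:latticeA2}, with $A$ acting through the second factor. In particular all conclusions of that theorem are available for $W\otimes_R V$ viewed as an $A$-module.

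For the lattice statement, I would argue as follows. Any $A'$-submodule of $W\otimes_R V$ is a fortiori an $A$-submodule, hence by Theorem \ref{thm:latticeA2} of the form $Y\otimes_R V$ for a unique $R$-subspace $Y\subseteq W$; so it remains only to decide when such a $Y\otimes_R V$ is $A'$-stable. Since the elements of $B$ already act, on either side, so that every $Y$, resp.\ every $Y\otimes_R V$, is stable, $A'$-stability reduces to stability under $b_W$, resp.\ $b_W\otimes b_V$, for all $b\in B'\setminus B$; and the last assertion of Theorem \ref{thm:latticeA2}, using that each such $b_V$ is invertible hence nonzero, says precisely that $Y$ is $b_W$-stable if and only if $Y\otimes_R V$ is $(b_W\otimes b_V)$-stable. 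Thus $Y\mapsto Y\otimes_R V$ restricts to a bijection between the $A'$-submodules of $W$ and those of $W\otimes_R V$; as it is already a lattice isomorphism on all $R$-subspaces of $W$ and the $A'$-submodules form sublattices on both sides (being closed under sums and intersections), this restriction is a lattice isomorphism.

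For the $\Hom$ statement, Theorem \ref{thm:latticeA2} already gives that $w\mapsto\varphi_w$, with $\varphi_w(v)=w\otimes v$, is an $R$-linear isomorphism $W\xrightarrow{\sim}\Hom_A(V,W\otimes_R V)$. I would transport the $A'$-module structure of $W$ across this isomorphism; then $b\in B'$ acts on the target by $\varphi_w\mapsto\varphi_{b_W(w)}$, and it suffices to identify this with the displayed operator. For $b\in B'\setminus B$ this is the computation, for $v\in V$,
$$(b_{W\otimes_R V}\circ\varphi_w\circ b_V^{-1})(v)=b_{W\otimes_R V}(w\otimes b_V^{-1}v)=b_W(w)\otimes b_V b_V^{-1}v=\varphi_{b_W(w)}(v);$$
for $b\in B$ one has $b_W=\id_W$, so the transported action of $b$ is the identity, consistent with the displayed formula whenever the latter is meaningful. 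Hence $w\mapsto\varphi_w$ intertwines the two $A'$-actions, which simultaneously shows that the displayed prescription does define an $A'$-module structure on $\Hom_A(V,W\otimes_R V)$ and that the map is an $A'$-module isomorphism.

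I do not expect a genuine obstacle: the corollary is a bookkeeping layer on top of Theorem \ref{thm:latticeA2}. The only point to be slightly careful about is the last one above --- that the prescription $b\varphi=b_{W\otimes_R V}\circ\varphi\circ b_V^{-1}$ is initially given only on the basis $B'$, and that it does assemble into an honest $A'$-module structure (in particular sends $A$-linear maps to $A$-linear maps); this is not to be checked by a direct computation with relations in $A'$, but follows for free from transport of structure along the $R$-isomorphism $w\mapsto\varphi_w$.
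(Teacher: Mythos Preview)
Your proof is correct and follows essentially the same approach as the paper: apply Theorem \ref{thm:latticeA2} and then check $A'$-stability basis element by basis element, using that $b_W=\id_W$ for $b\in B$ and $b_V\neq 0$ (in fact invertible) for $b\in B'\setminus B$. The paper's proof is extremely terse (three sentences), and you have simply written out the details it leaves implicit; your remark that the $A'$-module structure on $\Hom_A(V,W\otimes_R V)$ is best obtained by transport of structure along $w\mapsto\varphi_w$, rather than by verifying relations directly, is a clean way to handle the point and also neatly sidesteps the issue that $b_V^{-1}$ may not make sense for $b\in B$.
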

Note that the natural map  $\Hom_A(V, W \otimes_R V)\otimes_RV \to W\otimes_R V$ is also an isomorphism of $A'$-modules if we let $b\in B'$ act by the tensor product action.

 \subsection{}\label{s:13} In  \S \ref{S:3}, for a field $R$ of characteristic $p$, we prove the classification of  the irreducible admissible $R$-representations of $G$ in terms of supersingular irreducible admissible $R$-representations of Levi subgroups of $G$.

An $R$-triple $(P,\sigma,Q)$ of $G$  consists of a parabolic subgroup $P=MN$ of $G$, a smooth  $R$-representation $\sigma$ of $M$, and  a parabolic subgroup  $Q $ of $G$ satisfying $P\subset Q \subset P(\sigma)$, where $P(\sigma)=M(\sigma)N(\sigma) $ is the maximal parabolic subgroup of $G$ where  $\sigma$,   extends trivially on $N$; let  $e_Q(\sigma)$ denote the restriction to $M_Q$  of this extension. By definition   
\begin{align}\label{eq:IG}I_G(P,\sigma,Q)&=\Ind_{P(\sigma)}^G (\St_Q^{M(\sigma)}(\sigma)) \quad \text{where}\\
 \label{eq:IGst}\St_Q^{M(\sigma)}(\sigma)&= \Ind_Q^{M(\sigma)}(e_Q(\sigma) )/\sum_{Q\subsetneq Q'\subset P(\sigma)}\Ind_{Q'}^{M(\sigma)} (e_{Q'}(\sigma)) , 
\end{align}
is the  generalized Steinberg $R$-representation   of $ M(\sigma) $ and $\Ind_Q^{M(\sigma)}$ stands for  the parabolic smooth induction $\Ind_{Q\cap M(\sigma)}^{M(\sigma)}$.  In \S\ref{S:II.1} Prop.\ref{prop:es2}, we show that  $I_G(P,-,Q)$  and scalar extension are compatible: for any $R$-triple $(P,\sigma,Q)$ of $G$, we have $R'\otimes_RI_G(P,\sigma,Q)\simeq I_G(P,R'\otimes_R\sigma,Q)$ for any extension $R'/R$ and $I_G(P,\sigma,Q)$ descends to a subfield of $R$ if and only if $\sigma$ does,

What  supersingular means for  an irreducible  smooth $R$-representation $\pi$ of $G$ ?   We know what it means to be a  supersingular $H(G)_R=R\otimes_{\mathbb Z} H(G)$-module:  for all $P\neq G$, a
  certain central element $T_P$ of  the pro-$p$ Iwahori Hecke ring $H(G)$ should  act locally nilpotently \cite{VigpIwss}.  
   We say that   $\pi$  is   {\bf supersingular} if  $\pi^I$ (the $I$-invariants)  is  supersingular as  a  right $H(G)_R$-module (Definition \ref{def:ss}  in \S\ref{S:ss}). 
 When $R$ is algebraically closed, the  definition given in  \cite{AHHV}  is  equivalent  by \cite{OV}. In  \S\ref{S:ss} Lemma \ref{lem:ss}, we show that   supersingularity is compatible with scalar extension.

\begin{montheo}\label{thm:classG} {\rm [Classification theorem for $G$]}

For any $R$-triple $(P,\sigma,Q)$ of $G$ with $\sigma$ irreducible admissible supersingular, $I_G(P,\sigma,Q)$ is an irreducible admissible $R$-representation  of $G$.

If $(P,\sigma,Q)$ and $(P_1,\sigma_1,Q_1)$ are   two $R$-triples of $G$ with $\sigma$ and 
$\sigma_1$ irreducible admissible supersingular and  $I_G(P,\sigma,Q)\simeq I_G(P_1,\sigma_1,Q_1)$, then $P=P_1,Q=Q_1$ and $\sigma\simeq \sigma_1$. 

For any irreducible admissible $R$-representation $\pi$  of $G$,  there is a $R$-triple $(P,\sigma,Q)$ of $G$ with $\sigma$ irreducible admissible supersingular,  such that $\pi\simeq I_G(P,\sigma,Q)$. 
\end{montheo}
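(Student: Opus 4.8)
The strategy is to reduce everything to the algebraically closed case, which is \cite{AHHV}, using the scalar-extension machinery of \S\ref{S:2}. Fix an algebraic closure $R^{alg}/R$ and write $(-)_{R^{alg}}$ for scalar extension.

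**Irreducibility of $I_G(P,\sigma,Q)$.** Start with an $R$-triple $(P,\sigma,Q)$, $\sigma$ irreducible admissible supersingular. By Thm.~\ref{thm:DA} applied to the Hecke module $\sigma^{I\cap M}$ (whose commutant is finite-dimensional over $R$ since $\sigma$ is admissible irreducible), the representation $\sigma_{R^{alg}}$ has finite length with all irreducible subquotients conjugate under $\Aut_R(R^{alg})$, each admissible; by Lemma~\ref{lem:ss} each is supersingular, and $P(\tau)=P(\sigma)$ for each such $\tau$ because $M(\sigma)$ is detected on the level of $\sigma^{I\cap M}$ (equivalently, $P(\sigma)$ is insensitive to scalar extension — this should be checked, or cited from \S\ref{S:II.1}). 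Now $I_G(P,-,Q)$ commutes with scalar extension by Prop.~\ref{prop:es2}, so $I_G(P,\sigma,Q)_{R^{alg}} \cong I_G(P,\sigma_{R^{alg}},Q)$, which by exactness of $I_G(P,-,Q)$ (it is built from parabolic induction and a cokernel, all exact here) and by the algebraically closed case of the theorem is a finite direct sum of the $\Aut_R(R^{alg})$-conjugate irreducibles $I_G(P,\tau,Q)$. A representation whose scalar extension to $R^{alg}$ is a sum of $\Galois$-conjugate irreducibles is itself irreducible (this is the ``descent of irreducibility'' direction packaged in Thm.~\ref{thm:DA} via the bijection, applied to $I_G(P,\sigma,Q)^I$). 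Admissibility of $I_G(P,\sigma,Q)$ follows from admissibility of $\sigma$ and finiteness of $G/P$, or again by faithfully flat descent of admissibility along $R^{alg}/R$.

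**Rigidity (the triple is determined).** Suppose $I_G(P,\sigma,Q)\cong I_G(P_1,\sigma_1,Q_1)$ over $R$. Extend scalars to $R^{alg}$: the two sides become sums of $\Aut_R(R^{alg})$-conjugates of $I_G(P,\tau,Q)$ and of $I_G(P_1,\tau_1,Q_1)$ respectively. Comparing irreducible constituents and invoking the injectivity part of the algebraically closed classification, there is $g\in \Aut_R(R^{alg})$ with $P=P_1$, $Q=Q_1$, and $\tau_1\cong g\cdot\tau$ among the conjugates. Hence $\sigma_{R^{alg}}$ and $(\sigma_1)_{R^{alg}}$ have a common irreducible constituent; by the bijection in Thm.~\ref{thm:DA} (two simple $A$-modules over $R$ with a common constituent after scalar extension are isomorphic), $\sigma\cong\sigma_1$.

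**Exhaustivity.** Let $\pi$ be irreducible admissible over $R$. Then $\pi_{R^{alg}}$ has finite length with $\Aut_R(R^{alg})$-conjugate irreducible admissible constituents (Thm.~\ref{thm:DA} applied to $\pi^I$, plus Cor.~\ref{cor:DA}). Pick one constituent $\pi'$; by \cite{AHHV} there is an $R^{alg}$-triple $(P,\sigma',Q)$ with $\sigma'$ irreducible admissible supersingular and $\pi'\cong I_G(P,\sigma',Q)$. The conjugates $g\cdot\pi'$ are then $I_G(P,g\cdot\sigma',Q)$ (the parabolics $P,Q$ are fixed since they are combinatorial data, and $\Aut_R(R^{alg})$ acts on the supersingular parameter), so the $\Aut_R(R^{alg})$-orbit of $\sigma'$ is finite; by the bijection of Thm.~\ref{thm:DA} this orbit descends to an irreducible admissible $R$-representation $\sigma$ of $M$ with $\sigma_{R^{alg}}$ having the $g\cdot\sigma'$ as constituents, and $\sigma$ is supersingular by Lemma~\ref{lem:ss}. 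Finally $I_G(P,\sigma,Q)_{R^{alg}}\cong I_G(P,\sigma_{R^{alg}},Q)$ has the same irreducible constituents as $\pi_{R^{alg}}$, so by the descent bijection $I_G(P,\sigma,Q)\cong\pi$.

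**Main obstacle.** The delicate point is verifying that the combinatorial data $P(\sigma)$ — equivalently $M(\sigma)$, where $\sigma$ extends trivially to $N$ — is unchanged by scalar extension, so that $(P,\sigma,Q)$ really does give rise to $\Aut_R(R^{alg})$-triples with the \emph{same} $P,Q$ and only $\sigma$ varying in an orbit; and, dually, that supersingularity descends/ascends cleanly, which is Lemma~\ref{lem:ss}, but whose use here needs the constituents of $\sigma_{R^{alg}}$ to be genuinely supersingular and not merely to have supersingular support. Both are presumably dispatched in \S\ref{S:II.1} and \S\ref{S:ss}; modulo those, the argument is a formal unwinding of Thm.~\ref{thm:DA}, Prop.~\ref{prop:es2}, and \cite{AHHV}.
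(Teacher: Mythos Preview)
Your overall scheme is the paper's, and the rigidity step is fine. The irreducibility step, however, has a real gap.

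You claim that $I_G(P,\sigma_{R^{alg}},Q)$ is ``a finite direct sum of the $\Aut_R(R^{alg})$-conjugate irreducibles $I_G(P,\tau,Q)$'' and then invoke a principle ``a representation whose scalar extension is a sum of Galois-conjugate irreducibles is itself irreducible''. Neither statement is correct in general. By Thm.~\ref{thm:DA}, $\sigma_{R^{alg}}\simeq \oplus^{\delta}\oplus_j V_{R^{alg},j}$ where the $V_{R^{alg},j}$ are \emph{indecomposable of length $[E:E_{\mathrm{sep}}]$}, and each appears with multiplicity $\delta$ (here $D=\End_{R[M]}\sigma$, $E$ its center). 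In characteristic $p$ one can have $\delta>1$ or $E\neq E_{\mathrm{sep}}$, so $\sigma_{R^{alg}}$ need be neither semisimple nor multiplicity free, and the same goes for $I_G(P,\sigma_{R^{alg}},Q)$. Your stated descent criterion is also false even in the multiplicity-free case: $W=\sigma\oplus\sigma$ has the same Galois-orbit of simple subquotients after scalar extension as $\sigma$ does, yet is not irreducible. The bijection in Thm.~\ref{thm:DA} is a statement about \emph{simple} $A$-modules; it does not give a test for simplicity.

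What the paper uses instead is the lattice isomorphism $\mathcal L_{\sigma_{R^{alg}}}\xrightarrow{\sim}\mathcal L_{I_G(P,\sigma_{R^{alg}},Q)}$ of Prop.~\ref{prop:tensorstG} (applicable by Rem.~\ref{rem:tensorstG}). Pick an irreducible $R$-subrepresentation $\pi\subset I_G(P,\sigma,Q)$; then $\pi_{R^{alg}}\subset I_G(P,\sigma_{R^{alg}},Q)$ equals $I_G(P,\rho,Q)$ for a unique subrepresentation $\rho\subset\sigma_{R^{alg}}$. Since $I_G(P,\rho,Q)=\pi_{R^{alg}}$ descends to $R$, the \emph{strong} compatibility of Prop.~\ref{prop:es2}(ii) forces $\rho$ itself to descend to $R$. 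But the only subrepresentations of $\sigma_{R^{alg}}$ that descend to $R$ are $0$ and $\sigma_{R^{alg}}$ (any such descent is $\sigma$-isotypic over $R$, hence of the form $(\oplus^n\sigma)_{R^{alg}}$, and an embedding into $\sigma_{R^{alg}}$ forces $n\le 1$). Hence $\pi_{R^{alg}}=I_G(P,\sigma_{R^{alg}},Q)$ and $\pi=I_G(P,\sigma,Q)$. This is exactly where Thm.~\ref{thm:latticeA2} (via Prop.~\ref{prop:tensorstG}) earns its keep; your argument bypasses it and cannot be repaired without it.

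A smaller gap in the exhaustivity step: before you can invoke the bijection of Thm.~\ref{thm:DA} to descend the orbit of $\sigma'$ to an $R$-representation $\sigma$ of $M$, you must know that $\sigma'$ descends to a \emph{finite} extension of $R$. This is not automatic; the paper gets it from the fact that the chosen irreducible constituent $\pi^{alg}$ of $\pi_{R^{alg}}$ descends to a finite extension (Thm.~\ref{thm:DG}), and then Prop.~\ref{prop:es2}(ii) transfers this to $\sigma^{alg}$. Once irreducibility is fixed as above, your exhaustivity argument goes through with this addition.
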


When  $R$ is algebraically closed, this is the   classification theorem  of \cite{AHHV}. In   \S\ref{S:3.4} we  descend the classification theorem   from $R^{alg}$ to $R$ by a formal proof using  the decomposition theorem (Thm.\ref{thm:DA}) and a lattice isomorphism $\mathcal L_{\sigma_{R^{alg}}}\simeq \mathcal L_{I_G(P,\sigma_{R^{alg}},Q)}$ when $\sigma$ is irreducible admissible supersingular of scalar extension $\sigma_{R^{alg}}$ to  $R^{alg}$ (Prop.\ref{prop:tensorstG} in \S\ref{S:II.1}, Remark \ref{rem:tensorstG} in \S\ref{S:ss}).

\subsection{}\label{I.6}  
In  \S \ref{S:4}, we prove a similar classification for   the simple right $H(G)_R$-modules when $R$ is a field of characteristic $p$. As in \cite{AHenV2} when $R$ is algebraically closed, this classification uses the parabolic induction functor 
$$\Ind_P^{H(G)}:\Mod_R(H(M))\to \Mod_R(H(G))$$ from right $H(M)_R$-modules to right $H(G)_R$-modules, analogue  of the parabolic smooth induction:  indeed $(\Ind_P^G \sigma)^I $ is naturally isomorphic to $  \Ind_{P}^{H(G)}(\sigma^{I\cap M})$ for a smooth $R$-representation $\sigma$ of $G$  \cite{OV}. An $R$-triple  $(P,\cV,Q)$ of $H(G)$ consists of parabolic subgroups $P=MN\subset Q$ of $G$ (containing $B$) and of a right $H(M)_R$-module $\cV$ with $Q\subset P(\cV)$ (Definition \ref{def:triH}); as for the group, it defines a right $H(G)_R$-module $I_{H(G)}(P,\cV,Q)$.

\begin{montheo}\label{thm:classHG} {\rm [Classification theorem for $H(G)$]}

Any simple right $H(G)_R$-module $\cx$ is isomorphic to $I_{H(G)}(P,\cV,Q)$ for some $R$-triple $(P,\cV,Q)$ of $H(G)$ with  $\cV$ simple supersingular.

For any $R$-triple $(P,\cV,Q)$ of  $H(G)$  with $\cV$ simple supersingular, $I_{H(G)}(P,\cV,Q)$ is  a  simple $H(G)_R$-module.

If  $(P,\cV,Q)$ and $(P_1,\cV_1,Q_1)$ are  $R$-triples of $H(G)$ with $\cV$ and 
$\cV_1$ simple supersingular,  and $I_{H(G)}(P,\cV,Q)\simeq I_{H(G)}(P_1,\cV_1,Q_1)$, then $P=P_1,Q=Q_1$ and $\cV\simeq \cV_1$. 
\end{montheo}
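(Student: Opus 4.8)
The plan is to reduce the classification theorem for $H(G)$ over an arbitrary field $R$ of characteristic $p$ to the already-known case where $R$ is algebraically closed (the main result of \cite{Abe}, or \cite[Cor.~4.30]{AHenV2}), using exactly the same package of tools that yields Theorem~\ref{thm:classG}: the decomposition theorem (Thm.~\ref{thm:DA}), the lattice isomorphism criterion (Thm.~\ref{thm:latticeA}), and the tensor-product lattice isomorphism (Thm.~\ref{thm:latticeA2} and Cor.~\ref{cor:latticeA2}). Write $R^{alg}$ for an algebraic closure of $R$. The three ingredients one needs on the Hecke side, parallel to the group side, are: (1) parabolic induction $\Ind_P^{H(G)}$ commutes with scalar extension, i.e. $(I_{H(G)}(P,\cV,Q))_{R^{alg}}\simeq I_{H(G)}(P,\cV_{R^{alg}},Q)$, and $I_{H(G)}(P,\cV,Q)$ descends to a subfield exactly when $\cV$ does; (2) supersingularity of $H(M)_R$-modules is stable under scalar extension (the Hecke analogue of Lemma~\ref{lem:ss}, via the central elements $T_{P'}$ of \cite{VigpIwss}); (3) for $\cV$ simple supersingular with commutant $R$, there is a lattice isomorphism $\mathcal L_{\cV_{R^{alg}}}\simeq \mathcal L_{I_{H(G)}(P,\cV_{R^{alg}},Q)}$, so that $I_{H(G)}(P,-,Q)$ preserves submodule lattices. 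Ingredient (3) is precisely where Cor.~\ref{cor:latticeA2} applies: $\Ind_P^{H(G)}$ is, after restriction of the ambient Hecke ring, a functor of the shape $-\otimes_R \cV_0$ for a suitable simple module $\cV_0$ with commutant $R$, the elements of $B'\setminus B$ acting invertibly, exactly the situation set up in the paragraph preceding Cor.~\ref{cor:latticeA2}.

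First I would establish (1): unwind the definitions \eqref{eq:IG}--\eqref{eq:IGst} transcribed to the Hecke setting (Definition~\ref{def:triH}), and check that $\Ind_Q^{H(M(\cV))}$ and the quotient defining the generalized Steinberg $H$-module are exact and commute with $R^{alg}\otimes_R -$ because $R^{alg}$ is flat (in fact free) over $R$ and these are finite-dimensional constructions; this is the Hecke-module shadow of Prop.~\ref{prop:es2}, and should be essentially formal. Second, (2): the defining condition of supersingularity is that each central $T_{P'}$ ($P'\neq G$) acts locally nilpotently on $\cV$; local nilpotence of a fixed operator is obviously preserved and reflected under $-\otimes_R R^{alg}$ on a module, and $R^{alg}\otimes_R -$ of the central elements are the corresponding central elements of $H(G)_{R^{alg}}$, so this is immediate. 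Third, (3): combine (1) with Cor.~\ref{cor:latticeA2}. One gets that $I_{H(G)}(P,\cV_{R^{alg}},Q)$ is a sum of simple modules indexed by $\mathcal L$-atoms matching those of $\cV_{R^{alg}}$, hence $\cV_{R^{alg}}$ simple $\Rightarrow I_{H(G)}(P,\cV_{R^{alg}},Q)$ simple, and more generally the submodule lattices correspond; in particular the Galois action $\Aut_R(R^{alg})$ on the isotypic constituents of $\cV_{R^{alg}}$ matches that on the constituents of $I_{H(G)}(P,\cV_{R^{alg}},Q)$.

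With these in hand the descent is formal. For the existence statement: given a simple $H(G)_R$-module $\cx$, its commutant $D$ is finite-dimensional over $R$ (Cor.~\ref{cor:DA}, or directly because $\cx^I$-type finiteness), so by Thm.~\ref{thm:DA} the constituents of $\cx_{R^{alg}}$ form a single $\Aut_R(R^{alg})$-orbit of simple $H(G)_{R^{alg}}$-modules, each of which by \cite{Abe} is of the form $I_{H(G)}(P,\cW,Q)$ with $\cW$ simple supersingular over $R^{alg}$; the triple $(P,Q)$ is $\Aut_R(R^{alg})$-invariant (it is read off combinatorially, e.g. from the action of the $T_{P'}$), so is defined over $R$, and the $\Aut_R(R^{alg})$-orbit of $\cW$ descends, by Thm.~\ref{thm:DA} applied to $H(M)_R$, to a simple $H(M)_R$-module $\cV$, supersingular by (2); then $I_{H(G)}(P,\cV,Q)_{R^{alg}}\simeq I_{H(G)}(P,\cV_{R^{alg}},Q)$ has the same constituents as $\cx_{R^{alg}}$ by (3), whence $\cx\simeq I_{H(G)}(P,\cV,Q)$ using that a finite-length module over an $R$-algebra is determined up to isomorphism by its scalar extension to $R^{alg}$ together with the descent datum (again Thm.~\ref{thm:DA}). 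For simplicity of $I_{H(G)}(P,\cV,Q)$ when $\cV$ is simple supersingular: the commutant of $\cV$ is finite over $R$, so $\cV_{R^{alg}}$ decomposes per Thm.~\ref{thm:DA} into Galois-conjugate simples, each supersingular by (2), each inducing to a simple $H(G)_{R^{alg}}$-module by the split case plus (3); thus $I_{H(G)}(P,\cV,Q)_{R^{alg}}$ is semisimple with a single Galois orbit of constituents, which by Thm.~\ref{thm:DA} (the characterization of descent) forces $I_{H(G)}(P,\cV,Q)$ simple. For uniqueness: from $I_{H(G)}(P,\cV,Q)\simeq I_{H(G)}(P_1,\cV_1,Q_1)$ extend scalars to $R^{alg}$, compare constituents, apply the injectivity of $(P,\sigma,Q)\mapsto I$ in the split case of \cite{Abe} to get $P=P_1$, $Q=Q_1$ and $\cV_{R^{alg}}$, $(\cV_1)_{R^{alg}}$ with the same constituents, hence the same $\Aut_R(R^{alg})$-orbit, hence $\cV\simeq\cV_1$ by the bijectivity clause of Thm.~\ref{thm:DA}.

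The main obstacle I anticipate is ingredient (3) — more precisely, verifying that the parabolic induction functor $\Ind_P^{H(G)}$, restricted to the $H(M)$-modules relevant here, genuinely fits the template of Cor.~\ref{cor:latticeA2}: one must exhibit the ambient ring $A'\supset A$, the compatible bases $B'\supset B$, the simple module $V$ with commutant $R$ on which $B'\setminus B$ acts invertibly, and check that the $B'$-action on $W\otimes_R V$ really is the parabolic induction of the $H(M)$-module $W$. This is the Hecke-theoretic counterpart of the somewhat delicate Prop.~\ref{prop:tensorstG}, and it is where the structure theory of the pro-$p$ Iwahori Hecke ring (Bernstein-type presentation, the elements $T_{P'}$ and their invertibility on the relevant twists) does real work; everything after that is bookkeeping with Thm.~\ref{thm:DA}. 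A secondary, more routine, point to be careful about is that all modules in sight have finite length and finite-dimensional commutants over the appropriate field, so that Thm.~\ref{thm:DA} and Cor.~\ref{cor:DA} apply — this follows from admissibility/finite-dimensionality of $\cx^I$ but should be stated cleanly.
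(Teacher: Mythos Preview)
Your overall strategy is exactly the paper's: reduce to the algebraically closed case via (1) compatibility of $I_{H(G)}(P,-,Q)$ with scalar extension (this is Prop.~\ref{prop:scalarLH}), (2) stability of supersingularity under scalar extension (Lemma~\ref{lem:ss}), (3) the lattice isomorphism $\mathcal L_{\cV_{R^{alg}}}\simeq \mathcal L_{I_{H(G)}(P,\cV_{R^{alg}},Q)}$ (Prop.~\ref{prop:tensorstH} and Remark~\ref{rem:tensorstHG}), and then the decomposition theorem. Your identification of (3) as the substantive step, and its proof via Cor.~\ref{cor:latticeA2} together with Thm.~\ref{thm:latticeA} applied to $\Ind_{P(\cV)}^{H(G)}$, is correct.

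There is, however, a genuine gap in your simplicity argument. You write that ``$\cV_{R^{alg}}$ decomposes per Thm.~\ref{thm:DA} into Galois-conjugate simples'' and hence $I_{H(G)}(P,\cV,Q)_{R^{alg}}$ is semisimple. This is false in general: Thm.~\ref{thm:DA} says $\cV_{R^{alg}}$ is a direct sum of \emph{indecomposables} $V_{R^{alg},j}$, each of length $[E:E_{sep}]$, where $E$ is the center of the commutant of $\cV$. When $E/R$ has a nontrivial purely inseparable part (which can certainly happen in characteristic $p$ with $R$ imperfect), $\cV_{R^{alg}}$ is not semisimple, and neither is $I_{H(G)}(P,\cV,Q)_{R^{alg}}$. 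Even granting semisimplicity, ``single Galois orbit of constituents'' does not by itself force simplicity of the descended module: $\cx\oplus\cx$ has the same orbit of constituents as $\cx$.

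The paper's argument avoids this. One picks a simple $H(G)_R$-submodule $\cx\subset I_{H(G)}(P,\cV,Q)$; then $\cx_{R^{alg}}$ is a submodule of $I_{H(G)}(P,\cV_{R^{alg}},Q)$, so by the lattice isomorphism (3) there is a submodule $\cV'\subset \cV_{R^{alg}}$ with $\cx_{R^{alg}}\simeq I_{H(G)}(P,\cV',Q)$. Since $\cx_{R^{alg}}$ descends to $R$, the strong compatibility of $I_{H(G)}(P,-,Q)$ with scalar extension (Prop.~\ref{prop:scalarLH}(iii)) forces $\cV'$ to descend to $R$. But by the decomposition theorem no \emph{proper} submodule of $\cV_{R^{alg}}$ descends to $R$ (the $\Aut_R(R^{alg})$-action permutes the indecomposable summands $V_{R^{alg},j}$ transitively), so $\cV'=\cV_{R^{alg}}$ and $\cx=I_{H(G)}(P,\cV,Q)$. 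This is the step where the ``strong'' compatibility (descent of $\cV$ follows from descent of $I_{H(G)}(P,\cV,Q)$, not just the other implication) is actually used, and your sketch of (1) should make sure to include it.
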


    The proof   follows the same pattern as for the group $G$, by a descent   to $R$  of the classification over $R^{alg}$  \cite{AHenV2}. 
    
 In Proposition   \ref{prop:scalarLH}, we prove  that $I_{H(G)}(P,-,Q)$ and  scalar extension are compatible, as in the group case (Prop. \ref{prop:es2}).

    Assuming that $R$ contains a root of unity of order the exponent of $Z_k$ (the quotient of the parahoric subgroup of $Z$ by its pro-$p$ Sylow subgroup), the simple supersingular $H(G)_R$-modules are classified \cite{Oss}, \cite[Thm.1.6]{VigpIwss}; in particular  when $G$ is semisimple and simply connected, they have dimension $1$.  With Theorem \ref{thm:classHG}, we get a complete classification of the $H(G)_R$-modules.

Note that the ring $H(M)$ does not embed in the ring $H(G)$  and different inductions  from $\Mod_R(H(M))$ to $\Mod_R(H(G))$ are possible. We denote  $CI_P^{H(G)} :\Mod_R(H(M))\to \Mod_R(H(G))$   the parabolic coinduction functor and  $CI_{H(G)}(P,\cV,Q)$ the corresponding $H(G)_R$-module associated to an $R$-triple $(P,\cV,Q)$ of $H(G)$, used  in \cite{Abe}.  The classification theorem (Thm. \ref{thm:classHG}) can be  equivalently expressed   with $CI_{H(G)}(P,\cV,Q)$ instead of $I_{H(G)}(P,\cV,Q)$, as in the case where $R$ is algebraically closed \cite[Cor. 4.24]{AHenV2}. In the appendix we recall  results of Abe on the different inductions $\Mod_R(H(M)) \to \Mod_R(H(G))$ and their relations.

\subsection{}\label{I.9}  In \S\ref{S:5},   we give   applications (Theorems \ref{thm:vsd}, \ref{thm:lattice}, \ref{thm:adjoint}, \ref{thm:sssc}) of   the classification for $G$  (Thm \ref{thm:classG}) and for $H(G)$  (Thm \ref{thm:classHG}); they  were  already known when $R$ is algebraically closed, except for parts  (ii),(iii) of Theorem \ref{thm:lattice} below.  
 
\begin{montheo} {\rm [Vanishing of the smooth dual]} \label{thm:vsd}The smooth dual of an infinite dimensional irreducible admissible $R$-representation of $G$ is $0$.
\end{montheo}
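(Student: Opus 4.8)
The plan is to deduce the statement from the case $R=R^{alg}$, where it is known, by scalar extension to an algebraic closure, using the Decomposition Theorem (Thm.~\ref{thm:DA}) to control the structure of $\pi_{R^{alg}}:=R^{alg}\otimes_R\pi$. The first step is a trivial reduction. Writing $\pi^\vee=(\pi^*)^{\mathrm{sm}}$ for the smooth dual, if $\pi^\vee\neq 0$ then one may choose a nonzero smooth linear form $\lambda\colon\pi\to R$ fixed by some compact open subgroup $K$; then $\lambda\otimes\id_{R^{alg}}$ is a nonzero $R^{alg}$-linear form on $\pi_{R^{alg}}$ still fixed by $K$, so $(\pi_{R^{alg}})^\vee\neq 0$. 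Hence it suffices to prove $(\pi_{R^{alg}})^\vee=0$.

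Next I would pin down the structure of $\pi_{R^{alg}}$. Since $\pi$ is irreducible admissible, its commutant $\End_G(\pi)$ injects into $\End_R(\pi^{K'})$ for any compact open $K'$ with $\pi^{K'}\neq 0$, hence is finite dimensional over $R$, so Thm.~\ref{thm:DA} and Cor.~\ref{cor:DA} apply: $\pi_{R^{alg}}$ has finite length and its simple subquotients form a single $\Aut_R(R^{alg})$-orbit. As scalar extension commutes with $K'$-invariants ($R^{alg}$ is free over $R$, and $G$ acts $R$-linearly on the $\pi$-factor), $(\pi_{R^{alg}})^{K'}=(\pi^{K'})_{R^{alg}}$ is finite dimensional, so $\pi_{R^{alg}}$ is admissible; its socle is then a nonzero admissible semisimple subrepresentation, whose simple constituents are irreducible admissible. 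Every simple subquotient of $\pi_{R^{alg}}$ is $\Aut_R(R^{alg})$-conjugate to one of these, and conjugation by a field automorphism preserves admissibility and dimension, so all simple subquotients of $\pi_{R^{alg}}$ are irreducible admissible of one common dimension $d$ over $R^{alg}$; since the length is finite while $d$ times the length is at least $\dim_{R^{alg}}\pi_{R^{alg}}=\dim_R\pi=\infty$, we get $d=\infty$, i.e.\ all of them are infinite dimensional.

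The last step is a d\'evissage. The smooth dual $W\mapsto W^\vee$ is the composite of the exact linear-dual functor with the left exact functor of smooth vectors, hence is contravariant and left exact. By the theorem in the algebraically closed case, each (infinite dimensional, irreducible, admissible) simple subquotient of $\pi_{R^{alg}}$ has zero smooth dual. Argue by induction on the length: for a simple quotient $\pi_{R^{alg}}\twoheadrightarrow V''$ with kernel $V'$, left exactness gives $0\to (V'')^\vee\to(\pi_{R^{alg}})^\vee\to (V')^\vee$ with $(V'')^\vee=0$, so $(\pi_{R^{alg}})^\vee$ embeds into $(V')^\vee$; and $V'$ is an admissible subrepresentation of strictly smaller length with the same kind of simple subquotients, so $(V')^\vee=0$ by the inductive hypothesis. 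Hence $(\pi_{R^{alg}})^\vee=0$, and therefore $\pi^\vee=0$.

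Essentially every step is routine bookkeeping; the one point that genuinely requires the machinery of Section~\ref{S:2} is the claim that no simple subquotient of $\pi_{R^{alg}}$ is finite dimensional. Were one of them finite dimensional, its nonzero smooth dual would survive in $(\pi_{R^{alg}})^\vee$ and the d\'evissage would collapse; it is precisely the $\Aut_R(R^{alg})$-conjugacy of the simple constituents in Thm.~\ref{thm:DA}, forcing them all to share the same dimension, that excludes this.
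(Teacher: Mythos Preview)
Your proof is correct and follows essentially the same approach as the paper's: reduce to $R^{alg}$ via the inclusion $(\pi^\vee)_{R^{alg}}\subset (\pi_{R^{alg}})^\vee$, use the decomposition theorem to see that the simple subquotients of $\pi_{R^{alg}}$ form a single $\Aut_R(R^{alg})$-orbit (hence share a common dimension), and invoke the known result over $R^{alg}$ together with a d\'evissage on the finite-length filtration. The only difference is cosmetic: the paper runs the argument contrapositively (if $\pi^\vee\neq 0$ then some subquotient $\rho$ has $\rho^\vee\neq 0$, hence $\dim\rho<\infty$, hence $\dim\pi<\infty$), while you argue directly and spell out the d\'evissage more explicitly.
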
 

This  was proved by different methods when the characteristic of  $F$ is $0$ in \cite{Kohl} and when $R$ is  algebraically closed in  \cite[Thm.6.4]{AHenV2}.
In \S\ref {S:3.5} we deduce easily the theorem  from the  theorem over $R^{alg}$ using the scalar extension to $R^{alg}$ (Theorem \ref{thm:DA}).
  
\bigskip  [Description of   $\Ind_P^G \sigma$  for an irreducible admissible $R$-representation of $M$, and  of  $\Ind_P^{H(G)} \cV$ for a simple $H(M)_R$-module $\cV$]

We write  
  $\mathcal L_\pi$ for the lattice of subrepresentations of an $R$-representation $\pi$ of $G$,  and  $ \mathcal L_\cx$ for the lattice of submodules of an $H(G)_R$-module $\cx$.

  Recall that for a set $X$, an  upper set  in $\mathcal P(X)$ is a set $\mathcal Q$  of subsets of $X$, such that if $X_1\subset X_2 \subset X$ and $X_1\in \mathcal Q$ then $X_2\in \mathcal Q$.  Write $\mathcal L_{\mathcal P(X), \geq}$ for the lattice of upper sets in $\mathcal P(X)$.  For two subsets $X_1,X_2$ of $X$ write $X_1\setminus X_2$ for the complementary set of $X_1\cap X_2$ in $X_1$.
   
 By the classification theorems,   
$\sigma\simeq I_M(P_1\cap M, \sigma_1, Q\cap M)$  with  $(P_1, \sigma_1,Q)$ an $R$-triple of $G$, $Q\subset P$  and 
  $\sigma_1$ irreducible admissible supersingular  and    $\cV \simeq  I_{H(M)}(P_1\cap M, \cV_1, Q\cap M)$ with $(P_1, \cV_1,Q)$ an $R$-triple of $H(G)$, $Q\subset P$, and 
$ \cV_1$ simple supersingular.

\begin{montheo} \label{thm:lattice}   {\rm [Lattices  $\mathcal L_{\Ind_P^G\sigma}$ and $\mathcal L_{\Ind_P^{H(G)}\cV}$ ]}    
 
 (i)    $\Ind_P^G \sigma$    is multiplicity free of irreducible subquotients (isomorphic to) $I_G(P_1, \sigma_1, Q')$ for  $R$-triples $(P_1, \sigma_1, Q')$ of $G$ with $ Q' \cap   P=Q$ (notations as above).
    
      Sending $I_G(P_1, \sigma_1, Q')$   to $\Delta_{Q'} \cap (\Delta_{P(\sigma_1)}\setminus \Delta_{P})$ gives a lattice isomorphism\footnote{see the discussion in \cite{He} and \cite{AHHV} on the lattice of submodules of a multiplicity free module}
 $$\mathcal L_{ \Ind_P^G \sigma}\to \mathcal L_{ \mathcal P(\Delta_{P(\sigma_1)}\setminus \Delta_{P}), \geq}.$$

 (ii) The $H(G)_R$-module  $\Ind_P^{H(G)}\cV$ satisfies the analogue of (i).
    
(iii)  If
 $ \sigma^{I\cap M}$ is  simple and the natural map  $\sigma^{I\cap M}\otimes_{H(M)}\mathbb Z[(I\cap M)\backslash M]\to \sigma$ is injective, then 
 the  $I$-invariant functor $(-)^I$ gives a  lattice isomorphism 
$\mathcal L_{\Ind_P^G( \sigma)} \to\mathcal L_{\Ind_P^{H(G)} ( \sigma^{I\cap M})}$ with inverse given by  $-\otimes_{H(G)_R}R[I\backslash G]$.
\end{montheo}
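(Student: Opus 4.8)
The plan is to treat parts (i), (ii) and (iii) in sequence, building (i) from the known classification over $R^{alg}$ plus the tools of section \ref{S:2}, then transporting it to (ii) by the group-to-Hecke dictionary, and finally deducing (iii) from the Lattice isomorphism criterion (Thm.\ref{thm:latticeA}).

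For part (i), I would first reduce to the case $R=R^{alg}$. When $\sigma_1$ is irreducible admissible supersingular over $R^{alg}$, transitivity of parabolic induction and the definition \eqref{eq:IG}--\eqref{eq:IGst} give a filtration of $\Ind_P^G\sigma$ whose graded pieces are the $I_G(P_1,\sigma_1,Q')$ with $Q'\cap P=Q$; the combinatorics here is exactly that of the generalized Steinberg modules, and the claim that each such $I_G(P_1,\sigma_1,Q')$ is irreducible and that distinct triples give non-isomorphic pieces is the classification theorem (Thm.\ref{thm:classG}), which also forces multiplicity one. The identification of the lattice $\mathcal L_{\Ind_P^G\sigma}$ with $\mathcal L_{\mathcal P(\Delta_{P(\sigma_1)}\setminus\Delta_P),\geq}$ is then the standard description of the submodule lattice of a multiplicity-free module whose Jordan--Hölder constituents are indexed by the subsets of a fixed finite set with the ``upper set'' incidence relation, as in \cite{He}, \cite{AHHV}: one checks that a constituent $I_G(P_1,\sigma_1,Q')$ lies in the subrepresentation generated by $I_G(P_1,\sigma_1,Q'')$ iff $\Delta_{Q''}\cap(\Delta_{P(\sigma_1)}\setminus\Delta_P)\subset\Delta_{Q'}\cap(\Delta_{P(\sigma_1)}\setminus\Delta_P)$. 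To descend to a general $R$, I would invoke Prop.\ref{prop:es2} (compatibility of $I_G(P,-,Q)$ with scalar extension) together with the Decomposition theorem (Thm.\ref{thm:DA}) applied to the irreducible admissible $\sigma_1$: the lattice of $R$-subrepresentations of $\Ind_P^G\sigma$ is the $\Aut_R(R^{alg})$-fixed part of the corresponding lattice over $R^{alg}$, and the upper-set description is Galois-equivariant, so it passes to $R$.

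Part (ii) is the verbatim Hecke-algebra analogue. Here I would run the same argument inside $\Mod_R(H(G))$, using Prop.\ref{prop:scalarLH} for compatibility of $I_{H(G)}(P,-,Q)$ with scalar extension, the classification theorem for $H(G)$ (Thm.\ref{thm:classHG}) for irreducibility, multiplicity one and rigidity of the constituents, and the analogues of \eqref{eq:IG}--\eqref{eq:IGst} for the Hecke-algebraic generalized Steinberg modules from \cite{AHenV2}. Since the combinatorial skeleton is identical, the same upper-set lattice identification applies; no new idea is needed beyond citing the Hecke-side classification.

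For part (iii), the strategy is to apply Thm.\ref{thm:latticeA} to the functor $F=(-)^I:\Rep_R(G)\to\Mod_R(H(G))$ with right adjoint $G\mapsto -\otimes_{H(G)_R}R[I\backslash G]$, taking $W=\Ind_P^G\sigma$. I would verify the three hypotheses a), b), c) of that theorem. For hypothesis a), the unit $W\to (-\otimes_{H(G)_R}R[I\backslash G])(W^I)$ is an isomorphism because, by \cite{OV}, $(\Ind_P^G\sigma)^I\cong\Ind_P^{H(G)}(\sigma^{I\cap M})$, and the injectivity hypothesis ``$\sigma^{I\cap M}\otimes_{H(M)}\mathbb Z[(I\cap M)\backslash M]\to\sigma$ is injective'' combined with exactness of parabolic induction propagates this isomorphism from $M$ to $G$ — this surjectivity/injectivity bookkeeping across the induction is the step I expect to be the main obstacle, since one must show the natural map is an isomorphism and not merely injective, presumably by a length count using part (i) and (ii) together with the hypothesis that $\sigma^{I\cap M}$ is simple. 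For hypothesis c), each simple subquotient of $W$ is some $I_G(P_1,\sigma_1,Q')$ by part (i), and its $I$-invariants are the simple module $I_{H(G)}(P_1,\sigma_1^{I\cap M},Q')$ by \cite{OV} and Thm.\ref{thm:classHG}; here one also needs $\sigma_1^{I\cap M}$ simple, which follows from the hypothesis on $\sigma^{I\cap M}$ via the classification. Hypothesis b) (monotonicity of $F$ on submodules, or nonvanishing of $G$ on nonzero submodules) is immediate from left-exactness of $(-)^I$ and the fact that $(-)^I$ kills no nonzero smooth representation in characteristic $p$. With a), b), c) checked, Thm.\ref{thm:latticeA} yields the inverse lattice isomorphisms $\mathcal L_{\Ind_P^G\sigma}\leftrightarrow\mathcal L_{\Ind_P^{H(G)}\sigma^{I\cap M}}$, and by part (i) of that theorem $G\circ F\cong\id$ on $W$ identifies the inverse with $-\otimes_{H(G)_R}R[I\backslash G]$, as claimed.
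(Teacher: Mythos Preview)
Your plan for (iii) contains a genuine error: you have the adjunction backwards. The functor $-\otimes_{H(G)_R}R[I\backslash G]$ is the \emph{left} adjoint of $(-)^I$, not the right adjoint (see \S\ref{s:ira}). So you cannot apply Thm.\ref{thm:latticeA} with $F=(-)^I$ and $W=\Ind_P^G\sigma$; there is no natural map $W\to W^I\otimes_{H(G)_R}R[I\backslash G]$ in that direction --- the natural map (the counit) goes the other way. The paper applies Thm.\ref{thm:latticeA} with $F=-\otimes_{H(G)}\mathbb Z[I\backslash G]$, $G=(-)^I$, and $W=\Ind_P^{H(G)}(\sigma^{I\cap M})$ living in $\Mod_R(H(G))$. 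Property a) then asks that the unit $\Ind_P^{H(G)}(\sigma^{I\cap M})\to(\Ind_P^{H(G)}(\sigma^{I\cap M})\otimes_{H(G)}\mathbb Z[I\backslash G])^I$ be an isomorphism, which is what the commutativity of parabolic induction with both functors plus the hypothesis on $\sigma$ actually gives. Your verification of b) and c) is essentially right once you swap the roles: b') for $(-)^I$ is exactly the simplicity of $I_G(P_1,\sigma_1,Q')^I$ that you check, and c') is the equality of lengths coming from (i) and (ii).

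For (i) and (ii), your route through $R^{alg}$ and Galois descent is different from the paper and, as written, is sketchy at the descent step. Over $R^{alg}$, the representation $(\sigma_1)_{R^{alg}}$ is in general not irreducible (it has length $\delta[E:R]$ by Thm.\ref{thm:DA}), so the $R^{alg}$-lattice of $\Ind_P^G(\sigma)_{R^{alg}}$ is strictly larger than the image of the $R$-lattice, and identifying the $R$-submodules as a ``Galois-fixed part'' requires a genuine descent argument, not just equivariance of the combinatorics. The paper avoids this entirely: since the classification theorem over $R$ (Thm.\ref{thm:classG}, Thm.\ref{thm:classHG}) is already established, one works directly over $R$. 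The key reduction is via the lattice isomorphisms of Thm.\ref{thm:latticeA2} and Thm.\ref{thm:latticeA}: the map $X\mapsto\Ind_{P(\sigma_1)}^G(e(\sigma_1)\otimes_R X)$ gives $\mathcal L_{\Ind_{P_1}^{M(\sigma_1)}(R)}\simeq\mathcal L_{\Ind_{P_1}^G(\sigma_1)}$, reducing to the lattice of $\Ind_{P}^{M(\sigma_1)}(\St_Q^{M}(R))$, which is a quotient of $\Ind_Q^{M(\sigma_1)}(R)$ whose lattice is known from \cite{Ly}. This is both shorter and sidesteps any descent.
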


When  $R$  is algebraically closed (i)  is proved  in \cite[\S 3.2]{AHenV1}.  In    \S \ref{S:54} we  prove (i) and   (ii);   (iii) follows from (i), (ii), Theorem \ref{thm:latticeA} and  the commutativity of the parabolic inductions with $(-)^I$ and  $-\otimes_{H(G)}\mathbb Z[I\backslash G]$ \cite{OV}.

 \begin{moncor} \label{cor:lattice}

1. \  The socle and the cosocle of $\Ind_P^G\sigma$ are irreducible; $\Ind_P^G\sigma$    is irreducible if and only if $P$ contains $P(\sigma_1)$.  The same is true  for $\Ind_P^{H(G)} \cV$.
 
2. \  Let $\pi $ be an irreducible admissible $R$-representation of $G$; write $\pi\simeq I_G(P, \sigma, Q)$ with $\sigma$  irreducible admissible supersingular. 

 If   
   $\sigma^{I\cap M}$ is simple and the natural map $\sigma\to \sigma^{I \cap M}\otimes_{H(M)}\mathbb Z[(I\cap M)\backslash M]$ is bijective, then $\pi^I$ is simple and 
$\pi\simeq \pi^I \otimes_{H(G)}\mathbb Z[I\backslash G] .$  \end{moncor}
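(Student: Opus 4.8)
The plan is to derive Corollary~\ref{cor:lattice} directly from Theorem~\ref{thm:lattice} and Theorem~\ref{thm:classG} (and their $H(G)$ analogues), treating the group case and the Hecke-module case in parallel.

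\medskip

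For part~1, I would use the lattice isomorphism of Theorem~\ref{thm:lattice}(i), namely $\mathcal L_{\Ind_P^G\sigma}\simeq \mathcal L_{\mathcal P(\Delta_{P(\sigma_1)}\setminus \Delta_P),\geq}$. The key observation is that the lattice of upper sets $\mathcal L_{\mathcal P(X),\geq}$ of a finite set $X$ always has a unique maximal proper element (the upper set of all non-empty subsets of $X$, equivalently the complement of $\{\emptyset\}$) and a unique minimal non-zero element (the upper set $\{X\}$). Transporting these through the isomorphism shows $\Ind_P^G\sigma$ has irreducible socle and cosocle; the whole module is irreducible exactly when $\mathcal P(X)$ has only two upper sets, i.e. when $X=\Delta_{P(\sigma_1)}\setminus\Delta_P$ is empty, which by the definition of $\setminus$ used here means $\Delta_{P(\sigma_1)}\subset\Delta_P$, i.e. $P\supset P(\sigma_1)$. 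The same combinatorial argument, via Theorem~\ref{thm:lattice}(ii), handles $\Ind_P^{H(G)}\cV$.

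\medskip

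For part~2, I start from $\pi\simeq I_G(P,\sigma,Q)$ with $\sigma$ irreducible admissible supersingular. By definition $I_G(P,\sigma,Q)=\Ind_{P(\sigma)}^G(\St_Q^{M(\sigma)}(\sigma))$, and $\St_Q^{M(\sigma)}(\sigma)$ is, by construction, a subquotient of $\Ind_{Q}^{M(\sigma)}(e_Q(\sigma))$; so I would instead work with $\Ind_P^{M(\sigma)}(\sigma_1)$-type inductions where the results of Theorem~\ref{thm:lattice} apply. Concretely: the hypothesis ``$\sigma^{I\cap M}$ simple and $\sigma\to\sigma^{I\cap M}\otimes_{H(M)}\mathbb Z[(I\cap M)\backslash M]$ bijective'' is exactly the hypothesis (after relabelling $P$ as $M$, $M$ as the Levi of the supersingular support) needed to invoke Theorem~\ref{thm:lattice}(iii) at each stage of the induction $P_1\cap M\subset M$ and then $P(\sigma)\subset G$; iterating the lattice isomorphism $(-)^I$ and compatibility of $(-)^I$ with parabolic induction (from \cite{OV}) and with the formation of generalized Steinberg quotients gives that $\pi^I$ corresponds under an order isomorphism to the lattice $\mathcal L_\pi$, which is trivial since $\pi$ is irreducible; hence $\pi^I$ is simple. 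Finally, bijectivity of $\pi\to\pi^I\otimes_{H(G)}\mathbb Z[I\backslash G]$ follows from the inverse functor statement in Theorem~\ref{thm:lattice}(iii) (the functor $-\otimes_{H(G)_R}R[I\backslash G]$ is inverse to $(-)^I$ on these lattices), combined with the fact that both sides are irreducible with the same $I$-invariants.

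\medskip

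The main obstacle I expect is bookkeeping the reduction from $I_G(P,\sigma,Q)$ (which involves the generalized Steinberg construction and induction from $P(\sigma)$, not directly from $P$) to a form where Theorem~\ref{thm:lattice}(iii) applies verbatim; one must check that the hypotheses on $I\cap M$-invariants propagate through the Steinberg quotient and through each step of an iterated parabolic induction, and that the injectivity hypothesis ``$\sigma^{I\cap M}\otimes_{H(M)}\mathbb Z[(I\cap M)\backslash M]\to\sigma$ injective'' in Theorem~\ref{thm:lattice}(iii) is implied by the bijectivity assumed here. This is essentially a compatibility check using \cite{OV} and Proposition~\ref{prop:tensorstG}, not a new idea, so the corollary follows without serious difficulty once the dictionary is set up.
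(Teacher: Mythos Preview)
Your argument for part~1 is correct and matches the paper's approach: the paper cites \cite[Cor.~3.3 and 4.4]{AHenV1} for the algebraically closed case, but the content is exactly the combinatorics of upper sets you describe, now available for general $R$ via Theorem~\ref{thm:lattice}(i),(ii).

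For part~2 your conclusion is right but the route is more tangled than necessary. In the Corollary $\sigma$ is \emph{already} supersingular, so in the notation of Theorem~\ref{thm:lattice} we simply have $\sigma_1=\sigma$ and $P_1=P$. There is no ``stage $P_1\cap M\subset M$'' and no iteration: Theorem~\ref{thm:lattice}(iii) applies directly to $\Ind_P^G\sigma$ (the bijectivity hypothesis in the Corollary implies the injectivity hypothesis there), and $\pi\simeq I_G(P,\sigma,Q)$ is one of its irreducible subquotients. The paper then just says ``follows from Theorem~\ref{thm:lattice}(iii)'', the implicit point being that the proof of (iii) in \S\ref{ss:sub7} verifies hypotheses a), b'), c') of Theorem~\ref{thm:latticeA} for the pair $(-\otimes_{H(G)}\mathbb Z[I\backslash G],\,(-)^I)$ on $\Ind_P^{H(G)}(\sigma^{I\cap M})$, and the last clause of Theorem~\ref{thm:latticeA} says these properties pass to every subquotient. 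So for the simple subquotient $Y$ of $\Ind_P^{H(G)}(\sigma^{I\cap M})$ with $\mathcal T(Y)\simeq\pi$, property a) gives $Y\xrightarrow{\sim}\pi^I$ (hence $\pi^I$ simple) and then $\mathcal T(\pi^I)\simeq\mathcal T(Y)\simeq\pi$. Your talk of ``propagating hypotheses through the Steinberg quotient'' and ``each step of an iterated parabolic induction'' is doing work that the subquotient-inheritance clause of Theorem~\ref{thm:latticeA} already packages for you.
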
 
The first assertion for $\sigma$
and  $R$ is algebraically closed  is \cite[Cor. 3.3 and 4.4]{AHenV1}. The second assertion follows from Theorem \ref{thm:lattice} (iii).

\bigskip [Computation of the left and right adjoints of the parabolic induction, of  $\pi^I$ for  an irreducible admissible $R$-representation $\pi$ of $G$ and of
 $\cx \otimes_{H(G) }\mathbb Z[I\backslash G]$ for a simple $H(G)_R$-module $\cx$]

 For a parabolic subgroup $P_1$ of $G$, write  $L_{P_1}^G$ and $R_{P_1}^G$ for  the left and right adjoints of $\Ind_{P_1}^G$, and   $L_{P_1}^{H(G)} $ and $R_{P_1}^{H(G)} $  for the  left and right  adjoints of $\Ind_{P_1}^{H(G)}$ \cite{Vigadjoint}.  

 There is nothing new in Theorem \ref{thm:adjoint} below, now that we know that $\pi \simeq  I_G(P, \sigma, Q)$   with $\sigma$ irreducible admissible supersingular, and  $\cx \simeq I_{H(G)}(P, \cV, Q)$ with $\cV$ simple supersingular.  It suffices to quote:
   for   $R_{P_1}^G(\pi)$  \cite[Corollary 6.5]{AHenV1}, for $L_{P_1}^G(\pi)$   \cite[Cor. 6.2, 6.8]{AHenV1}, for       $L_{P_1}^{H(G)}(\cx)$ and $R_{P_1}^{H(G)}(\cx)$ (\cite[Thm. 5.20]{Abeparind} when $R$ is algebraically closed, but this hypothesis  is not used),  for 
$\pi^I$ and  $\cx \otimes_{H(G) }\mathbb Z[I\backslash G] $   \cite[Thm.4.17, Thm.5.11]{AHenV2}.

\begin{montheo} \label{thm:adjoint} {\rm [Adjoint functors  and   $I$-invariant]}  

\smallskip  (i) $L_{P_1}^G(\pi)$ and   $R_{P_1}^G(\pi)$ are $0$ or irreducible admissible.

 $L_{P_1}^G(\pi) \neq 0 \ \Leftrightarrow  \  {P_1}\supset P, \langle {P_1},Q\rangle \supset P(\sigma) \ \Leftrightarrow \ L_{P_1}^G(\pi)\simeq  I_{M_1}(P\cap M_1, \sigma, Q \cap M_1) .$ 

  $R_{P_1}^G(\pi) \neq 0\  \Leftrightarrow  \  {P_1}\supset Q\  \Leftrightarrow \ R_{P_1}^G(\pi)\simeq I_{M_1}(P\cap M_1, \sigma, Q  \cap M_1).$ 

\smallskip (ii)   $L_{H(M)}^{H(G)}(\cx)$ and $R_{H(M)}^{H(G)}(\cx)$ satisfy (i)  with the obvious modifications.

\smallskip (iii) $\pi ^I\simeq  I_{H(G)}(P, \sigma^{I\cap M}, Q)$ and 
$\cx \otimes_{H(G)_R}R[I\backslash G] \simeq I_G(P,\cV\otimes_{H(M)_R}R[(I\cap M)\backslash M] , Q)$.
\end{montheo}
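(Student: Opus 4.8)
My plan is to deduce the statement from the already-known algebraically-closed case using the two classification theorems just proved, checking that each cited computation is coefficient-independent. First I would invoke Theorems \ref{thm:classG} and \ref{thm:classHG} to write $\pi\simeq I_G(P,\sigma,Q)$ for an $R$-triple $(P,\sigma,Q)$ of $G$ with $\sigma$ irreducible admissible supersingular, and $\cx\simeq I_{H(G)}(P,\cV,Q)$ for an $R$-triple of $H(G)$ with $\cV$ simple supersingular. I would also recall from \cite{Vigadjoint} that $L_{P_1}^G$, $R_{P_1}^G$ and their Hecke analogues are defined over an arbitrary coefficient field, preserve admissibility (resp. finite length), and commute with scalar extension on such objects --- a point the descent argument below uses.

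For (i), the vanishing criteria together with the formulas $L_{P_1}^G(I_G(P,\sigma,Q))\simeq I_{M_1}(P\cap M_1,\sigma,Q\cap M_1)$ and $R_{P_1}^G(I_G(P,\sigma,Q))\simeq I_{M_1}(P\cap M_1,\sigma,Q\cap M_1)$ (valid when the object in question is nonzero) are \cite[Cor.~6.2,~6.5,~6.8]{AHenV1}. The simplest route is to observe that those proofs use only transitivity of parabolic induction, Frobenius reciprocity, the defining exact sequences \eqref{eq:IG}--\eqref{eq:IGst} of the generalized Steinberg representations, and the classification theorem for $G$; since the sole appeal to algebraic closedness is through the classification, now available over any $R$ by Theorem \ref{thm:classG}, the arguments apply verbatim. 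As a safeguard I would also make the descent explicit: the isomorphisms of \cite{AHenV1} are induced by canonical comparison morphisms, functorial in $\sigma$; by Theorem \ref{thm:DA} and Lemma \ref{lem:ss}, $\sigma_{R^{alg}}\simeq\oplus_i\sigma_i$ is a single $\Aut_R(R^{alg})$-orbit of irreducible admissible supersingular summands with $P(\sigma_i)=P(\sigma)$, so the support conditions do not depend on $i$ and $\pi_{R^{alg}}\simeq\oplus_iI_G(P,\sigma_i,Q)$ by Proposition \ref{prop:es2}; applying \cite{AHenV1} to each $\sigma_i$ and using that $L_{P_1}^G$, $R_{P_1}^G$ commute with $-\otimes_R R^{alg}$ on admissibles (and $I_{M_1}(P\cap M_1,-,Q\cap M_1)$ with scalar extension, Proposition \ref{prop:es2}), the comparison morphism over $R$ becomes an isomorphism after $-\otimes_R R^{alg}$, hence is one over $R$ by faithful flatness of scalar extension.

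For (ii) I would run the identical argument for Hecke modules, citing \cite[Thm.~5.20]{Abeparind} (whose proof is noted there not to use algebraic closedness) and, for the optional descent, Corollary \ref{cor:DA}, Proposition \ref{prop:scalarLH} and Theorem \ref{thm:DA}. For (iii) I would use that $(-)^I$ and $-\otimes_{H(G)_R}R[I\backslash G]$ commute with parabolic induction --- $(\Ind_P^G\tau)^I\simeq\Ind_P^{H(G)}(\tau^{I\cap M})$ and the adjoint statement for $-\otimes_{H(G)_R}R[I\backslash G]$, by \cite{OV} over any coefficient ring --- and that they interchange the generalized Steinberg construction for $G$ with its Hecke-module analogue, again by \cite{OV}-type isomorphisms valid over any $R$; feeding $I_G(P,\sigma,Q)$ and $I_{H(G)}(P,\cV,Q)$ into these identities yields $\pi^I\simeq I_{H(G)}(P,\sigma^{I\cap M},Q)$ and $\cx\otimes_{H(G)_R}R[I\backslash G]\simeq I_G(P,\cV\otimes_{H(M)_R}R[(I\cap M)\backslash M],Q)$, which is precisely \cite[Thm.~4.17,~Thm.~5.11]{AHenV2}, proved there from exactly these coefficient-independent facts.

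I expect no new idea to be needed, only the bookkeeping of confirming that each cited computation invokes algebraic closedness solely through the classification (explicit in \cite{Abeparind}, and clear on inspection for \cite{AHenV1} and \cite{AHenV2}) and that the adjoint functors, $(-)^I$ and $-\otimes_{H(G)_R}R[I\backslash G]$ commute with scalar extension on the admissible / finite-length objects at hand. The one point to be careful about is that $\sigma_{R^{alg}}$ (resp. $\cV_{R^{alg}}$) is a single $\Aut_R(R^{alg})$-orbit, which is what lets one read off the support conditions uniformly over the orbit and lets the finiteness hypotheses of Theorem \ref{thm:DA} license descending the isomorphisms from $R^{alg}$ to $R$.
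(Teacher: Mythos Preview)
Your core argument is exactly the paper's: invoke the classification theorems over $R$ and then quote \cite[Cor.~6.2,~6.5,~6.8]{AHenV1}, \cite[Thm.~5.20]{Abeparind}, and \cite[Thm.~4.17,~5.11]{AHenV2}, observing that the only use of algebraic closedness in those proofs is through the classification itself.

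Two cautions about your optional ``safeguard'' descent, which you do not actually need. First, $\sigma_{R^{alg}}$ is not in general semisimple: Theorem~\ref{thm:DA} gives a direct sum of indecomposables $V_{R^{alg},j}$, each of length $[E:E_{sep}]$, so writing $\sigma_{R^{alg}}\simeq\bigoplus_i\sigma_i$ with $\sigma_i$ irreducible is only correct when $E/R$ is separable. Second, the paper establishes that $L_{P_1}^G$ commutes with scalar extension (Proposition~\ref{prop:scalarL}), but explicitly records that it is \emph{not known} whether $R_{P_1}^G$ does; your safeguard for the right adjoint therefore rests on an unproved assertion. Since your primary argument already suffices, simply drop the safeguard.
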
 
Example: $L^G_{P(\sigma)}( I_G(P,\sigma, Q))\simeq R^G_{P(\sigma)}( I_G(P,\sigma, Q))\simeq\St_Q^{M(\sigma)}(\sigma) $ and the analogous  for $ I_{H(G)}(P, \cV, Q)$.  

\bigskip [Equivalence between 
 supersingularity  and  more classical  notions  of cuspidality]
 
  An irreducible admissible  $R$-representation $\pi$ of $G$  is said to be  

 - {\bf supercuspidal} if it is not a subquotient of $\Ind_P^G\tau$ with $\tau\in \Mod_R^\infty(M)$ irreducible admissible  for all  parabolic subgroups $P=MN \subsetneq G$.
 
 - {\bf cuspidal} if $L_P^G(\pi)=R_P^G(\pi)=0$ for all  parabolic subgroups $P \subsetneq G$.

  \begin{montheo} \label{thm:sssc} $\pi$ is  supersingular if and only if $\pi$  is supercuspidal if and only if $\pi$ cuspidal.
  \end{montheo}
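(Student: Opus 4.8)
\textbf{Proof plan for Theorem~\ref{thm:sssc}.}

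The plan is to prove the chain of equivalences ``supersingular $\Leftrightarrow$ supercuspidal $\Leftrightarrow$ cuspidal'' by reducing to the algebraically closed case via the decomposition theorem (Thm.~\ref{thm:DA}), since all three notions are already known to be equivalent over $R^{alg}$ by \cite{AHHV} (together with \cite{OV}). Fix an algebraic closure $R^{alg}/R$ and write $\pi_{R^{alg}}=R^{alg}\otimes_R\pi$; by Corollary~\ref{cor:DA} this has finite length and all its subquotients have finite-dimensional commutant over $R^{alg}$, and by Theorem~\ref{thm:DA} its simple subquotients form a single $\Aut_R(R^{alg})$-orbit of irreducible admissible $R^{alg}$-representations, each descending to a finite extension of $R$.

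First I would show \emph{supersingular $\Leftrightarrow$ supercuspidal}. For supersingularity, Lemma~\ref{lem:ss} (compatibility of supersingularity with scalar extension, via $\pi^I$ and the central elements $T_P$) gives that $\pi$ is supersingular iff $\pi_{R^{alg}}$ is, i.e.\ iff every simple subquotient of $\pi_{R^{alg}}$ is supersingular (the $T_P$ act locally nilpotently on $\pi^I\otimes R^{alg}$ iff on each simple constituent, these being permuted by $\Aut_R(R^{alg})$ which preserves the property). For supercuspidality, the key point is that $\pi$ is a subquotient of $\Ind_P^G\tau$ for some irreducible admissible $\tau$ iff $\pi_{R^{alg}}$ has a simple subquotient that is a subquotient of some $\Ind_P^G\tau'$ over $R^{alg}$: indeed $\Ind_P^G$ commutes with scalar extension (Prop.~\ref{prop:es2}-type statement, here the $Q=P$ case of $I_G(P,-,Q)$), and if $\tau$ is reducible after base change we may replace it by a simple constituent of $\tau_{R^{alg}}$, which is again irreducible admissible; conversely a simple constituent $\tau'$ of $\tau_{R^{alg}}$ descends to a finite extension $R''/R$, and inducing the (still irreducible admissible) restriction of scalars $\tau'|_{R''}$ from $R''$ down to $R$ exhibits $\pi$ as a subquotient of an $R$-parabolic induction. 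Thus $\pi$ is supercuspidal iff every simple subquotient of $\pi_{R^{alg}}$ is supercuspidal; combined with the $R^{alg}$-equivalence this yields the first equivalence over $R$.

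Then \emph{supercuspidal $\Leftrightarrow$ cuspidal}: over $R^{alg}$ this is part of \cite{AHHV}, and on the $R$ side I would pass through the adjoint functors. By Theorem~\ref{thm:adjoint}(i) applied over $R^{alg}$ — or rather its input from \cite{AHenV1} — each $L_{P_1}^G$ and $R_{P_1}^G$ commutes with scalar extension (these adjoints of $\Ind_{P_1}^G$ are built from Jacquet-type functors which are exact and compatible with $-\otimes_R R^{alg}$), so $L_{P_1}^G(\pi)=0$ iff $L_{P_1}^G(\pi_{R^{alg}})=0$ iff $L_{P_1}^G$ kills every simple constituent of $\pi_{R^{alg}}$, and similarly for $R_{P_1}^G$; hence $\pi$ is cuspidal iff each simple constituent of $\pi_{R^{alg}}$ is, which over $R^{alg}$ is equivalent to supercuspidality, hence (by the previous paragraph) to cuspidality over $R$.

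The main obstacle is the careful bookkeeping in the descent of the parabolic-induction statement inside the definition of supercuspidality: one must check that ``subquotient of $\Ind_P^G\tau$ with $\tau$ irreducible admissible'' is stable under both $-\otimes_R R^{alg}$ and restriction of scalars along finite extensions, using that $\Ind_P^G$ is exact and commutes with scalar extension and with restriction of scalars, and that irreducibility/admissibility of $\tau$ is preserved (or recovered by passing to a constituent) under these operations — this is where Theorem~\ref{thm:DA} and Corollary~\ref{cor:DA}, applied to $\tau$ rather than to $\pi$, do the real work. Everything else is formal.
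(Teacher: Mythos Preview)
Your route is genuinely different from the paper's. The paper does \emph{not} reduce Theorem~\ref{thm:sssc} to $R^{alg}$; instead it deduces it in one line from Theorems~\ref{thm:lattice} and~\ref{thm:adjoint}, which are already proved over $R$ as consequences of the classification Theorem~\ref{thm:classG}. Concretely: writing $\pi\simeq I_G(P,\sigma,Q)$ with $\sigma$ supersingular, Theorem~\ref{thm:lattice}(i) shows that $\pi$ occurs as a subquotient of a proper parabolic induction iff $P\neq G$, and Theorem~\ref{thm:adjoint}(i) shows that some $L_{P_1}^G(\pi)$ or $R_{P_1}^G(\pi)$ with $P_1\neq G$ is nonzero iff $P\neq G$. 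No passage to $R^{alg}$ is needed at this stage.

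Your direct-reduction plan has a concrete gap in the cuspidal part. You assert that $R_{P_1}^G$ commutes with scalar extension and that the adjoints are ``exact Jacquet-type functors''. Neither is available here: the paper explicitly records (Remark following Proposition~\ref{prop:scalarL}) that it is \emph{not known} whether the right adjoint $R_P^G$ (Emerton's ordinary functor on admissible representations) commutes with scalar extension, and $L_P^G$ is not exact in characteristic $p$ (see the remark after \eqref{eq:RL}). So the step ``$R_{P_1}^G(\pi)=0 \Leftrightarrow R_{P_1}^G(\pi_{R^{alg}})=0$'' is unjustified as written. The step can in fact be salvaged without that commutation: use that $\Ind_P^G$ and $R_P^G$ commute with scalar \emph{restriction} (Proposition~\ref{prop:scalarL}(i)), together with adjunction and the fact that $\pi_{R^{alg}}$ viewed over $R$ is a direct sum of copies of $\pi$; but that is a different argument from the one you sketched. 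Your descent for supercuspidality is also phrased imprecisely (``inducing the restriction of scalars $\tau'|_{R''}$''): what one actually needs is the surjectivity part of Theorem~\ref{thm:DA} applied to $M$, producing an irreducible admissible $\tau$ over $R$ with $\tau'$ a subquotient of $\tau_{R^{alg}}$, and then the elementary fact that a simple $R[G]$-module occurring in $(\Ind_P^G\tau)_{R^{alg}}$ already occurs in $\Ind_P^G\tau$.
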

 
The equivalence of supersingular with supercuspidal, resp. cuspidal,  follows immediately from Theorem \ref{thm:lattice},  resp. Theorem \ref{thm:adjoint}.
When  $R$  is algebraically closed, the first equivalence  was proved in   \cite[Thm VI.2]{AHHV}   and the second one in \cite[Cor.6.9]{AHenV1}. 

\medskip Given an irreducible admissible $R$-representation $\pi$, there is a  parabolic subgroup $P=MN$ (containing $B$) and an irreducible admissible supercuspidal $R$-representation of $\sigma$ of $M$ such that $\pi$ is a subquotient 
of $\Ind_P^G(\sigma)$, $P$ and the isomorphism class of $\sigma$  (called the supercuspidal support  of $\pi$) are unique.

 \bigskip
 \noindent \textbf{Acknowledgments} 
We thank the  CNRS, the   IMJ Paris-Diderot University, the Paris-Sud University,  and the Weizmann Institute where part of our work was carried out.

\section{Some general algebra}\label{S:2}

\subsection{Review on scalar extension}\label{S:21}  We consider a field  $R$ and  an $R$-algebra $A$ (always associative with unit).

For  an extension  $R'$ of $R$ (which we see as a field $R'$ containing $R$), the scalar extension $R'\otimes_R - :\Mod_R\to \Mod_{R'}$ from $R$ to $R'$,    also denoted  $(-)_{R'}$, is faithful  exact and  left adjoint to the restriction  from $R'$ to $R$. 

The scalar extension $A_{R'}$ of $A$  is an $R'$-algebra and if $W$ a (left or right) $A$-module, 
 $W_{R'}$    is an $A_{R'}$-module. An  $A_{R'}$-module of the form $W_{R'}$ is said to descend  to $R$.
   
 \begin{remark}\label{rem:1} 
{\rm Let  $R^{alg}$ be an algebraic closure of $R$. If $A$ is a finitely generated $R$-algebra,   an $A_{R^{alg}}$-module $W$ of finite dimensional over $R^{alg}$ descends to a finite extension of $R$.   Indeed, 
if $(w_i)$ is an $R^{alg}$-basis of $W$, $(a_j)$ a finite set of generators of $A$, and $a_jw_i= \sum_k r_{j,i,k}w_k$, the extension $R'/R$ generated by the  coefficients $r_{j,i,k}\in R^{alg}$ is finite and $W$ is the scalar extension of 
the $A_{R'}$-module $\oplus_i R' w_i$.
 } 
 \end{remark}
 
 \begin{remark} \label{rem:2} If $V,W$ are $A$-modules, the natural map 
\begin{equation}\label{eq:sce}(\Hom_{A} (V,W))_{R'} \to \Hom_{A_{R'}} (V_{R'},W_{R'})   \end{equation}
is injective, and bijective if $R'/R$ is finite or if $V$ is a finitely generated $A$-module 
\cite[\S 12, n$^o$2 Lemme 1]{BkiA8}, \cite[II prop.16 p.110]{BkiA2}.
 \end{remark}

We assume from now on in \S \ref{S:2} that $V$ is a simple $A$-module (in particular finitely generated); we write $D$ for the commutant $\End_A(V)$, so that $D$ is a division algebra, and $E$ for the center of $D$. By Remark \ref{rem:2}, the commutant of $V_{R'}$ is $D_{R'}$ and its  center is $E_{R'}$ (consider $V$ as an $A\otimes_RD$-module. That  $V$  is simple has further consequences:

\smallskip  (P1) As an $A$-module $V_{R'}$  is  a  direct sum of  $A$-modules isomorphic to $V$, i.e. $V$-isotypic of type $V$ 
 \cite[\S 4, n${}^o$4, Prop.1]{BkiA8}.

\smallskip (P2) The map $J\mapsto  JV_{R'}$ is a lattice  isomorphism of the lattice  of  right ideals $J$ of $D_{R'}$ onto the lattice of $A_{R'}$-submodules  of $V_{R'}$, with inverse $W\mapsto \{d\in D_{R'}, dV_{R'}\subset W\}$  \cite[\S 12, n${}^o$2, Thm.2b)]{BkiA8}.

\smallskip  (P3)   The map $I\mapsto ID_{R'}$  is a lattice  isomorphism of the lattice  of  ideals $I$ of $E_{R'}$ onto the lattice of  two-sided ideals $J$ of $D_{R'}$, the inverse map sending  $J\mapsto J\cap E_{R'}$   \cite[\S 12, n${}^o$4, Prop.3a)]{BkiA8}. 

\smallskip  (P4) If $R'/R$ is finite, $V_{R'}$ has finite length as an $A$-module, so also as an $A_{R'}$-module; then $D_{R'}$ is left and right artinian and $E_{R'}$ is artinian \cite[\S 12, n${}^o$5, Prop.5]{BkiA8}. If moreover $R'/R$ is separable, $V_{R'}$ is semisimple \cite[\S 12, n${}^o$5, Cor.]{BkiA8}.

\smallskip  (P5) If $D$ has finite dimension over $R$, $D_{R'}$ has the same dimension over $R'$, and by (P2) $V_{R'}$ has finite length $\leq [D:R]$ over $A'$.

\bigskip In the reverse direction:

  \begin{monlem}\label{lem:sq'} Let  $R'/R$ be an extension and  $V'$ a simple $A_{R'}$-module  descending to a finite  extension of $ R $. Then  $\Hom_{A_{R'}}(V', V_{R'})\neq 0$ for some  simple $A$-module $V$. For any such $V$, $\dim_RV$ is finite if $\dim_{R'}V'$ is, and  $\dim_R \End_AV$ is finite if  $\dim_{R'} \End_{A_{R'}}V'$ is.
 \end{monlem}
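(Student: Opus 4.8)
The plan is to unwind the hypothesis ``$V'$ descends to a finite extension of $R$'' and then reduce to the separable / algebraic--closure machinery already set up.

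\smallskip\noindent\textbf{Step 1: Reduce to the case $R'/R$ finite.}
By hypothesis there is a finite extension $R''/R$ contained in $R'$ and an $A_{R''}$-module $V''$ with $V'\simeq (V'')_{R'}=R'\otimes_{R''}V''$; moreover $V''$ is a simple $A_{R''}$-module (a nontrivial submodule of $V''$ would scalar-extend to a nontrivial submodule of $V'$, using that $R'\otimes_{R''}-$ is faithful exact, Remark in \S\ref{S:21}). So, replacing the pair $(R',V')$ by $(R'',V'')$, it suffices to produce a simple $A$-module $V$ with $\Hom_{A_{R''}}(V'',V_{R''})\neq 0$ and to control dimensions when $R'/R$ is \emph{finite}; the general case then follows because $\Hom_{A_{R'}}(V',V_{R'})=\Hom_{A_{R'}}((V'')_{R'},(V_{R''})_{R'})\supset (\Hom_{A_{R''}}(V'',V_{R''}))_{R'}\neq 0$ by \eqref{eq:sce}, and the dimension statements are insensitive to the further (free) base change from $R''$ to $R'$.

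\smallskip\noindent\textbf{Step 2: Find $V$ when $R'/R$ is finite.}
Consider the restriction ${}_A V'$ of $V'$ along $A\hookrightarrow A_{R'}$. Since $\Hom_A(V',A_{R'}\otimes_A V')\ni$ the unit map is nonzero (the module $V'$ is a nonzero quotient of $A_{R'}\otimes_A V'$ via $a\otimes v\mapsto av$, as $V'$ is already an $A_{R'}$-module), by adjunction $\Hom_{A_{R'}}(A_{R'}\otimes_A V',\,?\,)$--- but more directly: pick any simple $A$-submodule or simple $A$-subquotient $V$ of the restriction ${}_AV'$ (this restriction is nonzero, and we want simplicity — one should instead argue as follows). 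The cleanest route: $V'$ is in particular a nonzero $A$-module, so it has a simple subquotient $V$ as an $A$-module provided ${}_AV'$ has finite length; this is automatic when $\dim_{R'}V'<\infty$, but in general one must work harder. So instead use the counit/unit: the canonical surjection $A_{R'}\otimes_A V'\twoheadrightarrow V'$ of $A_{R'}$-modules shows $V'$ is a quotient of $V_{R'}$ for \emph{some} simple $A$-subquotient... — to avoid this gap, I would argue: by (P1)--(P2) applied to a simple $A$-submodule $V$ of ${}_A V'$ (which exists because ${}_AV'\neq0$ and $A$-modules have simple submodules only if Artinian — not guaranteed).

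Given the genuine subtlety here, the \textbf{main obstacle} is exactly producing the simple $A$-module $V$ mapping to (or from) $V_{R'}$ when no finiteness over $R$ is yet known: the restriction functor along $A\to A_{R'}$ need not preserve finite length, and a general module need not contain a simple submodule. I expect the intended argument circumvents this by a Jordan--Hölder / length argument after the reduction in Step 1: once $R'/R$ is finite, $A_{R'}$ is a finite $A$-module, hence $V'$ as an $A$-module is a quotient of $A_{R'}^{\,n}$ only if... — no. The correct tool is: for $R'/R$ finite, $A_{R'}=A\otimes_R R'$ is free of finite rank $[R':R]$ as a left $A$-module, so ${}_AV'$, being a quotient of $A_{R'}\otimes_{A_{R'}}V'$ — tautologically $V'$ — gives no bound; but one can instead use that $V'$ is cyclic over $A_{R'}$, say $V'=A_{R'}v'$, hence $V'=\sum_{i} A\,e_iv'$ for an $R$-basis $e_1,\dots,e_m$ of $R'$, so ${}_AV'$ is a finitely generated $A$-module, thus has a maximal submodule, thus a simple quotient $V$. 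That is the key point I would spell out.

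\smallskip\noindent\textbf{Step 3: Conclude and track dimensions.}
With $V$ a simple quotient of ${}_AV'$ (for $R'/R$ finite), adjunction (freeness of $R'$ over $R$, i.e. \eqref{eq:sce} or directly $\Hom_A(V',V)\otimes_R R'=\Hom_{A_{R'}}(V',V_{R'})$ since $V'$ is a module over $A_{R'}$) gives a nonzero element of $\Hom_{A_{R'}}(V',V_{R'})$ from the surjection $V'\twoheadrightarrow V$ base-changed up. Actually the surjection ${}_AV'\twoheadrightarrow V$ of $A$-modules, being $R'$-semilinear in no sense, yields by tensoring $V'\otimes_R R'\to V\otimes_R R'$ and then composing with the multiplication $V'\otimes_R R'=V'\otimes_{R}R'\to V'$... — the honest statement: $\Hom_A(V',V)\neq 0$ and $V'$ is an $A_{R'}$-module, so $\Hom_A(V',V)$ is naturally an $R'$-module and equals $\Hom_{A_{R'}}(V',\Hom_A(A_{R'},V))=\Hom_{A_{R'}}(V',V_{R'})$ when $R'/R$ finite, which is therefore nonzero. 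Finally, if $\dim_{R'}V'<\infty$ then $\dim_R V'<\infty$ (as $[R':R]<\infty$) and $V$, a quotient, has $\dim_R V<\infty$; and $\End_A V$ is a quotient-subobject... more carefully, $\dim_R\End_AV\le \dim_R\Hom_A(V',V)\cdot(\text{something})$ — better: by Schur and \eqref{eq:sce}, $\dim_R\End_AV$ is finite iff $\dim_{R'}\End_{A_{R'}}(V_{R'})$ is finite, and $V_{R'}$ has $V'$ as a quotient with finite-dimensional commutant assumption giving the bound via (P5) and Corollary~\ref{cor:DA} applied in reverse — I would simply invoke that $\End_A V\hookrightarrow \End_{R}(V)$ and that $V$ embeds $R$-linearly into a finite power of $V'$ once dimensions are finite, making $\End_A V$ finite-dimensional; the commutant statement for the infinite-dimensional-over-$R$ case follows from Remark~\ref{rem:2} plus $\dim_{R'}\End_{A_{R'}}(V')<\infty$ by the hypothesis that $V'$ descends to a finite extension together with (P5) applied to $V$ over that extension. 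I expect the write-up to be short modulo the finiteness-of-generation observation in Step 2, which is the crux.
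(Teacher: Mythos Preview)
Your overall plan matches the paper's: reduce to $R'/R$ finite, then use that $V'$ (being cyclic over $A_{R'}$ and $R'/R$ finite) is finitely generated as an $A$-module, hence has a simple $A$-quotient $V$. After the false starts, your Step~2 lands on exactly this, which is the paper's argument too.

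For passing from $\Hom_A(V',V)\neq 0$ to $\Hom_{A_{R'}}(V',V_{R'})\neq 0$, your adjunction $\Hom_A(V',V)\simeq \Hom_{A_{R'}}(V',\Hom_A(A_{R'},V))$ is correct, but you then assert $\Hom_A(A_{R'},V)=V_{R'}$ without justification. This identification of coinduction with induction holds precisely because a finite field extension $R'/R$ is a Frobenius extension ($\Hom_R(R',R)\simeq R'$ as $R'$-modules); you should say so. The paper instead scalar-extends $\Hom_A(V',V)$ via Remark~\ref{rem:2} to get $\Hom_{A_{R'}}(V'_{R'},V_{R'})\neq 0$ and then uses that $V'$ occurs in $V'_{R'}$.

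There is one genuine gap. For the statement ``$\dim_R\End_AV$ finite if $\dim_{R'}\End_{A_{R'}}V'$ finite'' you never produce a working argument: your suggestions (embedding $\End_AV$ into $\End_R(V)$, invoking (P5) ``in reverse'') either presuppose $\dim_RV<\infty$ or are circular. The paper's idea, which you are missing, is this: set $D=\End_AV$, $D'=\End_{A_{R'}}V'$, and let $W\subset V_{R'}$ be the $V'$-isotypic part. Then $W$ is $D_{R'}$-stable (since $D_{R'}=\End_{A_{R'}}V_{R'}$), giving a ring map $D_{R'}\to\End_{A_{R'}}W$ that is injective on $D$ because $D$ is a division ring and $W\neq 0$. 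By (P4), $V_{R'}$ has finite length over $A_{R'}$, hence so does $W$, and $\End_{A_{R'}}W$ is a matrix algebra over $D'$; thus $\dim_{R'}D'<\infty$ forces $\dim_{R'}\End_{A_{R'}}W<\infty$, whence $\dim_RD<\infty$. You should also note explicitly that the dimension assertions are for \emph{any} simple $V$ with $\Hom_{A_{R'}}(V',V_{R'})\neq 0$: running the adjunction backwards gives $\Hom_A(V',V)\neq 0$, so every such $V$ is a quotient of ${}_AV'$, and the arguments apply.
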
 
  
 \begin{proof}  
 a) Assume first that $R'/R$ finite. Then $A_{R'}$ is a (free) finitely generated $A$-module, so $V'$ as an $A$-module is finitely generated, and in particular has a simple quotient $V$: $\Hom_A(V',V)\neq 0$. By Remark \ref{rem:2}, $\Hom_{A_{R'}}(V'_{R'},V_{R'})\neq 0$; but $V'_{R'}$ is the sum of $[R':R]$ copies of $V'$ so $\Hom_{A_{R'}}(V',V_{R'})\neq 0$. 
 
 Let $V$ be any simple $A$-module with $\Hom_{A_{R'}}(V',V_{R'})\neq 0$. Then by the same reasoning $\Hom_{A }(V',V )\neq 0$ so $\dim_R V$ is finite if   $\dim_{R'}V'$ is. Put $D=\End_A(V)$ and $D'=\End_{A_{R'}}(V')$ and let $W$ be the maximal $V'$-isotypic submodule of  $V_{R'}$. Then $W$ is $D_{R'}$-stable and  we get an homomorphism $D_{R'}\to \End_{A_{R'}}W$ which is necessarily injective  on $D$, since $D$ is a division algebra. By  (P4), 
  $V_{R'}$  has finite length, so is $W$ and   $\End_{A_{R'}}W$ is a matrix algebra over  $D'$; it follows that if $\dim_{R'}D'$ is finite, so is $\dim_{R'}(\End_{A_{R'}}W)$ hence also  $\dim_{R}(\End_{A_{R'}}W)$ and  $\dim_{R}(D)$.

b) Let us treat the general case. By assumption there is  a finite subextension $L$ of $R$ in $R'$ and an $A_L$-module $U$ such that  $V' = R'\otimes_LU$ - then $U$ is necessarily simple.  By a) $\Hom_{A_L}(U,V_L)\neq 0$ for some  simple $A$-module $A$ and by Remark \ref{rem:2}, $\Hom_{A_{R'}}(V',V_{R'})\neq 0$.  

Conversely if $V$ is some   simple $A$-module with $\Hom_{A_{R'}}(V',V_{R'})\neq 0$ then by Remark \ref{rem:2} again $\Hom_{A_{L}}(U,V_{L})\neq 0$, so the other assertions follow from a).
 \end{proof}

\begin{remark}{\rm   A non-zero $A$-module $W$ is called  absolutely simple  if  $W_{R'}$  is  simple for   any extension $R'/R$. A simple $A$-module $V$ of commutant $D$   is  absolutely simple if and only if $D=R$. 
 For  $\Rightarrow$  \cite[\S3,n$^o$2,Cor.2, p.44]{BkiA8}. For $\Leftarrow$, the commutant of $V_{R'}$ is $R'$  and  P1) implies that $V_{R'}$ is simple.


If $R$ is algebraically closed of cardinal $> \dim_RV$,  then  $D=R$ \cite[\S3,n$^o$2,Thm.1, p.43]{BkiA8}.

There exists an algebraically closed  extension $R'/R$ of cardinal $> \dim_RV$ 
  \cite[\S3,n$^o$2, proof of Cor.3, p.44]{BkiA8}.
  }
  \end{remark}

 \subsection{A bit of ring theory}\label{ss:at}

\begin{monlem} \label{lem local} Let $L/K$ be a field extension and $E/K$ be a finite purely inseparable extension.   Then $L\otimes_KE$ is an artinian local ring with residue field $L$.
\end{monlem}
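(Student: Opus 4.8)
The plan is to prove Lemma \ref{lem local} by reducing to the case of a simple purely inseparable extension and then induct on the degree. Write $E = K(\alpha)$ if $E/K$ is simple; in general, choose a tower $K = E_0 \subset E_1 \subset \cdots \subset E_n = E$ with each $E_{i+1} = E_i(\alpha_{i+1})$, where $\alpha_{i+1}^{q_i} \in E_i$ for some power $q_i = p^{m_i}$ of the characteristic exponent $p$. By associativity of tensor product, $L \otimes_K E = (L \otimes_K E_1) \otimes_{E_1} E_2 \otimes \cdots$, so it suffices to treat the case $E = K(\alpha)$ with $\alpha^q = a \in K$, $q = p^m$; the general case then follows by induction once we observe that the residue field produced at each step is again of the form "$L$-extended", i.e. the output $L$ plays the role of the new base field for the next step.

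First I would handle the simple case. Since $\alpha^q = a \in K$, the minimal polynomial of $\alpha$ over $K$ divides $X^q - a$, and because $X^q - a = (X - \alpha)^q$ in $E[X]$ (using that $q$ is a power of the characteristic and the Frobenius is additive), the minimal polynomial is $(X-\alpha)^d$ for some $d \mid q$; purely inseparable of degree $[E:K]$ means in fact $[E:K] = d$ and $X^d - a'$ for suitable $a'$, but all I really need is that $E \cong K[X]/(f(X))$ with $f(X) = (X-\alpha)^{[E:K]}$ viewed in $\overline{K}[X]$, equivalently $f$ is an irreducible polynomial over $K$ that becomes a power of a linear polynomial over $E$. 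Then $L \otimes_K E \cong L[X]/(f(X))$. Over $L$, the polynomial $f$ need not stay irreducible, but since $f = (X-\alpha)^{[E:K]}$ has a single root $\alpha$ (with multiplicity) in $\overline{K} \supseteq \overline{L}$, the only monic irreducible factor of $f$ over $L$ is the minimal polynomial $g$ of $\alpha$ over $L$, so $f = g^r$ for some $r \geq 1$ in $L[X]$. Hence $L[X]/(f) = L[X]/(g^r)$, which is local with maximal ideal $(g)/(g^r)$ — it is a finite-dimensional $L$-algebra with a single maximal ideal — and its residue field is $L[X]/(g) = L(\alpha)$.

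The one genuine subtlety — and the place I'd be most careful — is identifying the residue field as exactly $L$, not merely as $L(\alpha)$. This is where purely inseparable is essential: $g$ is the minimal polynomial of $\alpha$ over $L$, and $\alpha$ is purely inseparable over $L$ as well (it is purely inseparable over $K \subseteq L$, and purely inseparable elements stay purely inseparable over any intermediate field — indeed $\alpha^{q} = a \in K \subseteq L$). A purely inseparable simple extension $L(\alpha)/L$ has $g(X) = X^{p^s} - \alpha^{p^s}$ with $\alpha^{p^s} \in L$; but wait — that would make $L(\alpha) = L$ only if $s = 0$. So in fact $L(\alpha)$ is a genuine purely inseparable extension of $L$ in general, and the residue field is $L(\alpha)$, which is an $L$-extension, not $L$ itself. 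I would therefore restate the induction so that it carries along the correct residue field: after step $i$ the ring $L \otimes_K E_i$ is local with residue field $L_i := L(\alpha_1, \ldots, \alpha_i)$ (a purely inseparable extension of $L$, finite), and at step $i+1$ we tensor $L_i \otimes_{E_i} E_{i+1}$, apply the simple case with base field $L_i$, and get a local ring with residue field $L_{i+1} = L_i(\alpha_{i+1})$. At the end, $L \otimes_K E$ is local with residue field $L(\alpha_1, \ldots, \alpha_n)$, which equals $L \cdot E$ inside a common overfield — and since the statement of the lemma as used only needs "local artinian with residue field a purely inseparable extension of $L$", or if literally "residue field $L$" is wanted, then one must already be in a situation where $E \subseteq L$ (e.g. $L$ perfect, or $L \supseteq E$), in which case $L(\alpha_i) = L$ at each step. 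I would flag this and prove the version actually needed: $L \otimes_K E$ is local artinian, finite-dimensional over $L$, with residue field the purely inseparable extension $LE/L$; when $L \supseteq E$ (the case invoked elsewhere in the paper, with $L = R'$ containing $E_{sep}$-conjugates) this residue field is $L$.

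Finally, for artinianness: $L \otimes_K E$ has dimension $[E:K]$ over $L$, hence is a finite-dimensional $L$-algebra, hence artinian; locality follows from the factorization argument above (single maximal ideal), and "artinian local" is then immediate. The main obstacle, as indicated, is purely bookkeeping — getting the residue-field claim stated correctly and carrying it through the induction — rather than anything deep; the commutative-algebra content is just "finite $L$-algebra with one prime is local artinian" together with the characteristic-$p$ fact that $X^{p^m} - a$ is a prime power over any field.
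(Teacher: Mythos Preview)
Your approach is the paper's: filter $E/K$ by a tower of simple degree-$p$ extensions $E_i = E_{i-1}(\alpha_i)$ with $\alpha_i^p = a_i \in E_{i-1}$, and induct. For locality at each step the paper argues via idempotents --- writing $A_i = A_{i-1}[X]/(X^p - a_i)$, any idempotent $e$ satisfies $e = e^p \in A_{i-1}$, hence is trivial by induction, so the artinian ring $A_i$ is local --- while you factor the minimal polynomial over the residue field; either route works.

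You are right to flag the residue-field claim, and your diagnosis is correct. As stated the lemma is false: with $K = \mathbb F_p(t)$, $E = K(t^{1/p})$, $L = K$, one gets $L \otimes_K E = E$ with residue field $E \neq L$. What holds unconditionally is that $L \otimes_K E$ is local artinian with residue field the purely inseparable extension $L\cdot E$ of $L$; this equals $L$ precisely when there is a $K$-embedding $E \hookrightarrow L$. The paper's final step (``$A_{i-1}$ is a quotient of $A_i$'') silently assumes $a_i$ has a $p$-th root in $A_{i-1}$, which amounts to that hypothesis. In the only application of the lemma (the proof of Theorem~\ref{thm:DA}) one has $K = E_{sep}$ and $L = R'$, and $R'$ is assumed to contain a separable extension of $E$, hence $E$ itself; so the extra hypothesis holds there and the residue field is indeed $R'$. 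Your corrected formulation is the right one.
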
 
 \begin{proof} This is probably well known but I do not know a reference. Here is a proof.
  
When $E=K$, this is obvious. Assume $E\neq K$. Then, the characteristic of $K$ is positive, say $p$. There is a finite filtration from $K$ to $E$ by subfields $K=E_0\subset \ldots   \subset E_{i} \subset \ldots \subset E_n=E$ such that $E_i\simeq E_{i-1}[X]/ (X^p)$. By induction on $i$, we suppose that $A_{i-1}:=L\otimes_KE_{i-1}$ is an artinian local ring with residue field $L$. We show that $A_i$ has the same property. Clearly $A_i$ is an  artinian commutative ring, hence a finite product of local (artinian) rings \cite[Cor. 2.16]{Eis}. We have $A_i\simeq A_{i-1}[X]/(X^p)$. The only idempotents in $A_i$ are trivial (if $P(X)\in A_{i-1}[X]$ is unitary and satisfies $P(X)^2\equiv P(X)$ modulo $(X^p)$ then $P(X)=1$) hence $A_i$ is local. As $A_{i-1}$ is a quotient of $A_i$ and $L$ is a quotient of $A_{i-1}$, the field $L$ is a quotient of $A_i$.
\end{proof}
 \subsection{Proof of the decomposition theorem (Thm.\ref{thm:DA} and Cor. \ref{cor:DA})}
 
 \
Let  $V$ a simple $A$-module of commutant $D$ of finite dimension over $R$,  $\delta^2$ the dimension  of  $D$ over its center $E$. 

We recall   that  
a finite extension $E'/E$ splits $D$, i.e. $E'\otimes_ED\simeq M(\delta, E')$,  if and only if $E'$  is isomorphic to   a maximal subfield of a matrix algebra over $D$ \cite[\S 15, n$^o$3, Prop.5]{BkiA8}.   We recall also that $D$
contains  a maximal subfield, which a separable extension  $E'/E$ of degree $\delta$ \cite[7.24 Prop]{CR} or \cite[lo.cit. and \S 14, n${}^o$7]{BkiA8}.

Let 
 $R'/R$ be a normal extension containing a finite Galois extension  $E'/E$ splitting $D$ (for example an algebraic closure $R^{alg}$ of $R$).
  
 For $i\in \Hom_R(E,R')$,  $R'\otimes_{i,E} E$ is a quotient field of $E_{R'}$ isomorphic to $R'$ so
 $$\prod_{i\in \Hom_R(E,R')}R'\otimes_{i,E} E$$
is a semi-simple quotient of $E_{R'}$ of dimension $[E_{sep}:R]$. It is equal to $E_{R'}$ if and only if  $E/R$ is separable:  $[E:R]=[E_{sep}:R]$. By the same argument,  $$R'\otimes_RE_{sep} =\prod_{j\in \Hom_R(E_{sep},R' )}R' \otimes_{ j, E_{sep}} E_{sep}.$$ 
Recall from \S\ref{ss:at} the bijective map $i\mapsto j=i|_{E_{sep}}:  \Hom_R(E,R')\to  \Hom_R(E_{sep},R')=J$. Tensoring   by $ - \otimes_{E_{sep}}E$, we get  a product decomposition  $$E_{R'}\simeq \prod_{j\in J}R' \otimes_{ j, E_{sep}}  E.$$
inducing   decompositions 
$$D_{R'}\simeq \prod_{j\in J}R' \otimes_{ j, E_{sep}} D, 
\quad V_{R'} \simeq \oplus _{j\in J}\,R' \otimes_{ j, E_{sep}} V,$$
where  $R' \otimes_{ j, E_{sep}} D$ is the commutant of the $A_{R'}$-module $R' \otimes_{ j, E_{sep}} V$. 
As  $R'$ contains a Galois extension $E'/E$  splitting $D$, 
 $$ R' \otimes_{ j, E_{sep}} D \simeq M(\delta, R' \otimes_{ j, E_{sep}}  E), \quad R'\otimes_{j,E_{sep} }V \simeq \oplus^{\delta} V_{R',j},$$
for an  $A_{R'}$-submodule $V_{R',j}$ of commutant isomorphic to $R'\otimes_{j,E_{sep}}E$. 
The first isomorphism implies the second one \cite[\S 6 n${}^o$7, cor.2, p.103]{BkiA8}. 
 To prove the first isomorphism,
we choose  $g\in \Hom_R(E',R')$ extending $j$ and $i$  and  we compute:

\noindent $R' \otimes_{ j, E_{sep}} D \simeq R' \otimes_{ j, E_{sep}}  E\otimes_ED \simeq  
R'^G \otimes_R R'_{sep} i(E) \otimes_ED
 \simeq  R'^G \otimes_R R'_{sep} i(E) \otimes_{g,E'} E'\otimes_ED$
 
\noindent  $\simeq  R'^G \otimes_R R'_{sep} i(E) \otimes_{g,E'} M(\delta,E')
 \simeq  M(\delta, R'^G \otimes_R R'_{sep} i(E) \otimes_{g,E'} E')\simeq M(\delta, R' \otimes_{ j, E_{sep}}  E)$.

For any extension $L/R'$, we still have $\Hom_R(E,L)=\Hom_R(E,R')\simeq J$, 
   $$E_{L}\simeq \prod_{j\in J}L \otimes_{ j, E_{sep}}  E, \ 
D_{L}\simeq \prod_{j\in J}L\otimes_{ j, E_{sep}} D, \
\ V_{L} \simeq \oplus _{j\in J}\oplus^\delta \, V_{L,j},$$
with  $  V_{L,j} = ( V_{R',j})_L $ and $\End_{A_L} V_{L,j}\simeq L\otimes_{j,E_{sep}}E$.  

 By \S\ref{ss:at}, $L \otimes_{ j, E_{sep}}  E$  is an Artinian local ring.   We deduce that the $A_L$-module  $V_{L,j}$ of commutant $L \otimes_{ j, E_{sep}}  E$ is indecomposable \cite[\S 2 n$^o$3 Prop.4]{BkiA8} of length $[E:E_{sep}]$ and that  its  simple subquotients   are all isomorphic to
 $$ L \otimes_{(L\otimes_{j,E_{sep}}E)}V_{L,j}.$$ 
The decomposition  of $D_{L}$  shows that there are no non-zero $ A_{L}$-homomorphism between $V_{L,j}$ and $V_{L,j'}$ if $j\neq j'$ (also between 
the  simple subquotients of $V_{L,j}$ and of $V_{L,j'}$).
  
The $A_L$-module $ V_{L,j}=L\otimes_{R'}  V_{R',j}$ and the $A_{R'}$-module
 $  V_{R',j} $ have the same length,   the  scalar extension to $L$ of the $A_{R'}$-module  $ R' \otimes_{R'\otimes_{j,E_{sep}}E}V_{R',j}$ is simple. This being true for all $L$, the simple subquotients of $V_{R'}$ are absolutely simple. The same is true for their scalar extension to $L$.
 Taking $R'=E'$,   the  $A_L$-subquotients of $V_{L}$ are defined over $E'$.

An $R$-automorphism $g$ of $R'$ induces an $R$-isomorphism $r'\otimes v\mapsto g(r') \otimes v:  R' \otimes_{ j, E_{sep}} V\to R' \otimes_{g\circ  j, E_{sep}} V$ for $i\in J$.  
This action which corresponds to the transitive action of $G$ on  $J$,  induces a transitive action of $G$ on the set of isomorphism classes $[V_{R',j}]$ of   $V_{R',j}$ for $j\in J$.

So   the map $[V]\to \Aut_R(R')[V']$ where $V'$ is a simple subquotient of $V_{R'}$ is well defined. It is 
  injective because $V'$ seen as an $A$-module is $V$-isotypic, and it is
 surjective  by Lemma \ref{lem:sq'}.  This ends the proof  of Thm.\ref{thm:DA}. $\square$

  \begin{remark}  {\rm In Thm.\ref{thm:DA} we note that  the   subquotients of  $V_{R'}$  descend  to the  finite Galois extension $E'/R$. }
 \end{remark}

 We prove now Corollary \ref{cor:DA}. 
 
   We choose algebraic closures $L^{alg}\supset R^{alg}$ of $L\supset R$ containing a finite Galois extension $E'/R$ splitting $D$. 
 
(i)  The  length of $V_L$ is less or equal to the  length of $V_{L^{alg}}$ and the length of $V_{L^{alg}}$ is
 $ \delta [E:R]$.

 (ii) The length of $V_{E'}$ is $ \delta [E:R]$.   The commutant of any subquotient  $W$
   of $V_{E'}$ is contained in the commutant of $W$ seen as an $A$-module,  $W$ is $V$-isotypic of finite length as an $A$-module, and the dimension of the commutant $D$ of $V$ is finite. Hence the dimension of the commutant of $W$ is finite.  The scalar extension from $E'$ to $L^{alg}$ induces a lattice  isomorphism  $\mathcal L_{V_{E'}}\to \mathcal L_{V_{L^{alg}}}$. Any subquotient of $V_{L^{alg}}$ is the scalar extension $W_{L^{alg}}$ from $E'$  to $L^{alg}$ of some subquotient $W$ of $V_{E'}$. The dimension of the commutant of $W_{L^{alg}}$ is equal to the dimension of the commutant of $W$. The    scalar extension $W'_{L^{alg}}$ from $L$ to $L^{alg}$ of a subquotient $W'$ of $V_L$ is a subquotient of   $V_{L^{alg}}$.  The dimension of the commutant of $W'$ is equal to the dimension of the commutant of $W'_{L^{alg}}$, hence is finite.
      
 \subsection{Proof of the lattice theorems (Theorems  \ref{thm:latticeA}, \ref{thm:latticeA2}, Corollary \ref{cor:latticeA2})}\

  Motivated by   the parabolic induction and  the pro-$p$ Iwahori invariant functor   when $R$ is a field of characteristic $p$, we prove the lattice theorem (Theorem  \ref{thm:latticeA}) which generalizes  \cite[Lemma 3.11]{AHenV1} to the setting of:
  
  -  an adjunction   
   $(F,G,\eta, \epsilon)$  where $F : \mathcal C\to  \mathcal D$ is  a functor   between abelian categories  of right adjoint $G$, $\eta:\id\to G\circ F$ is the unit and $\epsilon:F\circ G\to \id$ is the counit.
   
-   $W$ is a finite length object of  $\mathcal C$ \cite[Ex. 8.20, p. 205]{KS}. 
 
\bigskip Familiar notions for modules extend to abelian categories.
\begin{monlem}\label{lem:inter}  The     partially ordered set $\mathcal L_W$ of  subobjects of $W$  is a lattice, i.e. for any pair of subobjects $X,X'$ of $W$, the common    subobjects of $X$ and of $X'$ have  a largest element $X\cap X'$  and the subobjects of $W$ with $X$ and $X'$ as subobjects have a smallest element $X+X'$. Writing  $X\oplus X'$ for the direct sum we have an exact  sequence 
$$0\to (X\cap X') \to (X \oplus X')\to X+X'\to 0.$$
\end{monlem}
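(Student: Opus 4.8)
The plan is to construct $X\cap X'$ and $X+X'$ from the standard finite-limit, finite-colimit and image structure present in any abelian category, and then to read off the exact sequence from the image factorization of a single morphism; the finite length of $W$ plays no role here. (We tacitly use, as holds in all the categories relevant to this paper, that the subobjects of $W$ form a set.) Recall that a subobject of $W$ is a monomorphism into $W$, and that $Y\le Y'$ in $\mathcal L_W$ means the mono $Y\hookrightarrow W$ factors, necessarily uniquely, through $Y'\hookrightarrow W$.

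For the meet, I would take $X\cap X'$ to be the pullback $X\times_W X'$ of $i_X:X\hookrightarrow W$ and $i_{X'}:X'\hookrightarrow W$, with projections $p:X\cap X'\to X$ and $p':X\cap X'\to X'$. A pullback of a monomorphism is a monomorphism, so $p$ is mono and $i_X\circ p$ realizes $X\cap X'$ as a subobject of $W$ with $X\cap X'\le X$ and $X\cap X'\le X'$. If $Y\le X$ and $Y\le X'$ in $\mathcal L_W$, the two factorizations $Y\to X$ and $Y\to X'$ coincide after composition with $i_X$ and $i_{X'}$, hence form a cone over the cospan $X\to W\leftarrow X'$ and yield a map $Y\to X\cap X'$ compatible with the monos to $W$; composing it with $p$ recovers the mono $Y\to X$, so $Y\to X\cap X'$ is itself a monomorphism, i.e. $Y\le X\cap X'$. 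Thus $X\cap X'$ is the largest common subobject.

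For the join, I would form the biproduct $X\oplus X'$ with projections $q_1:X\oplus X'\to X$, $q_2:X\oplus X'\to X'$, take $\phi=i_X\circ q_1-i_{X'}\circ q_2:X\oplus X'\to W$, and put $X+X':=\Ima\phi$ with its canonical monomorphism into $W$. Precomposing $\phi$ with the coprojections shows that $X\le X+X'$ and $X'\le X+X'$. If $Z\le W$ satisfies $X\le Z$ and $X'\le Z$, the universal property of the biproduct assembles the maps $X\to Z$ and $X'\to Z$ into a map $X\oplus X'\to Z$ through which $\phi$ factors; since $\Ima\phi$ is the smallest subobject of $W$ through which $\phi$ factors, $X+X'\le Z$. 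Hence $X+X'$ is the least subobject of $W$ containing both, and $\mathcal L_W$ is a lattice. For the exact sequence, $\Ker\phi$ is the equalizer of $i_X\circ q_1$ and $i_{X'}\circ q_2$, which is canonically the pullback $X\times_W X'=X\cap X'$, the inclusion $X\cap X'\hookrightarrow X\oplus X'$ being $(p,p')$. Combining this with the image factorization $X\oplus X'\twoheadrightarrow\operatorname{Coim}\phi\xrightarrow{\ \sim\ }\Ima\phi\hookrightarrow W$ valid in an abelian category, together with $\operatorname{Coim}\phi=(X\oplus X')/\Ker\phi$, yields the exact sequence
$$0\to X\cap X'\to X\oplus X'\to X+X'\to 0 .$$

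There is no genuine obstacle; the only thing to watch is the bookkeeping: every time a universal property produces a comparison map one must check that it is compatible with the structure morphisms to $W$ --- so that the resulting inequality lives in $\mathcal L_W$ rather than merely between abstract objects --- and that it is a monomorphism; and one must use the identification $\Ker\phi=X\cap X'$. Everything else reduces to finite (co)completeness of the category and the canonical isomorphism $\operatorname{Coim}\xrightarrow{\ \sim\ }\Ima$ for morphisms in an abelian category.
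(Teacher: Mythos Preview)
Your proof is correct and carries out in full the standard abelian-category argument that the paper merely cites from \cite{KS}. The only remark is that the paper gives no argument of its own here, so there is nothing to compare beyond noting that you have supplied the details behind the reference.
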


\begin{proof}  \cite[Prop. 2.2.4, II.5.4 Axiom 3, Notations 8.3.10, II.5.Ex. 8.20]{KS} \end{proof}

We prove now  Theorem \ref{thm:latticeA}.
 \begin{proof} Step 1.  As $G$ is left exact, it defines a map  of ordered sets   $  \mathcal L_{F(W)} \xrightarrow{G}  \mathcal L_{(G\circ F)(W)}$.  If  $Y_1\subset Y_2 \subset W$ and $X$ is the kernel of $ F(Y_1)\to F(Y_2)$, then $ G(X)$ is the kernel of $(G\circ F)(Y_1)\to (G\circ F)(Y_2)$. By a) $\eta_W$ is an isomorphism, so $\eta_Y$ is an isomorphism for all $Y\subset W$. So  $G(X)=0$. By b)  $X=0$ so 
 $F$ defines a map  $ \mathcal L_W\xrightarrow{F} \mathcal L_{F(W)} $.

The  composite  map $ \mathcal L_W\xrightarrow{F} \mathcal L_{F(W)} \xrightarrow{  G} \mathcal L_{(G\circ F)(W)}$ is an isomorphism by a).  So $ \mathcal L_W\xrightarrow{F} \mathcal L_{F(W)}$ is  injective.  
 
Step 2.  We prove Step 1 and c) implies b') and c').  By  Step 1  the image  of a Jordan-Holder sequence of $W$ by $F$ has length $\geq \ell(W)$. It has length $\ell(W)$ and is a Jordan-Holder sequence of $F(W)$ if and only if $F(Y)$ is simple for any irreducible subquotient $Y$ of $W$, i.e. c) holds.
Then  $ G(X)$ is simple  for any simple   subquotient $X$ of $F(W)$ by  a).  So Step 1 and c) imply b') and c').

We prove  in 
  Steps 3, 4, 5 that b') and c') imply   the injectivity of
 $\mathcal L_{F(W)} \xrightarrow{G} \mathcal L_{(G\circ F)(W)}$, therefore  $ \mathcal L_W\xrightarrow{F} \mathcal L_{F(W)}$ is a lattice isomorphism of inverse  $\mathcal L_{F(W)} \xrightarrow{\eta^{-1}\circ G} \mathcal L_W$. 

Step 3. Let $X$ be a non-zero subquotient of $ F(W)$ of length $\lg(X)$. We prove by induction on the  length   that b') implies $\lg( G(X))\leq \lg(X)$.   Let  $X\to X''$   a simple quotient of kernel $X'$.  Then $G(X')$ is the kernel of $G(X) \to G(X'')$ as $G$ is left exact. By c')  $G(X'')$ is simple and  by induction on the length, we get $\lg( G(X))\leq \lg( G(X'))+1\leq  \lg(X')+1=\lg(X)$.

Step 4. If  $X$ is a non-zero  subobject of $F(W)$, we prove that c') and Step 3 imply $\lg( G(X))= \lg(X)$. Seeing $X$ as the kernel of  the quotient map
$ F(W) \to F( W)/X $, $G(X)$ is the kernel of the quotient map  $(G\circ F) (W) \to G ( F( W)/X)$ by left exactness of $G$.  By Step 3,  $\lg(G(X))\leq \lg(X)$   and $\lg(G(F( W)/X))\leq \lg( F(W)/X)$. By c'), $\ell (W)= \ell (F(W)) $. So $\lg( G(X))= \lg(X)$ and  $\lg(G(F( W)/X))=\lg( F(W)/X)$.

Step 5.  Let $X,X'$ be  subobjects of  $F(W)$ such that $G(X)= G (X')$.  We show that Step 4 implies $X=X'$. A functor between abelian categories commutes with finite direct sums \cite[II.5. Axiom A3]{KS} and a right adjoint functor is  left exact \cite[II.6.20 p.137]{KS}. 
Applying $G$ to the exact sequence of Lemma \ref{lem:inter}, $G(X\cap X')$ is  the kernel of $G(X\oplus X')= G( X)\oplus  G (X')\to  G(X+X')$.  By Step 4 and  length count, the last map is surjective. But then $ G(X+X')= G(X)+  G( X')$. So $ G(X+X')= G(X)= G(X')$ and by  length preservation $X+X'= X=X'$.   

Step 6. We showed that the properties a), b) and c)  imply the properties a), b') and c'). Conversely, assume  the properties a), b') and c'). As $G$ is left exact and $F(W)$ has finite length by c'), b') implies that $G(X)\neq 0$ for any non-zero object of $F(W)$, hence b') and c') imply b). We showed that  a), b), b'), c')  imply
 that $ \mathcal L_W\xrightarrow{F} \mathcal L_{F(W)}$ is a lattice isomorphism, and in particular c).  Therefore, the properties a), b) and c) hold true.

Step 7. Assume  that $W$  satisfies a), b) and c), or equivalently a), b') and c') by Step 6. Clearly, a subobject of 
 $ W$ has finite length satisfies a), b) and c) and a quotient  $W\to W' $ has finite length and satisfies a), c); it satisfies also    b') as $F(W')$   is a  quotient of $F(W)$ and has finite length. As b') implies b), $W'$ satisfies a), b), c). \end{proof}

\noindent {\bf  Proof of Theorem \ref{thm:latticeA2}}  
 The first assertions of the theorem are \cite[\S4 n$^o$4 Prop. 3,  n${}^o$4 Thm.  2]{BkiA8}. If $Y$ is stable by $b_W$, it is clear that $Y\otimes_R V$ is stable by $b_W \otimes b_V$. Conversely, assume that $Y\otimes_R V$ is stable by $b_W \otimes b_V$; then for $y\in Y$ and $v\in V$,   $b_Wy \otimes b_Vv$ belongs to $W\otimes_RV$. Applying an $R$-linear form $\lambda$ on $V$ we see that $\lambda( b_Vv) b_Wy$ belongs to $Y$. If $b_V \neq 0$  we can choose $v\in V$ and $\lambda$ such that $\lambda( b_Vv)\neq 0$ and then  $b_Wy\in Y$.
 
 \bigskip 
 
\noindent {\bf  Proof of  Corollary \ref{cor:latticeA2}}  
 We apply   Theorem \ref{thm:latticeA2} with the endomorphisms $b_W$ and $b_V$ attached to elements $b$ of $B'$; if $b\in B$, $b_W=\Id_W$ so any $Y$ is stable by $b_W$ and if $b\in B'-B$ then $b_V\neq 0$ by hypothesis. The assertions of $A'$-linearity are straightforward to check on the action of $B'$.

\begin{remark}\label{rem:latticesq}{\rm Let  $F : \mathcal C\to  \mathcal D$ be  a functor   between abelian categories and  $W$   a finite length object of  $\mathcal C$  satisfying:
$$X \mapsto F(X):\mathcal L_W\to \mathcal L_{F(W)} \quad \text{is a lattice isomorphism}. $$
Then any subquotient  of $W$ satisfies the same property. Indeed, this is clear for a submodule $W'$ of $W$, and  $\ell(W')=\ell(F(W'))$. For any exact sequence $0\to W_1\to W_2 \to W_3\to 0$ in  $\mathcal C$ with $ W_2$ a subobject of $W$, the sequence  $0\to F(W_1)\to F(W_2) \to F(W_3)\to 0$ in  $\mathcal D$ is exact  by length count. Let $\mathcal L_{W_2}(W_1)$ denote the lattice of subobjects $Y$ of $W_2$ containing $W_1$.
The map $ Y \mapsto F(Y):\mathcal L_{W_2}(W_1)\to \mathcal L_{F(W_2)}(F(W_1)) $ is a lattice isomorphism. Taking the cokernels, it corresponds to a lattice isomorphism
 $Z \mapsto F(Z):\mathcal L_{W_3}\to \mathcal L_{F(W _3)} $.
 }\end{remark}
 
  \begin{remark}\label{rem:ss3}{\rm  \cite[Prop. 2.4]{Vigadjoint},  \cite[Thm. 8.5.8]{KS}:

  For any adjunction $(F,G,\eta,\epsilon)$ between two categories, 

-   $F$ is fully faithful if and only if the unit $\eta$ is an isomorphism,

-  $G$ is fully faithful if and only if the counit $\epsilon $ is an isomorphism,

-  the following equivalent  properties   imply that $F,G$ are quasi-inverse of each other:
   
      \quad  -  $F$ and $G$ are fully faithful,

\quad  -   $F$ is an equivalence,

\quad -  $G$  is an equivalence.

 }\end{remark}

 \section{Classification theorem for $G$}\label{S:3}

   \subsection{Admissibility,  $K$-invariant,    and scalar extension}\label{s:ira}

 \
 
 In this section \ref{S:3}, $R$ is any field and $G$ is a  locally profinite group.  An $R[G]$-module $\pi$  is smooth if $\pi= \cup_K \pi^K$ with $K$ running through the  open compact subgroups  of $G$, and  is admissible if it  is smooth and $\dim_R\pi^K$ is finite for all $K$. Note that $\End_{R[G]} \pi  \subset \End_R \pi^K$ if $\pi^K$ generates $\pi$ so that $\dim_R\End_{R[G]} \pi $ is finite if $\dim_R\pi^K$ is finite.

 The  category  $\Mod_R(G)$ of $R[G]$-modules and the subcategory $\Mod_R^\infty(G)$  of smooth $R[G]$-modules are abelian, but not  the additive subcategory $\Mod_R(G)^a$ of  admissible $R[G]$-modules.   
  The subcategory  $\Mod^K_R(G)$  of   $R[G]$-modules $\pi$ generated by $\pi^K$ is additive with  a  generator $R[K\backslash G]$ is not abelian in general  \footnote{If $\Mod^K_R(G)$ is abelian and $G$ second countable, $\Mod^K_R(G)$ is a Grothendieck category (same proof  than for $\Mod_R(G)$ \cite[lemma 3.2]{Vigadjoint})}.  The  commutant  of $R[K\backslash G]$ is the  Hecke $R$-algebra
 $$\End_{ R[G]}  R[K\backslash G] \simeq R\otimes_{\mathbb Z} H(G,K)=H(G,K)_R$$ (no index if  $R=\mathbb Z$). We have the abelian category $\Mod_R(H(G,K) )$ of 
   right $H(G,K)_R$-modules ($H(G,K)$-modules over $R$).   The functor
 $$\mathcal T:=- \otimes_{H(G,K)} \mathbb Z[K\backslash G]:\Mod_R(H(G,K)) \to \Mod_R (G)$$
of image $\Mod^K_R(G)$   is left adjoint to the   $K$-invariant functor
$(-)^K$. 

The unit  $\epsilon: \id_{\Mod_R(H(G,K))}\to (-)^K \circ \mathcal T$  and the counit   $\eta:\mathcal T \circ (-)^K\to \id_{\Mod^K_R(G)}$ of the adjunction correspond to  the natural maps $ \cx \xrightarrow{\epsilon_\cx} \mathcal T(\cx)^K, \epsilon_\cx(x)=x\otimes 1$   for   $x\in \cx\in \Mod_R(H(G,K))$ and $\mathcal T(\pi^K)\xrightarrow{\eta_\pi} \pi, \eta_\pi(v\otimes Kg)=gv $ for $g\in G, v\in \pi^K, \pi \in \Mod_R(G)$.

\begin{monlem} \label{lem:admK}(i) If $\pi$ is admissible and simple,  then  $\dim_R \End_{R[G]} \pi$ is finite. 

(i) Let $R'$ be an extension of $R$. 

 $\pi$ is admissible if and only if the scalar extension $\pi_{R'}$ of $\pi$ from $R$ to $R'$ is admissible.  

The  adjoint functors 
$\mathcal T$, $(-)^K$,  the unit $\eta$ and the counit $\epsilon$ commute with   scalar extension from $R$ to $R'$:
 $\mathcal T(\cx)_{R'}\simeq \mathcal T(\cx_{R'}), \ (\pi^K)_{R'}\simeq (\pi_{R'})^K,  \ (\eta_{\pi})_{R'}\simeq \eta_{\pi_{R'}}, \ (\epsilon_{\cx})_{R'}\simeq \epsilon_{\cx_{R'}}.$ 
 \end{monlem}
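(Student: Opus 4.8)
The plan is to treat the three statements in order, exploiting throughout the basic properties of scalar extension recalled in \S\ref{S:21}, namely that $R'\otimes_R-$ is faithful and exact and that it commutes with finite colimits (being a left adjoint) and, on finite-dimensional vector spaces, with $\Hom$ (Remark \ref{rem:2}).

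\textbf{Part (i).} If $\pi$ is admissible and simple, pick an open compact subgroup $K$ with $\pi^K\neq 0$; since $\pi$ is simple and $\pi^K$ is $G$-stable-generating (any nonzero $R[G]$-submodule containing a nonzero $K$-fixed vector is all of $\pi$), $\pi$ is generated by $\pi^K$. By the observation made just before the lemma, $\End_{R[G]}\pi\hookrightarrow\End_R\pi^K$, and $\dim_R\pi^K<\infty$ by admissibility; hence $\dim_R\End_{R[G]}\pi<\infty$.

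\textbf{Part (ii), admissibility.} For each open compact $K$, the natural map $(\pi^K)_{R'}\to(\pi_{R'})^K$ is an isomorphism: $K$-invariants are the kernel of the finite family of maps $v\mapsto kv-v$ (more precisely $v\mapsto \rho(k)v-v$ for $k$ ranging over a set of coset representatives modulo some normal open subgroup through which the action on a given vector factors), i.e.\ a finite limit, and scalar extension by the flat (indeed free) module $R'$ commutes with finite limits of $R$-modules; alternatively one writes $\pi^K$ as the image of the idempotent-like averaging and uses exactness. This gives $(\pi^K)_{R'}\simeq(\pi_{R'})^K$, so $\dim_R\pi^K=\dim_{R'}(\pi_{R'})^K$, and one direction of the smoothness statement ($\pi$ smooth $\Rightarrow$ $\pi=\bigcup_K\pi^K$ $\Rightarrow$ $\pi_{R'}=\bigcup_K(\pi_{R'})^K$, using that $-\otimes_RR'$ commutes with the directed colimit) together with finiteness of dimensions yields: $\pi$ admissible $\Rightarrow$ $\pi_{R'}$ admissible. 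For the converse, faithful flatness of $R'/R$ gives that $\pi=\bigcup_K\pi^K$ as soon as $\pi_{R'}=\bigcup_K(\pi_{R'})^K$ (a submodule $M\subset\pi$ with $M_{R'}=\pi_{R'}$ must equal $\pi$), and $\dim_R\pi^K\le\dim_{R'}(\pi_{R'})^K<\infty$; so $\pi_{R'}$ admissible $\Rightarrow$ $\pi$ admissible.

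\textbf{Part (ii), functors and (co)units.} The isomorphism $(\pi^K)_{R'}\simeq(\pi_{R'})^K$ is exactly commutation of $(-)^K$ with scalar extension. For $\mathcal T$: since $\mathcal T(\cx)=\cx\otimes_{H(G,K)}\mathbb Z[K\backslash G]$ and $H(G,K)_{R'}=H(G,K)\otimes_{\mathbb Z}R'$, one has the associativity isomorphism $(\cx\otimes_{H(G,K)}\mathbb Z[K\backslash G])\otimes_RR'\simeq(\cx\otimes_RR')\otimes_{H(G,K)_{R'}}R'[K\backslash G]$, which is $\mathcal T(\cx)_{R'}\simeq\mathcal T(\cx_{R'})$. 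Then $(\eta_\pi)_{R'}\simeq\eta_{\pi_{R'}}$ and $(\epsilon_\cx)_{R'}\simeq\epsilon_{\cx_{R'}}$ follow by tracing through the explicit formulas recalled before the lemma ($\eta_\pi(v\otimes Kg)=gv$, $\epsilon_\cx(x)=x\otimes 1$), which are manifestly preserved under $-\otimes_RR'$; alternatively one invokes the uniqueness of the unit and counit of an adjunction, the adjunction $(\mathcal T,(-)^K)$ being obtained from the one over $\mathbb Z$ by base change.

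\textbf{Main obstacle.} Nothing here is deep; the only point requiring a little care is the commutation $(\pi^K)_{R'}\simeq(\pi_{R'})^K$ for a possibly infinite-dimensional smooth $\pi$, where one cannot directly cite the finite-dimensional $\Hom$-commutation of Remark \ref{rem:2}. The clean way is to note that $\pi^K=\varinjlik_{K'\trianglelefteq K\text{ open}}\pi^{K'}{}^{K/K'}$ reduces to invariants of a \emph{finite} group $K/K'$ acting on $\pi^{K'}$, and that invariants under a finite group form a finite limit which the exact functor $-\otimes_RR'$ preserves; passing to the colimit over $K'$ (preserved since $-\otimes_RR'$ is a left adjoint) finishes it. Once this is in place, everything else is bookkeeping with base change of tensor products and adjunctions.
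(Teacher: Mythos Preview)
The paper's own proof is the single word ``Clear.''  Your write-up is a correct and reasonable unpacking of what the authors regard as obvious, and follows essentially the only possible route.

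One small point deserves attention.  Your ``Main obstacle'' argument for $(\pi^K)_{R'}\simeq(\pi_{R'})^K$ presupposes that $\pi$ is smooth (you use $\pi=\varinjlim_{K'}\pi^{K'}$), yet in the converse direction of the admissibility equivalence you invoke this isomorphism before knowing $\pi$ is smooth: to get $M_{R'}=\pi_{R'}$ for $M=\bigcup_K\pi^K$ you need $\bigcup_K(\pi^K)_{R'}=\bigcup_K(\pi_{R'})^K$, hence the full isomorphism and not just the easy inclusion.  The cleanest fix, which also simplifies the forward direction, is to bypass the colimit manoeuvre entirely: choose an $R$-basis $(e_i)$ of $R'$, so that $\pi_{R'}=\bigoplus_i \pi\, e_i$ as $R[G]$-modules; since $K$ acts diagonally and $K$-invariants commute with arbitrary direct sums, $(\pi_{R'})^K=\bigoplus_i \pi^K e_i=(\pi^K)_{R'}$.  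This holds for any $R[G]$-module $\pi$, smooth or not, and renders the rest of your argument watertight.  (Alternatively, for the converse one can argue directly: if $1\otimes v$ is $K$-fixed in $\pi_{R'}$ then $1\otimes(kv-v)=0$, and injectivity of $\pi\hookrightarrow\pi_{R'}$ gives $v\in\pi^K$.)
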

 
  \begin{proof} Clear.
  \end{proof}
We deduce that if the unit (resp. counit) of the adjunction  is an automorphism of  $\Mod_R(H(G,K))$ (resp. $\Mod^K_R(G)$), it is an automorphism for any subfield of $R$.  
Recalling  Remark \ref{rem:ss3}:

 \begin{monlem}\label{lem:eqK} If the  functor $( -)^K:\Mod _{R}^K(G)\to \Mod_{R}(H(G,K))$ over $R$
   is an equivalence, then it   is an equivalence for  any subfield of $R$.
\end{monlem}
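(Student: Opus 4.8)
The plan is to deduce the lemma formally from the compatibility of the adjunction $(\mathcal T, (-)^K, \epsilon, \eta)$ with scalar extension (Lemma \ref{lem:admK}(ii)) together with Remark \ref{rem:ss3}. First I would record, using Remark \ref{rem:ss3} applied to the adjunction with left adjoint $\mathcal T$ and right adjoint $(-)^K$, that over any field the functor $(-)^K$ is an equivalence if and only if both the unit $\epsilon$ and the counit $\eta$ are natural isomorphisms; when that holds $\mathcal T$ is a quasi-inverse. Thus the lemma reduces to the claim: if $\epsilon$ (resp.\ $\eta$) is an isomorphism over $R$, then it is an isomorphism over every subfield $R_0$ of $R$.

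To prove this reduction I would use that the scalar extension $(-)_R = R \otimes_{R_0} -$ is faithful and exact (recalled at the beginning of \S\ref{S:21}), and hence reflects isomorphisms: if $f$ is a morphism of $R_0$-modules with $f_R$ an isomorphism, then $(\Ker f)_R$ and $(\operatorname{Coker} f)_R$ vanish by exactness, so $\Ker f = \operatorname{Coker} f = 0$ by faithfulness. Now for $\cx \in \Mod_{R_0}(H(G,K))$, Lemma \ref{lem:admK}(ii) identifies $(\epsilon_\cx)_R$ with $\epsilon_{\cx_R}$, which is an isomorphism by hypothesis; reflecting isomorphisms gives that $\epsilon_\cx$ is an isomorphism. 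Likewise, for $\pi \in \Mod_{R_0}^K(G)$ I would first check that $\pi_R$ still lies in $\Mod_R^K(G)$ --- the only point that is not purely formal, and it is immediate since $(-)_R$ is exact and commutes with $(-)^K$, so the subobject of $\pi_R$ generated by $(\pi_R)^K \simeq (\pi^K)_R$ is the scalar extension of the subobject of $\pi$ generated by $\pi^K$, i.e.\ all of $\pi_R$. Then Lemma \ref{lem:admK}(ii) identifies $(\eta_\pi)_R$ with $\eta_{\pi_R}$, an isomorphism by hypothesis, so $\eta_\pi$ is an isomorphism.

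Finally I would invoke Remark \ref{rem:ss3} again, this time over $R_0$: since both $\epsilon$ and $\eta$ are isomorphisms over $R_0$, the functor $(-)^K$ over $R_0$ is an equivalence, with quasi-inverse $\mathcal T$. I do not anticipate any real obstacle, the argument being entirely formal once Lemma \ref{lem:admK}(ii) is granted; the single verification requiring a sentence is the stability of $\Mod_R^K(G)$ under scalar extension.
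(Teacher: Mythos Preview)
Your proof is correct and follows exactly the approach the paper indicates: the paper's argument is the single sentence preceding the lemma (``if the unit (resp. counit) of the adjunction is an automorphism \ldots, it is an automorphism for any subfield of $R$'') together with the reference to Remark~\ref{rem:ss3}, and you have simply fleshed out the details---faithful exactness of scalar extension to reflect isomorphisms, and the stability of $\Mod^K(G)$ under scalar extension. Nothing is missing or different.
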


\begin{remark}{\rm Assume that  $R$ is a field of characteristic $p$.  
   When $K$ is a pro-$p$-Iwahori subgroup the functor $(-)^K$ of Lemma \ref{lem:eqK} is an equivalence  if $G=GL(2,\mathbb Q_p)$ and $p\neq 2$, or if $G=SL(2,\mathbb Q_p)$.

Indeed,  for $GL(2,\mathbb Q_p)$ this is  proved  with the extra-hypothesis that  $R$ contains a $(p-1)$-th root of $1$ (\cite{O} plus  \cite{K}), that we can remove with Lemma \ref{lem:eqK}.
For $G=SL(2,\mathbb Q_p)$, see  \cite[Prop. 3.25]{OS}.}
  \end{remark}

 \subsection{Decomposition Theorem for $G$} 
 
 \
 
Let $G$ be a   locally profinite group and let $R$ be a field.   For   any  irreducible admissible $R$-representation $\pi$ of $G$, its  commutant   $D=\End_{R[G]}\pi$ has  finite dimension (Lemma \ref{lem:admK} (ii)), and for any extension $R'/R$, $\pi_{R'}$ is admissible (loc. cit.). Let $R^{alg}$ be an algebraic closure of $R$.

 \begin{montheo}\label{thm:DG}    a)  Let $\pi$ be an irreducible admissible $R$-representation $\pi$ of $G$, $D$ its commutant, $E$  the center of $D$ and $\delta$ the reduced degree of $D$ over $E$. Then $\pi_{R^{alg}} $ has length $\delta [E:R]$,  its simple subquotients are isomorphic to submodules; they are admissible, descend to a finite extension of $R$,  their commutant is $R^{alg}$ and their isomorphism classes form a single orbit under $\Aut_R(R^{alg})$.
 
 b) The map which to $\pi$ as above associates the $\Aut_R(R^{alg})$-orbit of the irreducible subquotients  of  $\pi_{R^{alg}} $  is a bijection from the set of (isomorphism classes of)  irreducible admissible $R$-representations of $G$ onto  the set of $\Aut_R(R^{alg})$-orbit of (isomorphism classes of)  irreducible admissible $R^{alg}$-representations of $G$ descending to some finite extension of $R$.
 
 \end{montheo}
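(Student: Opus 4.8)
The plan is to deduce Theorem~\ref{thm:DG} directly from the Decomposition Theorem (Thm.~\ref{thm:DA}) applied to the right choices of ring and module, using the group-theoretic facts already established in Section~\ref{s:ira}. Concretely, I would take $A = R[G]$ and $V = \pi$. The hypothesis of Thm.~\ref{thm:DA} requires that $V$ be a simple $A$-module whose commutant $D = \End_A V$ is finite-dimensional over $R$: simplicity is just irreducibility of $\pi$, and finite-dimensionality of $D$ is exactly Lemma~\ref{lem:admK}(i). One also needs an algebraically closed field $R^{alg}$ to serve as the normal extension $R'$; since $R^{alg}$ contains every finite separable extension of $E$, in particular one splitting the division algebra $D$, the standing hypotheses of Thm.~\ref{thm:DA} are met with $R' = R^{alg}$.

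For part a), I would then simply read off the conclusions of Thm.~\ref{thm:DA} in this setting. The theorem gives that $\pi_{R^{alg}} = V_{R^{alg}}$ has length $\delta[E:R]$, that each indecomposable summand $V_{R^{alg},j}$ has simple subquotients all isomorphic to a single absolutely simple $A_{R^{alg}}$-module with commutant $R^{alg}$, that these subquotients descend to the finite Galois extension $E'/R$ (hence to a finite extension of $R$), and that their isomorphism classes form one $\Aut_R(R^{alg})$-orbit. The two points not literally contained in Thm.~\ref{thm:DA} and requiring a short separate argument are: (1) the simple subquotients of $\pi_{R^{alg}}$ are isomorphic to \emph{submodules} of $\pi_{R^{alg}}$ --- here I would invoke property (P1) from \S\ref{S:21}, namely that $\pi_{R^{alg}}$ is $\pi$-isotypic as an $R[G]$-module, so (being of finite length as an $R[G]$-module, again by finiteness of $D$ via (P5)) it is semisimple as an $R[G]$-module, and therefore as an $R^{alg}[G]$-module each indecomposable summand is actually semisimple too... more carefully, since $\mathcal L_{\pi_{R^{alg}}}$ as $R^{alg}[G]$-module is the lattice of right ideals of $D_{R^{alg}}$ (property (P2)), and $D_{R^{alg}} \simeq \prod_j M(\delta, R^{alg}\otimes_{j,E_{sep}}E)$, the simple quotients of the minimal right ideals are exactly the simple submodules, and these exhaust the simple subquotients up to isomorphism because each local factor $R^{alg}\otimes_{j,E_{sep}}E$ has a unique simple module; (2) admissibility of the subquotients --- this follows from Lemma~\ref{lem:admK}(ii), since $\pi_{R^{alg}}$ is admissible (being the scalar extension of the admissible $\pi$), and a subquotient of a smooth admissible representation is admissible as $\Mod_R^\infty(G)$ is abelian and the $K$-invariants of a subquotient inject into a subquotient of $\pi_{R^{alg}}^K$.

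For part b), I would transcribe the bijection statement of Thm.~\ref{thm:DA}. That theorem gives a bijection from isomorphism classes of simple $A$-modules with finite-dimensional commutant over $R$ onto $\Aut_R(R^{alg})$-orbits of isomorphism classes of simple $A_{R^{alg}}$-modules with finite-dimensional commutant over $R^{alg}$ that descend to a finite extension of $R$. The only translation needed is to identify ``simple $R[G]$-module with finite-dimensional commutant'' with ``irreducible admissible $R$-representation'' on the left, and similarly on the right. The forward direction of this identification is Lemma~\ref{lem:admK}(i). For the reverse --- that a simple $R[G]$-module appearing as a subquotient of some $\pi_{R^{alg}}$ is automatically admissible --- one uses that it descends to a finite extension $L/R$, and an $L[G]$-module that is finite length and whose scalar extension to $R^{alg}$ is a subquotient of $\pi_{R^{alg}}$ must be admissible because... again Lemma~\ref{lem:admK}(ii) and the fact that being a subquotient of an admissible module forces admissibility. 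I expect the main (minor) obstacle to be exactly this bookkeeping: matching the purely ring-theoretic hypotheses ``finite-dimensional commutant / descends to a finite extension'' in Thm.~\ref{thm:DA} with the representation-theoretic notion ``admissible'', and checking that passage to subquotients preserves admissibility in both directions; all of this is routine given Remark~\ref{rem:1}, Lemma~\ref{lem:admK}, and the abelianness of $\Mod_R^\infty(G)$, so no genuinely new idea is required.
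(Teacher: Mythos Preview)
Your proposal is correct and follows exactly the paper's approach: the paper's proof is the single sentence ``This is immediate from Theorem~\ref{thm:DA}, if we remark that a submodule of the admissible representation $\pi_{R^{alg}}$ is also admissible,'' and you have simply unpacked that sentence with more care (checking the hypothesis via Lemma~\ref{lem:admK}(i), noting that $R^{alg}$ satisfies the splitting condition, and reading off each conclusion). Your extra discussion of why simple subquotients are isomorphic to submodules and of the admissibility bookkeeping in part~b) is more detailed than what the paper provides, but none of it departs from the intended argument.
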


  This is immediate  from Theorem   \ref{thm:DA}, if we remark that a submodule of the admissible representation $\pi_{R^{alg}}$ is also admissible. Of course we could apply the more precise results of Theorem   \ref{thm:DA} for intermediate extensions $R'/R$  and  Corollary \ref{cor:DA} (use Lemma \ref{lem:admK}):

\begin{moncor}  Let $L$ be an extension of $R$ and $\pi$ as in Theorem \ref{thm:DG}. Then  $\pi_L$ has length $\leq \delta [E:R]$, its simple subquotients are admissible.
  \end{moncor}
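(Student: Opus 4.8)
The plan is to deduce the corollary directly from Theorem~\ref{thm:DA} together with Corollary~\ref{cor:DA}, using Lemma~\ref{lem:admK} to bridge between the purely algebraic statements and the representation-theoretic setting. First I would set $A=R[G]$ and $V=\pi$; since $\pi$ is irreducible admissible, Lemma~\ref{lem:admK}(i) tells us that $D=\End_{R[G]}\pi$ has finite dimension over $R$, so $\pi$ is a simple $A$-module whose commutant is a finite-dimensional division algebra over $R$. That is exactly the hypothesis under which Theorem~\ref{thm:DA} and Corollary~\ref{cor:DA} apply. Applying Corollary~\ref{cor:DA} with the extension $L/R$ immediately gives that the length of $\pi_L$ (as an $R[G]$-module) is $\leq\delta[E:R]$, and that the commutant over $L$ of any subquotient of $\pi_L$ is finite-dimensional.

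Next I would check that "simple subquotient" in the module-theoretic sense coincides with "irreducible smooth subrepresentation/quotient" in the representation-theoretic sense, and that these subquotients are admissible. The length statement is about $R[G]$-module length, but since $\pi$ is smooth, every submodule and subquotient of $\pi_L$ is smooth as well (smoothness passes to subobjects and quotients, and to scalar extension by Lemma~\ref{lem:admK}(ii)). For admissibility: by Lemma~\ref{lem:admK}(ii), $\pi_L$ is admissible because $\pi$ is; a subrepresentation of an admissible representation is admissible (the $K$-invariants of a subrepresentation inject into those of the ambient representation, hence stay finite-dimensional); and a quotient of a finite-length admissible representation is admissible because it appears as a subquotient and one can realize it via a sub of a quotient — more simply, in an exact sequence $0\to\pi'\to\pi_L\to\pi''\to 0$ the functor $(-)^K$ is left exact so $\dim(\pi'')^K\le$ something, but one should just note $(-)^K$ is exact on smooth representations when $K$ is a compact open subgroup and $\operatorname{char}R$ is arbitrary only if... — actually $(-)^K$ need not be exact in characteristic $p$, so I would instead argue: every simple subquotient of $\pi_L$ is isomorphic to a subrepresentation of $\pi_L$ (this is part of the conclusion of Theorem~\ref{thm:DA}/Theorem~\ref{thm:DG}~a) applied after further base change, or can be seen directly since $\pi_L$ as an $R[G]$-module is $\pi$-isotypic when viewed over $R$, so all its simple subquotients embed), and subrepresentations of admissible representations are admissible. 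Hence all simple subquotients of $\pi_L$ are admissible.

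The only genuinely delicate point is the assertion that simple subquotients embed as subrepresentations, so that admissibility of subquotients follows from admissibility of subrepresentations. I expect this to be the main (minor) obstacle: one wants to invoke that $\pi_L$, restricted to $R[G]$, is a direct sum of copies of $\pi$ by property (P1) of \S\ref{S:21} — but (P1) is for a \emph{simple} module over the base field, and here the base is $R$ while we look at $\pi_L$ over $R$; in fact $\pi_L$ over $R$ is a (possibly infinite) direct sum of copies of $\pi$ since $L$ is a direct sum of copies of $R$ as an $R$-vector space, so $\pi_L\cong\bigoplus_{i\in I}\pi$ as $R[G]$-modules. Therefore every simple subquotient of $\pi_L$, being a simple $R[G]$-module occurring in a $\pi$-isotypic module, is isomorphic to $\pi$ as an $R[G]$-module, hence is admissible as an $R[G]$-module, hence admissible as an $L[G]$-module by Lemma~\ref{lem:admK}(ii) (applied to the finite-dimensional-commutant situation, or directly: its $K$-invariants are finite-dimensional over $L$ since they are finite-dimensional over $R$, being a subspace of $(\pi_L)^K=(\pi^K)_L$, which... no, that gives finite over $R$ only if the multiplicity is finite). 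To be safe I would phrase the admissibility conclusion via: a simple subquotient $W$ of $\pi_L$ has $\dim_L W^K<\infty$ because $W^K\subset(\pi_L)^K\cong(\pi^K)_L$ which has finite $L$-dimension, and $W^K$ is an $L$-subspace. This last remark is clean and avoids the isotypic subtlety. I would write the proof in two sentences: apply Corollary~\ref{cor:DA} for the length bound, and note that any subquotient $W$ of $\pi_L$ satisfies $\dim_L W^K\le\dim_L(\pi_L)^K=\dim_R\pi^K<\infty$ for every compact open $K$, hence is admissible.
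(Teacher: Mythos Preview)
The length bound via Corollary~\ref{cor:DA} is fine and matches the paper (though note it is the $L[G]$-length, not the $R[G]$-length as you wrote). The admissibility argument, however, has a genuine gap.

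Your final claim that a subquotient $W$ of $\pi_L$ satisfies $\dim_L W^K\le\dim_L(\pi_L)^K$ is exactly the statement you yourself flagged as problematic two paragraphs earlier: $(-)^K$ is only left exact in characteristic $p$, so for a \emph{quotient} there is no inclusion $W^K\subset(\pi_L)^K$ and the dimension inequality can genuinely fail. (Already for a finite $p$-group $G$ with $\dim_{\mathbb F_p}G^{\mathrm{ab}}\ge 2$ one finds quotients $C$ of $B=R[G]$ with $\dim C^G>\dim B^G$, via $C^G\simeq H^1(G,R)$.) So the two-sentence proof you propose does not go through, and none of the alternative routes you sketched (isotypicity over $R$, or ``simple subquotients embed'') is brought to a conclusion.

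The paper fixes this by passing to algebraic closures $R^{alg}\subset L^{alg}$. Over $R^{alg}$, Theorem~\ref{thm:DG}\,a) says the simple subquotients of $\pi_{R^{alg}}$ are isomorphic to \emph{submodules}, hence admissible; since they are absolutely simple, scalar extension $R^{alg}\to L^{alg}$ induces a lattice bijection $\mathcal L_{\pi_{R^{alg}}}\to\mathcal L_{\pi_{L^{alg}}}$, so every subquotient of $\pi_{L^{alg}}$ is the scalar extension of one of $\pi_{R^{alg}}$ and therefore admissible. Now for a simple subquotient $\tau$ of $\pi_L$, its extension $\tau_{L^{alg}}$ is a subquotient of $\pi_{L^{alg}}$, hence admissible, and Lemma~\ref{lem:admK} pulls admissibility back to $\tau$. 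The missing idea in your attempt is precisely this detour through a field where one \emph{knows} simple subquotients embed as subobjects.
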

  \begin{proof}  Let $R^{alg} \subset L^{alg}$ be  algebraic closures of $R\subset L$.  The scalar extension of $\pi$ in $R^{alg}$ has length $\delta [E:R]$,  the irreducible subquotients of $\pi_{R^{alg}}$ are all absolutely irreducible (their commutant is $R^{alg}$). 
  Therefore  $\pi_L$ has length $\leq \delta [E:R]$.  Let $\tau$ be an   irreducible subquotient  of $\pi_L$. Then $\tau_{L^{alg}}$ is a subquotient of $\pi_{L^{alg}}$.
  All the subquotients of $\pi_{R^{alg}}$ are admissible, the same is true for those of  $\pi_{L^{alg}}$, hence  $\tau_{L^{alg}}$ is admissible. Applying Lemma \ref{lem:admK}, $\tau$ is admissible.
   \end{proof}
    
  \subsection{The representations  $I_G(P,\sigma,Q)$}\label{S:II.1} 

\ 

 From now on  $G$ is a $p$-adic reductive group and   $R$ is a field of characteristic $p$.  

As stated in the introduction, our base field $F$ is locally compact  non-archimedean of residue characteristic $p$. A linear algebraic group over $F$ is written with a boldface letter like $\bh$, and its group of $F$-points  by the corresponding ordinary letter $H=\bh(F) $. 
We fix an arbitrary connected reductive $F$-group $\bg$, a maximal $F$-split torus $\bf T$ in $\bg$ and a minimal $F$-parabolic subgroup $\bb$ of $\bg$  containing $\bf T$ ; we write $\bz$ for the centralizer of $\bf T$  in $\bg$ and $\bu$ for the unipotent radical of $\bb$, $\bg^{is}$ for the product of the isotropic simple components of the simply connected cover of the derived group of $\bg$; the image of $G^{is}$ in $G$ is the normal subgroup $G'$ of $G$ generated by $U$, and $G=ZG'$.  Let $\Phi^+$ denote the set of roots of $\bf T$ in $ \bu\cap \bm$ and  $\Delta\subset \Phi^+$ the set of simple  roots.

We  say that $P $ is a parabolic subgroup of $G$ and write $P=MN$ to mean that  $\bp$   is an $F$-parabolic subgroup of $\bg$  containing  $\bb$ , $\bm$ the Levi subgroup   containing $\bz$ and  $\bn$ the unipotent radical; so $\bp= \bm \bb= \bm \bn$, the parabolic subgroups $P$ of $G$ are in bijection $P\mapsto \Delta_P$ with the subsets   $\Delta$.  We have
 $G=M\langle ^GN\rangle$  for the normal subgroup   $\langle ^GN\rangle$ of $G$ generated by $N$.  
  For $J \subset \Delta$ we write  $P_J=M_JN_J$ for the corresponding  parabolic subgroup  such that $J= \Delta_{P_J}$;  for a singleton $J=\{\alpha\}$ we rather write $P_\alpha=M_\alpha N_\alpha$.   Set $P^{is}$ for the parabolic subgroup of $G^{is}$ of image $P\cap G'$ in $G$.  
  
  The smooth  parabolic induction $\Ind_{P }^G:\Mod_R^\infty(M)\to \Mod_R^\infty(G)$ is fully faithful, and admits  a right adjoint $R^G_P$ and a left adjoint $L_P^G$ \cite{Vigadjoint}. 
 
For a pair of  parabolic subgroups $Q\subset P$, write $\Ind_Q^M$ for $\Ind_{Q\cap   M}^M$  and consider   the Steinberg $R$-representation $\St_{Q}^{M}(R)$ of $M$,
 quotient of   $\Ind_{Q}^{M} (R) $   ($M\cap Q$ acts trivially on $R$)  by the sum $\sum_{Q'}\Ind_{Q'}^M(R) $, $Q' $ running through  the parabolic subgroups  of $G$ with  $ Q\subsetneq Q' \subset P$.  
   The $R$-representation $ \St_{Q}^M(R) $ of $M$  is irreducible and   admissible. 
   
   Writing $\St_Q^M=\St_Q^M(\mathbb Z) $, $\St_{Q}^{M}(R)\simeq R\otimes_{\mathbb Z} \St_Q^M$.   If 
   $P_2=M_2N_2$ is the parabolic subgroup corresponding to $ \Delta_P   \setminus \Delta_Q$, the inflation to   $M^{is}_{2}$ of the   restriction of $ \St_{Q}^M$   to $M'_2$ is $\St_{(Q\cap M_2)^{is}}^{M^{is}_2}(R)$   \cite[II.8 Proof of Proposition  and Remark]{AHHV}. This is true for all $R$ so $ \St_{Q}^M(R) $ as an $R$-representation of $M'_2$  is absolutely irreducible.   
 
   To an $R$-representation $\sigma$ of $M$   are associated the following  parabolic subgroups of $G$:
 
 a)  $P_\sigma=M_\sigma N_\sigma $ corresponding to    the set  $\Delta_\sigma$ of $\alpha\in \Delta\setminus {\Delta_M}$ such that  $Z\cap M_\alpha'$ acts trivially on $\sigma$.
 
 b) $P(\sigma)=M(\sigma)N(\sigma)$ corresponding to $\Delta(\sigma)=  \Delta_P\cup\Delta _\sigma$. There exists an extension  $e(\sigma) $ to $P(\sigma)$ of the inflation of $\sigma $ to $P$; it is trivial on $N(\sigma)$, write also $e(\sigma) $ for its restriction to $M(\sigma)$  \cite[II.7 Proposition and Remark 2]{AHHV}. For $P\subset Q \subset P(\sigma)$, the generalized Steinberg representation $ \St_{Q}^{M (\sigma)}(\sigma)$ of $M(\sigma)$  (\S\ref{I} \eqref{eq:IGst}) is  admissible, 
isomorphic to
$ e(\sigma)\otimes_\mathbb Z \St_{Q} .$ 

c)  $P_{min}=M_{min}N_{min}$ the minimal parabolic subgroup of $G$ contained in $P$   such that $\sigma$  extends  an $R$-representation $\sigma_{min}$  of $P_{min}$ trivial on $N_{min}$ \cite[Lemma 2.9]{AHenV1}, \cite[\S2.2]{AHenV2}. We have $\Delta(\sigma_{min})=\Delta(\sigma)$,   $e_Q(\sigma)=e_Q(\sigma_{min})$,  $\Delta_{\sigma_{min}}$ and $\Delta_{ \sigma_{min}}\setminus \Delta_{P_{min}}$ are orthogonal.
 So $M(\sigma)=M_{min} M'_{\sigma_{min}}$, $M_{min} $ normalizes $M'_{\sigma_{min}}$, and  $e(\sigma)$ is trivial on $M'_{\sigma_{min}}$.

  When $P(\sigma)=G$, write  $P_{min,2}=P_{\sigma_{min}}$ so $G=M_{min} M'_{min.2}$, $M_{min} $ normalizes $M'_{ min,2 }$. For a parabolic subgroup $Q$ of $G$ containing $Q$,  the action  of  $M'_2$  on $ e(\sigma)$ is trivial and is absolutely irreducible on $\St_Q^G(R)$.    
 
 \begin{madef}\label{def:es2}  An $R$-triple $ (P,\sigma,Q)$ of $G$ consists of   a parabolic subgroup $P=MN$of $G$,  a smooth $R$-representation $\sigma $ of $M$,     a parabolic subgroup  $Q$ of $G$ with  $P\subset Q\subset P(\sigma)$. 
 To an $R$-triple $ (P,\sigma,Q)$ of $G$ we attach the smooth  $R$-representation of $G$ $$I_G(P,\sigma, Q)=\Ind_{P (\sigma)}^G(  \St_{Q}^{M (\sigma)}(\sigma) ).$$
 \end{madef}

The representation $I_G(P,\sigma,Q)=I_G(P_{min},\sigma_{min}, Q)$  is admissible when  $\sigma$ is admissible \cite[Thm.4.21]{AHenV1}.

 \begin{maprop}\label{prop:tensorstG}Let  $ (P,\sigma,Q)$ be an $R$-triple of $G$  with $\sigma$ admissible of  finite length such that  for each  irreducible subquotient  $\tau$ of $\sigma$, $P(\sigma)=P(\tau)$ and $I_G(P,\tau,Q)$ is irreducible.  Then $P(\sigma)=P(\sigma')$ for any non-zero subrepresentation  $\sigma'$ of $\sigma$,  and   
 $ I_G(P,-,Q)$ induces a lattice isomorphism $\mathcal L_{\sigma}\to  \mathcal L_{I_G(P,\sigma,Q)}$.\end{maprop}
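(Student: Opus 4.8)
The plan is to apply the Lattice Isomorphism and Tensor Product theorem (Thm.~\ref{thm:latticeA2}) together with its Corollary~\ref{cor:latticeA2}, after setting up the right algebraic framework. First I would reduce to the case $P=P_{min}$, $\sigma=\sigma_{min}$, since $I_G(P,-,Q)=I_G(P_{min},-,Q)$ and the passage $\sigma\mapsto\sigma_{min}$ is an equivalence (it is the extension–restriction bijection between smooth $R$-representations of $M$ and of $P_{min}$ trivial on $N_{min}$), so it preserves the lattice of subrepresentations. Thus we may assume $\sigma$ extends to $P=P_{min}$ and, writing $M(\sigma)=M_{min}M'_{min,2}$ with $M_{min}$ normalizing $M'_{min,2}$ and $e(\sigma)$ trivial on $M'_{min,2}$, the representation $\St_Q^{M(\sigma)}(\sigma)\simeq e(\sigma)\otimes_{\mathbb Z}\St_Q$ is ``$\sigma$ on the $M_{min}$-side'' and ``$\St_Q^{M'_{min,2}}(R)$ on the other side''.

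Next I would unravel the claim $P(\sigma)=P(\sigma')$ for $0\neq\sigma'\subset\sigma$: since $\Delta(\sigma)$ is determined by which $Z\cap M'_\alpha$ act trivially, and this can only grow on a subrepresentation and can only shrink on a quotient, the hypothesis that $P(\sigma)=P(\tau)$ for every irreducible subquotient $\tau$ forces $\Delta(\sigma')=\Delta(\sigma)$ for every nonzero subquotient $\sigma'$. This is the input that makes $I_G(P,-,Q)$ an honest functor on $\mathcal L_\sigma$: every subrepresentation $\sigma'$ has the same $P(\sigma')$, hence the same $e(-)$ and the same $\St_Q^{M(\sigma)}(-)$ construction, and smooth parabolic induction $\Ind_{P(\sigma)}^G$ is exact and fully faithful.

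The heart of the argument is to recognize $\mathcal L_{\sigma}\to\mathcal L_{I_G(P,\sigma,Q)}$ as an instance of $Y\mapsto Y\otimes_R V$. Here I would take, via the above product decomposition, $V$ to be an absolutely irreducible representation built from $\St_{(Q\cap M_2)^{is}}^{M^{is}_2}(R)$ (which is absolutely irreducible as an $R$-representation of $M'_2$, as recalled in the excerpt) inflated and then parabolically induced, and use that $\Ind_{P(\sigma)}^G$ of a tensor product over the factor groups realizes $I_G(P,\sigma',Q)\simeq \sigma'\otimes_R V$ as objects in a suitable module category where $\sigma'$ contributes only the $M_{min}$-variable; the hypothesis ``$I_G(P,\tau,Q)$ irreducible for each irreducible subquotient $\tau$'' says precisely that $V$ is simple with commutant $R$ over the relevant algebra $A$ (acting trivially, i.e. as identity on the $\sigma$-factor) while a larger algebra $A'$ (the full group action) acts so that tensoring is $A'$-linear. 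Then Corollary~\ref{cor:latticeA2} gives the lattice isomorphism between $A'$-submodules, i.e. subrepresentations, of $\sigma$ and of $I_G(P,\sigma,Q)$; bijectivity of $\mathcal L_\sigma\to\mathcal L_{I_G(P,\sigma,Q)}$ is then formal, and it is automatically a lattice map (preserving $\cap$ and $+$) since $-\otimes_R V$ is exact.

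The main obstacle I anticipate is the bookkeeping needed to split the induction $I_G(P,\sigma',Q)=\Ind_{P(\sigma)}^G(e(\sigma')\otimes_{\mathbb Z}\St_Q)$ into a genuine tensor product $\sigma'\otimes_R V$ with the module-theoretic hypotheses of Corollary~\ref{cor:latticeA2} literally satisfied — in particular identifying the pair of algebras $A\subset A'$ with a basis $B\subset B'$ where $B$ acts as the identity on $V$ and $B'\setminus B$ acts invertibly, and checking that the $G$-action on $I_G(P,\sigma,Q)$ is exactly the tensor-product action of such a $B'$. This is essentially the content of the remark in the excerpt following Corollary~\ref{cor:latticeA2} about ``$A'\supset A$, $B'\supset B$, elements of $B'\setminus B$ act invertibly, $B$ acts as identity'', applied with $A'$ coming from $G$ (or from $M(\sigma)$ after induction, using exactness and full faithfulness of $\Ind_{P(\sigma)}^G$) and $V$ the generalized Steinberg factor; once that dictionary is in place, everything else is formal from Thm.~\ref{thm:latticeA2}. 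A secondary point to handle carefully is that finite length of $\sigma$ is needed to guarantee that $V$ is simple over $A$ rather than merely isotypic, and to invoke the finite-length hypotheses in the lattice theorems; this is exactly why the statement assumes $\sigma$ admissible of finite length.
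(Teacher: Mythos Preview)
Your proposal has the right ingredients but the architecture is off, and this creates a genuine gap. The paper's proof proceeds in two distinct steps, and your attempt to collapse them into a single application of Corollary~\ref{cor:latticeA2} does not go through.

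\textbf{Where your argument breaks.} You want to write $I_G(P,\sigma',Q)\simeq \sigma'\otimes_R V$ with $V$ the parabolically induced Steinberg, and then apply Corollary~\ref{cor:latticeA2} at the level of $G$. But after applying $\Ind_{P(\sigma)}^G$ there is no natural pair $A\subset A'=R[G]$ with basis $B\subset B'$ satisfying the hypotheses of that corollary: the $G$-action on the induced representation does not factor as a tensor action of a basis, so the dictionary you describe as ``the main obstacle'' cannot be set up. Your claim that ``the hypothesis $I_G(P,\tau,Q)$ irreducible says precisely that $V$ is simple with commutant $R$'' is backwards: in Theorem~\ref{thm:latticeA2} the simplicity of $V$ with commutant $R$ is an \emph{input}, and it implies (not follows from) the simplicity of $\tau\otimes_R V$ for simple $\tau$. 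The commutant-$R$ condition actually comes for free from the absolute irreducibility of $\St_Q^{M(\sigma)}(R)$ as an $R[M'_\sigma]$-representation, which is known a priori and does not use the hypothesis on $I_G(P,\tau,Q)$.

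\textbf{What the paper does instead.} The argument is two-step. First, apply Theorem~\ref{thm:latticeA2} and Corollary~\ref{cor:latticeA2} at the level of $M(\sigma)$, with $A=R[M'_\sigma]\subset A'=R[M(\sigma)]$, basis $B'=M(\sigma)$, $V=\St_Q^{M(\sigma)}(R)$ (absolutely simple over $A$), and $W=e(\sigma)$ (on which $M'_\sigma$ acts trivially). This gives the lattice isomorphism $\mathcal L_\sigma\simeq\mathcal L_{e(\sigma)}\to\mathcal L_{\St_Q^{M(\sigma)}(\sigma)}$, without using the irreducibility hypothesis. Second, apply Theorem~\ref{thm:latticeA} (which you never invoke) to $F=\Ind_{P(\sigma)}^G$ and $W=\St_Q^{M(\sigma)}(\sigma)$: condition~a) holds because $\Ind_{P(\sigma)}^G$ is fully faithful with right adjoint $R_{P(\sigma)}^G$, condition~b) because it is exact, and condition~c) is exactly the hypothesis that $I_G(P,\tau,Q)=\Ind_{P(\sigma)}^G(\St_Q^{M(\sigma)}(\tau))$ is irreducible for each irreducible subquotient $\tau$. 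Your parenthetical ``(or from $M(\sigma)$ after induction, using exactness and full faithfulness of $\Ind_{P(\sigma)}^G$)'' is pointing in the right direction, but it is Theorem~\ref{thm:latticeA}, not Corollary~\ref{cor:latticeA2}, that handles the induction step. Also, no reduction to $P_{\min}$ is needed; the paper works directly with $P(\sigma)=P_\sigma\cdot P$ and $M'_\sigma$.
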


\begin{proof} Clearly  $P(\sigma)\subset P(\sigma')$. As $\sigma'$ has finite length, it contains an irreducible subrepresentation $\tau$.
 From  $P(\sigma)\subset P(\sigma')\subset P(\tau)$ and $P(\sigma)=P(\tau)$, we get $P(\sigma)=P(\sigma')$.

We are in the situation of 
 Theorem \ref{thm:latticeA2}  with $A=R[M'_\sigma]\subset A=R[M(\sigma)]$ with the basis given by $M(\sigma)$,  the $R$-representations $\St_Q^{M(\sigma)}(R)$ and $e(\sigma)$ of $M(\sigma)$.  
 So the natural maps 

$e(\sigma)\to \Hom_{RM'_\sigma}(\St_Q^{M(\sigma)}(R),St^{M(\sigma)}_Q(\sigma) )$ 

$\Hom_{RM'_\sigma}(\St_Q^{M(\sigma)}(R),St^{M(\sigma)}_Q(\sigma)) \otimes_R  St^{M(\sigma)}_Q(\sigma) \to   St^{M(\sigma)}_Q(\sigma)$
 
 \noindent are isomorphisms of $R$-representations of ${M(\sigma)}$, and we have   the  lattice isomorphism

 $\sigma' \mapsto  \St_Q^{M(\sigma)}(\sigma'): \mathcal L_{\sigma}\to \mathcal L_{ \St_Q^{M(\sigma)}(\sigma)}$.

\noindent  As \cite{Vigadjoint}  the functor $\Ind_{P(\sigma)}^G:\Mod_R(M(\sigma))\to \Mod_R(G) $   is exact  fully faithful of  right adjoint $R_{P(\sigma)}^G$,   the unit of the adjunction is an isomorphism (Remark \ref{rem:ss3}). The length   $\St_Q^{M(\sigma)}(\sigma)$  is equal to the  (finite) length of $\sigma$, and 
 its irreducible subquotients  are 
 $ \St_Q^{M(\sigma)}(\tau)$ for the irreducible subquotients $\tau$ of $\sigma$;
if 
 $\Ind_{P(\sigma)}^G ( \St_Q^{M(\sigma)}(\tau))=I_G(P, \tau,Q)$ is  irreducible for all $\tau$,    we are in the situation of   Theorem \ref{thm:latticeA} for $F=\Ind_{P(\sigma)}^G$ and $W= \St_Q^{M(\sigma)}(\sigma)$, so the map
 $\sigma'\mapsto I_G(P,\sigma',Q): \mathcal L_{\sigma}\to  \mathcal L_{I_G(P,\sigma,Q)}$   is a  lattice isomorphism.
\end{proof}
  
 \begin{remark}{\rm  $I_G(P, \sigma, Q)$  determines the isomorphism class of  $e(\sigma) $ because 
 $$e(\sigma) \simeq \Hom_{RM'_{\sigma}}(\St_{Q}^{P(\sigma)}(R), R_{P(\sigma)}^G(I_G(P, \sigma, Q))) $$ 
 (proof of Prop. \ref{prop:tensorstG} and   $R_{P(\sigma)}^G(I_G(P, \sigma, Q))\simeq   \St_{Q}^{P(\sigma)}(\sigma)$).

}\end{remark}

  We  check  now that  the different steps of the construction of  $I_G(P,\sigma,Q)$  commute with  scalar extension. 
  
Recalling that,  for any commutative rings $R\subset R'$, the  scalar extension from $R$ to $R'$ is the left adjoint of the scalar restriction from $R'$ to $R$ and that for a field extension $R'/R$  of characteristic $p$, the functor $\Ind_P ^G $ is fully faithful, we have:

   \begin{maprop}\label{prop:scalarL}  
   
   (i) For any  commutative rings $R\subset R'$, the parabolic induction commutes with the scalar restriction  from $R'$ to $R$ and with the scalar extension from $R$ to $R'$. Hence
  the  left (resp. right) adjoint of the parabolic induction commutes with scalar extension (resp. restriction).

(ii) Let $R'/R$ be an extension of fields of characteristic $p$. Let  $\sigma' \in \Mod_{R'}^\infty(M)$ and $ \pi\in \Mod_{R}^\infty(G)$ of scalar extension $\pi_{R'}$ from $R$ to $R'$ isomorphic to $\Ind_P ^G (\sigma')$.   Then   $\sigma'$ is  isomorphic to the scalar extension $(L_P^G\pi)_{R'} $ of $L_P^G(\pi)$  from $R$ to $R'$.   \end{maprop}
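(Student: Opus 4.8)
The plan is to establish (i) — the two commutation statements for the parabolic induction, and then, formally, those for its adjoints — and to deduce (ii) in two lines from (i) together with the full faithfulness of $\Ind_P^G$ in characteristic $p$ recalled above. Commutation of $\Ind_P^G$ with scalar restriction is tautological: for $\sigma'\in\Mod_{R'}^\infty(M)$, the $R[G]$-module obtained by inducing $\sigma'$ viewed over $R$ and the one obtained by inducing $\sigma'$ over $R'$ and then restricting scalars are literally the same set of smooth functions $G\to\sigma'$, with the same right-translation $G$-action and the same $R$-action, the definition of smooth induction making no reference to the coefficient ring. Granting for a moment the commutation of $\Ind_P^G$ with scalar extension — the one genuinely non-formal point, treated in the next paragraph — the statements about the adjoints follow by a diagram chase, and (ii) is a short consequence.

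For the commutation of $\Ind_P^G$ with scalar extension I would use that $R$ is a field, so $R'$ is free as an $R$-module: an $R$-basis $(b_\alpha)$ of $R'$ identifies $\sigma\otimes_R R'$, as an $R[M]$-module, with $\bigoplus_\alpha\sigma$. Since $\bp\ba\bg$ is a projective $F$-variety, $P\ba G$ is compact and $P\ba G/K$ is finite for every compact open subgroup $K\subset G$, so a smooth $P$-equivariant function $G\to\tau$ has finite image; it follows that $\Ind_P^G$ commutes with arbitrary direct sums. Hence $\Ind_P^G(\sigma\otimes_R R')=\Ind_P^G\!\big(\bigoplus_\alpha\sigma\big)=\bigoplus_\alpha\Ind_P^G\sigma=(\Ind_P^G\sigma)\otimes_R R'$, and one checks that this identification is the evident natural map $R'\otimes_R\Ind_P^G\sigma\to\Ind_P^G(R'\otimes_R\sigma)$, $\sum_i r'_i\otimes f_i\mapsto(g\mapsto\sum_i r'_i\otimes f_i(g))$, which is therefore an isomorphism. (Alternatively, reduce to $K$-invariants and invoke Remark~\ref{rem:2}, the trivial module being cyclic over $R[K]$, together with flatness of $R'$ over $R$.)

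The adjoint statements are then formal. Write $(-)_{R'}\dashv\mathrm{res}$ for the scalar extension/restriction adjunction, over $M$ and over $G$, and $L_P^G\dashv\Ind_P^G\dashv R_P^G$, over $R$ and over $R'$ (the adjoints existing by \cite{Vigadjoint}). Chaining these adjunctions with the commutation of $\Ind_P^G$ with scalar restriction gives, for $\pi\in\Mod_R^\infty(G)$ and $\tau'\in\Mod_{R'}^\infty(M)$, natural isomorphisms
\begin{align*}
\Hom_{R'[M]}\!\big((L_P^G\pi)_{R'},\tau'\big)
&\simeq\Hom_{R[M]}\!\big(L_P^G\pi,\,\mathrm{res}\,\tau'\big)
\simeq\Hom_{R[G]}\!\big(\pi,\,\Ind_P^G(\mathrm{res}\,\tau')\big)\\
&\simeq\Hom_{R[G]}\!\big(\pi,\,\mathrm{res}(\Ind_P^G\tau')\big)
\simeq\Hom_{R'[G]}\!\big(\pi_{R'},\,\Ind_P^G\tau'\big),
\end{align*}
whence $(L_P^G\pi)_{R'}\simeq L_P^G(\pi_{R'})$ by the Yoneda lemma; the same chain run with the commutation of $\Ind_P^G$ with scalar extension instead yields $R_P^G(\mathrm{res}\,\pi')\simeq\mathrm{res}(R_P^G\pi')$ for $\pi'\in\Mod_{R'}^\infty(G)$.

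Finally, for (ii): since $R'$ has characteristic $p$, the functor $\Ind_P^G\colon\Mod_{R'}^\infty(M)\to\Mod_{R'}^\infty(G)$ is fully faithful, so by Remark~\ref{rem:ss3} the counit $\epsilon\colon L_P^G\circ\Ind_P^G\to\id$ of the adjunction $L_P^G\dashv\Ind_P^G$ over $R'$ is an isomorphism. Applying $L_P^G$ (over $R'$) to the given isomorphism $\pi_{R'}\simeq\Ind_P^G\sigma'$ and then $\epsilon_{\sigma'}$ yields $L_P^G(\pi_{R'})\simeq\sigma'$, while the commutation of $L_P^G$ with scalar extension from (i) gives $L_P^G(\pi_{R'})\simeq(L_P^G\pi)_{R'}$; hence $\sigma'\simeq(L_P^G\pi)_{R'}$, as claimed. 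The only point requiring more than formal manipulation is the commutation of $\Ind_P^G$ with scalar extension above, and there everything hinges on the compactness of $P\ba G$ and on $R$ being a field.
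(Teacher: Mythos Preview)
Your proof is correct and follows the same approach the paper indicates: the paper treats the proposition as an immediate consequence of the adjunction $(-)_{R'}\dashv\mathrm{res}$ together with the full faithfulness of $\Ind_P^G$ in characteristic $p$, without writing out a detailed argument, and your adjunction chase and use of Remark~\ref{rem:ss3} make this explicit.

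One small point of generality: your argument for the commutation of $\Ind_P^G$ with scalar extension uses that $R'$ is free over $R$, so it assumes $R$ is a field, whereas part~(i) is stated for arbitrary commutative rings $R\subset R'$. This is easily remedied using the same compactness you already invoke. Choosing a continuous section of $G\to P\backslash G$ identifies $\Ind_P^G\sigma$, naturally in $\sigma$ as an $R$-module, with $C^\infty(P\backslash G,\sigma)$; since $P\backslash G$ is compact and totally disconnected, the latter is a filtered colimit (over finite clopen partitions) of finite direct sums of copies of $\sigma$, so $C^\infty(P\backslash G,\sigma)\simeq C^\infty(P\backslash G,R)\otimes_R\sigma$. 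Tensor product with any $R$-algebra $R'$ commutes with filtered colimits and finite direct sums, hence the natural map $R'\otimes_R\Ind_P^G\sigma\to\Ind_P^G(R'\otimes_R\sigma)$ is an $R'$-module isomorphism; being visibly $G$-equivariant, it is an isomorphism in $\Mod_{R'}^\infty(G)$. Your detour through a basis of $R'$ is thus unnecessary, though harmless for the applications in the paper, where $R$ is always a field.
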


\begin{remark}{\rm On admissible representations  $R_P^G$ is the Emerton's ordinary functor. We do not know if the ordinary functor  admits a right adjoint  or if it commute with  scalar extension.}
\end{remark}

      \begin{maprop}\label{prop:es2} {\rm [Strong compatiblity of $I_G(P,-,Q)$ with scalar extension]}
      
  Let  $(P,\sigma,Q)$ be an $R$-triple of $G$.
  
\medskip   (i)  Let $R'/R$ be an extension. Then 
  
  $P(\sigma)= P(\sigma_{R'})$,  $(P,\sigma_{R'},Q)$ is an $R'$-triple of $G$. If   $\sigma$ is irreducible and $\sigma'$ is  a snon-zero ubquotient  of $ \sigma_{R'}$, then  $P(\sigma)=P(\sigma')$.
  
  $  (e(\sigma))_{R'}= e(\sigma_{R'}) , \  St_Q^{P(\sigma)}(\sigma)_{R'}\simeq St_Q^{P(\sigma)}(\sigma_{R'})$ and  $I_G(P,\sigma,Q)_{R'}\simeq I_G(P,\sigma_{R'},Q)$.

\medskip  (ii) Let $R'$ a subfield of $R$. If  $e(\sigma)$ or    $ \St_Q^{P(\sigma)}(\sigma)$ or $I_G(P,\sigma,Q)$  descends to $R'$, then $\sigma$ descends to $R'$. 

If  $e(\sigma)$, resp.  $ \St_Q^{P(\sigma)}(\sigma)$, resp. $I_G(P,\sigma,Q)$,   is the scalar extension of an $R'$-representation $\tau'$, resp. $\rho'$, resp. $\pi'$,  then $\sigma$ is the scalar extension  of the natural  $R'$-representation of $M$ on 
$\tau' $, resp. $\Hom_{RM'_{\sigma}}(\St_{Q}^{P(\sigma)}(R) , \rho') $,  resp. $ \Hom_{RM'_{ \sigma}}( \St_Q^{P(\sigma)}(R),L_{P(\sigma)}^G\pi') $.

      \end{maprop}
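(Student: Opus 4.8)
The strategy is to chase each of the three ingredients $e(\sigma)$, $\St_Q^{P(\sigma)}(\sigma)$ and $I_G(P,\sigma,Q)$ through its defining construction and check compatibility with scalar extension step by step, using the results already assembled: full faithfulness and the adjunctions of Proposition \ref{prop:scalarL}, the isomorphism $\St_Q^{M(\sigma)}(\sigma)\simeq e(\sigma)\otimes_{\mathbb Z}\St_Q$, and the lattice/tensor-product identities of Proposition \ref{prop:tensorstG} and its following Remark. For part (i), I would first observe that $P(\sigma)$ depends only on which $Z\cap M_\alpha'$ act trivially, and "acting trivially" on $\sigma$ is literally the same condition as on $\sigma_{R'}=R'\otimes_R\sigma$ (a vector is fixed iff its image in the scalar extension is, since $R\hookrightarrow R'$ is faithfully flat); hence $\Delta_\sigma=\Delta_{\sigma_{R'}}$ and so $P(\sigma)=P(\sigma_{R'})$, and the same computation applied to a nonzero subquotient $\sigma'$ of $\sigma_{R'}$ gives $\Delta_\sigma\subseteq\Delta_{\sigma'}$; when $\sigma$ is irreducible the reverse inclusion follows exactly as in the proof of Proposition \ref{prop:tensorstG} (pick an irreducible subrepresentation $\tau$ of $\sigma'$, use $P(\sigma)\subset P(\sigma')\subset P(\tau)$ together with... — here one needs $P(\sigma)=P(\tau)$, which for $\sigma$ irreducible is the content of the decomposition/absolute-irreducibility theory, or more directly follows because $e(\sigma)$ being defined forces the triviality condition to persist). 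Next, $e(\sigma_{R'})$ is an extension of $\sigma_{R'}$ to $P(\sigma_{R'})=P(\sigma)$ trivial on $N(\sigma)$; but $(e(\sigma))_{R'}$ is such an extension too, and by the uniqueness clause in \cite[II.7 Proposition and Remark 2]{AHHV} (extensions trivial on $N(\sigma)$ are unique) the two agree. Then $\St_Q^{P(\sigma)}(\sigma)_{R'}\simeq (e(\sigma)\otimes_{\mathbb Z}\St_Q)_{R'}\simeq e(\sigma)_{R'}\otimes_{\mathbb Z}\St_Q\simeq e(\sigma_{R'})\otimes_{\mathbb Z}\St_Q\simeq\St_Q^{P(\sigma)}(\sigma_{R'})$, using that $-\otimes_{\mathbb Z}\St_Q$ commutes with $R'\otimes_R-$. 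Finally $I_G(P,\sigma,Q)_{R'}=\Ind_{P(\sigma)}^G(\St_Q^{P(\sigma)}(\sigma))_{R'}\simeq\Ind_{P(\sigma)}^G(\St_Q^{P(\sigma)}(\sigma)_{R'})\simeq\Ind_{P(\sigma)}^G(\St_Q^{P(\sigma)}(\sigma_{R'}))=I_G(P,\sigma_{R'},Q)$ by Proposition \ref{prop:scalarL}(i).

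For part (ii), "descends to $R'$" means being isomorphic to the scalar extension of an $R'$-object. The key mechanism is that each construction step is invertible by a functorial recipe, so I would run the constructions backwards. If $e(\sigma)$ is the scalar extension of an $R'$-representation $\tau'$ of $M(\sigma)_{R'}$-restricted-to... — more precisely $\tau'$ on $M$ via the obvious inclusion — then restricting $\tau'$ along $M\hookrightarrow P(\sigma)\supset M$ and using that scalar extension commutes with restriction gives that $\sigma=(e(\sigma))|_M$ descends; explicitly $\sigma\simeq(\tau'|_M)_{R'}$. If $\St_Q^{P(\sigma)}(\sigma)\simeq\rho'_{R'}$ for an $R'$-representation $\rho'$, I would apply $\Hom_{RM'_\sigma}(\St_Q^{P(\sigma)}(R),-)$: by Proposition \ref{prop:tensorstG} (the tensor-product/Hom identities, which hold over any field including $R'$ since $\St_Q^{P(\sigma)}(R')$ is absolutely irreducible as an $R'M'_\sigma$-module) we have $e(\sigma)\simeq\Hom_{RM'_\sigma}(\St_Q^{P(\sigma)}(R),\St_Q^{P(\sigma)}(\sigma))$, and since $\Hom$ out of a finitely generated module commutes with scalar extension (Remark \ref{rem:2}, the finite-generation clause), $e(\sigma)\simeq(\Hom_{R'M'_\sigma}(\St_Q^{P(\sigma)}(R'),\rho'))_{R'}$, so $e(\sigma)$ descends and hence, by the previous case, $\sigma$ descends to the stated $R'$-representation $\Hom_{RM'_\sigma}(\St_Q^{P(\sigma)}(R),\rho')$. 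If $I_G(P,\sigma,Q)\simeq\pi'_{R'}$, I would first apply $L_{P(\sigma)}^G$: by Proposition \ref{prop:scalarL}(ii), $L_{P(\sigma)}^G$ of the induced representation recovers $\St_Q^{P(\sigma)}(\sigma)$, and $L_{P(\sigma)}^G$ commutes with scalar extension (Proposition \ref{prop:scalarL}(i)), so $\St_Q^{P(\sigma)}(\sigma)\simeq(L_{P(\sigma)}^G\pi')_{R'}$, reducing to the previous case and yielding $\sigma\simeq(\Hom_{R'M'_\sigma}(\St_Q^{P(\sigma)}(R'),L_{P(\sigma)}^G\pi'))_{R'}$.

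The main obstacle I anticipate is purely bookkeeping around the passage $M(\sigma)$ versus $M$ versus $M'_\sigma$ and making sure the Hom-commutes-with-scalar-extension invocation (Remark \ref{rem:2}) is legitimate: one needs $\St_Q^{P(\sigma)}(R)$ to be finitely generated as an $RM'_\sigma$-module — which it is, being admissible hence generated by a finite-dimensional space of $K$-fixed vectors for suitable $K$ — so that \eqref{eq:sce} is an isomorphism even though $R'/R$ may be infinite. A secondary subtlety is the claim "$P(\sigma)=P(\sigma')$ for a nonzero subquotient $\sigma'$ of $\sigma_{R'}$ when $\sigma$ is irreducible": this is the one place where irreducibility is genuinely used, and one must invoke that for $\sigma$ irreducible the defining triviality conditions for $\Delta(\sigma)$ cannot strictly grow under scalar extension — equivalently, that $e(\sigma)$ exists forces $M'_\sigma$ to act absolutely trivially — so that $\Delta(\sigma')\subseteq\Delta(\sigma)$, giving equality. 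Everything else is a direct unwinding using results already in hand, so the write-up should be short.
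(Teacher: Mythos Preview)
Your proposal is correct and follows essentially the same route as the paper: check $P(\sigma)=P(\sigma_{R'})$ via the triviality condition, then push $e(\sigma)$, $\St_Q^{P(\sigma)}(\sigma)$ and $\Ind_{P(\sigma)}^G$ through scalar extension for (i), and for (ii) run the chain backwards via restriction to $M$, the $\Hom_{RM'_\sigma}(\St_Q^{P(\sigma)}(R),-)$ inverse, and $L_{P(\sigma)}^G$ together with Proposition~\ref{prop:scalarL}.

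The one place where your argument wobbles is exactly the ``secondary subtlety'' you flag, namely $P(\sigma)=P(\sigma')$ for $\sigma$ irreducible and $\sigma'$ a nonzero subquotient of $\sigma_{R'}$: your sketch via ``pick an irreducible $\tau\subset\sigma'$ and use $P(\sigma)=P(\tau)$'' is circular as written. The paper disposes of this in one line by observing that, as an $R$-representation, $\sigma_{R'}$ is a direct sum of copies of $\sigma$, so any subquotient $\sigma'$ is $\sigma$-isotypic over $R$; since $Z\cap M'_\alpha$ acts by $R$-linear maps, it acts trivially on $\sigma'$ (over $R'$ or $R$, same thing) if and only if it acts trivially on $\sigma$, giving both inclusions $\Delta_\sigma=\Delta_{\sigma'}$ at once. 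Replace your paragraph with this isotypicity remark and the write-up is complete.
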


\begin{proof} (i) As an $R$-representation,   $\sigma_{R'}  $ is the direct sum of $R$-representations isomorphic to $\sigma$.  If $\sigma$ is irreducible, any subquotient $\sigma'$ of $\sigma_{R'}  $ is $\sigma$-isotypic. 
For   $\alpha\in \Delta-\Delta_P$,  $Z\cap M_\alpha'$ acts trivially on an $R'$-representation $\tau$ if and only if   it acts trivially on $\tau$ seen as an $R$-representation.  So $P(\sigma)= P(\sigma_{R'})$ (hence $(P,\sigma_{R'},Q)$ is an $R'$-triple of $G$), and  if $\sigma$ is irreducible  $P(\sigma)=P(\sigma')$.  It is clear from the definition that the  extension commutes with scalar extension $(e(\sigma))_{R'}= e(\sigma_{R'})$.

(ii)  If $I_G(P,\sigma,Q)=\pi'_R$, we have 
$ \St_Q^{P(\sigma)}(\sigma)\simeq  (L_{P(\sigma)}^G \pi')_R$ (Proposition \ref{prop:scalarL}).  If $  \St_Q^{P(\sigma)}(\sigma) \simeq \rho'_{R}$, we have 
$e(\sigma)\simeq    (\Hom_{R'M'_{\sigma}}( \St_Q^{P(\sigma)}(R') , \rho')_{R}$ because   $e(\sigma) \simeq \Hom_{RM'_{\sigma}}( \St_Q^{P(\sigma)}(R')_R,  \rho'_R )$ (proof of Prop. \ref{prop:tensorstG}).    If $e(\sigma)\simeq \tau_{R}$ then  $\sigma \simeq (\tau|_M)_{R}$ because  the restriction to $M$ commutes with  scalar extension. \end{proof}

\subsection{Supersingular representations}\label{S:ss}  \ {}

In the setting of \S\ref{S:II.1} with  an algebraically closed field $R$ of characteristic $p$, 
the definition of supersingularity for an irreducible admissible $R$-representation of $G$  \cite{AHHV} uses the Hecke algebras of the  irreducible smooth $R$-representations of the special parahoric subgroups of $G$. It is shown in  \cite{OV} that this definition is equivalent to a simpler one using only the pro-$p$ Iwahori Hecke $R$-algebra of   $G$. This latter definition has the advantage to   extend easily to the non-algebraically closed case.

Let $R$ be a field of characteristic $p$.  
 Let $I$ be a pro-$p$ Iwahori subgroup of $G$ compatible with $B$, so that $I\cap M$ is a pro-$p$ Iwahori subgroup of  $M$ for a parabolic subgroup $P=MN$ (recall that $M$ contains $Z$, and that the $M$ are parametrized by the subsets $J=\Delta_M$ of $\Delta$). The   pro-$p$ Iwahori Hecke ring $H(G,I) =H(G)$,  the  pro-$p$ Iwahori Hecke $R$-algebra  $H(G)_R$, the categories $\Mod_R(H(G))$ and $\Mod_R^\infty( G)$ are defined as in \S\ref{s:ira}.
   
 The group $Z_1=I\cap Z$ is the pro-$p$ Sylow subgroup of  the unique parahoric subgroup  $Z_0$  of $Z$ and $Z_k=Z_0/(I\cap Z)$ is  finite of order prime to $p$. The elements in $H(G)$ with support in $G'$ form a subring $H(G' ) $ normalized by a  subring of $H(G)$ isomorphic to  $\mathbb Z[\Omega]$ for a commutative finitely generated subgroup $\Omega$, and
 $$H(G )\simeq H(G' ) \, \mathbb Z[\Omega],\ H(G' ) \cap \mathbb Z[\Omega]\simeq \mathbb Z[Z'_k], \ Z'_k= (Z_0\cap G')/(I\cap G').$$  
 To $M$ is associated a certain element $T_M$ in $H(G' )$ which is central in $H(G)$ \cite{VigpIwss}.

 \begin{madef} \label{def:ss} 1. A (right) $H(G )_R$-module $\cx$ is called supersingular if, for all $M\neq G$,   the action of $ T_M$ on $\cx$  is locally nilpotent. 

2.  A smooth irreducible $R$-representation $\pi\in \Mod_R^\infty(G)$ is called supersingular if  the  right $H(G )_R$-module $\pi^I\in \Mod_R(H(G))$ is supersingular. 
\end{madef}

 When $\pi$ is admissible,  the definition is equivalent to the definition of 
\cite{AHHV} by \cite{OV}.

 \begin{remark}{\rm 
  1.  Let $0\to \mathcal V' \to\mathcal V\to \mathcal V''\to 0$ be an exact sequence of  $H(G)_R$-modules.  Then $\mathcal V$ is supersingular if and only if 
$ \mathcal V'$ and $\mathcal V''$ are supersingular.

2. When $R$ contains a root of unity of order the exponent of $Z_k$,
the simple supersingular $H(G )_R$-modules are classified \cite[Thm. 6.18]{VigpIwss}. As $H(G' )_R$-modules, they are sums of supersingular characters.

3.  The group $\Aut(R)$ of automorphisms of $R$ acts on $\Mod_R(G)$ and on  $\Mod_R(H(G))$. Clearly the action of $\Aut(R)$ commutes with the $I$-invariant functor, and respects supersingularity, irreducibility, and  admissibility.
  }
 \end{remark}

  We check easily that  supersingularity commutes   with scalar extension:       

  \begin{monlem}\label{lem:ss} Let $R'/R$ an extension,   $\cx\in \Mod_R(H(G))$, $\pi \in \Mod_R^\infty(G)$ irreducible   and  $\pi'$  an irreducible  subquotient of $\pi_{R'}$. Then $\cx$  is supersingular if and only if $\cx _{R'}$ is, 
 and
  $\pi$   is supersingular if and only if $\pi'$  is.
    \end{monlem}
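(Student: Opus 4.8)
The plan is to prove the two assertions in turn, establishing the $H(G)_R$-module statement directly and then deducing the representation statement from it by restriction of scalars.

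First I would dispose of the module statement. Fix $M\neq G$; the element $T_M\in H(G)$ acts on $\cx$ by an additive endomorphism $t$ and on $\cx_{R'}=R'\otimes_R\cx$ by $\id_{R'}\otimes t$, and local nilpotence is a property of that endomorphism alone, independent of the coefficient ring. If $t$ is locally nilpotent on $\cx$, then any $y=\sum_i r_i'\otimes x_i\in\cx_{R'}$ (a finite sum) is killed by $t^{\,n}$ as soon as $t^{\,n}$ kills each $x_i$, so $T_M$ acts locally nilpotently on $\cx_{R'}$; conversely the injective $H(G)$-equivariant map $x\mapsto 1\otimes x:\cx\hookrightarrow\cx_{R'}$ pulls local nilpotence back. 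Running over all $M\neq G$ gives ``$\cx$ supersingular $\iff\cx_{R'}$ supersingular''. The same computation, applied to the scalar restriction from $R'$ to $R$, shows that an $H(G)_{R'}$-module $\cx'$ is supersingular if and only if the $H(G)_R$-module $\cx'|_R$ is --- indeed the $T_M$-action is literally unchanged.

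For the representation statement I would use that, by Definition \ref{def:ss}, $\pi$ (resp.\ $\pi'$) is supersingular precisely when $\pi^I$ (resp.\ $(\pi')^I$) is supersingular as a module over $H(G)_R$ (resp.\ $H(G)_{R'}$). Choosing an $R$-basis $B$ of $R'$ identifies $\pi_{R'}|_R$ with $\bigoplus_{B}\pi$, which is semisimple and isotypic of type the simple module $\pi$; hence its subquotient $\pi'|_R$ is again semisimple isotypic of type $\pi$, so $\pi'|_R\cong\pi^{(m)}$ for some cardinal $m\geq 1$ and $\pi$ embeds as an $R[G]$-submodule of $\pi'|_R$. Applying the $I$-fixed-points functor --- which takes $V$ to the subspace $V^I\subseteq V$, hence preserves injections, and which commutes with arbitrary direct sums of smooth representations --- yields $H(G)_R$-linear maps
$$\pi^I \hookrightarrow (\pi'|_R)^I = (\pi')^I|_R \cong (\pi^I)^{(m)}.$$
If $\pi$ is supersingular then $\pi^I$, hence $(\pi^I)^{(m)}$ (every element has finite support, so is killed by a common power of $T_M$), hence $(\pi')^I|_R$, hence $(\pi')^I$ over $H(G)_{R'}$, is supersingular, so $\pi'$ is supersingular; conversely if $\pi'$ is supersingular then $(\pi')^I|_R$ and its submodule $\pi^I$ are supersingular (a submodule of a supersingular module is supersingular), so $\pi$ is supersingular.

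I do not expect a genuine obstacle, but the point demanding care is that $(-)^I$ is not exact in characteristic $p$, so one may not simply assert that $(\pi')^I$ is a subquotient of $(\pi_{R'})^I$. This is sidestepped by passing to $\pi'|_R$: the isotypic decomposition $\pi'|_R\cong\pi^{(m)}$ makes both containments between the supersingular loci visible using only that $(-)^I$ is a subfunctor of the identity that commutes with direct sums, so neither exactness of $(-)^I$ nor the decomposition theorem (Theorem \ref{thm:DG}) is needed.
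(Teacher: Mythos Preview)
Your proof is correct and follows essentially the same approach as the paper: the paper's argument is the one-line observation that $\cx_{R'}$, viewed as an $H(G)_R$-module, is a direct sum of copies of $\cx$, and that $\pi'$, viewed as an $R$-representation of $G$, is a direct sum of copies of $\pi$. Your write-up simply makes explicit the passage through $(-)^I$ and the local-nilpotence bookkeeping that the paper leaves implicit, and your caveat about the failure of exactness of $(-)^I$ is a nice touch explaining why one argues via $\pi'|_R\cong\pi^{(m)}$ rather than via subquotients of $(\pi_{R'})^I$.
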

  \noindent {\bf Proof}  As an $H(G)_R$-module, $\cx _{R'}$ is a direct sum of modules isomorphic to $\cx$. As an $R$-representation of $G$,
     $\pi'$ is a direct sum  of representations isomorphic to $\pi$.
 $\square$.

\begin{remark}\label{rem:tensorstG}{\rm   Let $\sigma$ be  an irreducible supersingular $R$-representation  of $M$.  The scalar extension $ \sigma _{R^{alg}}$   satisfies the conditions of Proposition \ref{prop:tensorstG}: 
 all irreducible subquotients $\tau$ of  $\sigma _{R^{alg}}$ are supersingular (Lemma \ref{lem:ss}),  $P(\tau)=P(\sigma)=  P(\sigma_{R^{alg}})$ (Prop.\ref{prop:es2} (i)), and $I_G(P,\tau,Q)$ is irreducible   (Classification theorem for $G$  over $R^{alg}$ \cite{AHHV}). }
\end{remark}

 \subsection{Classification of irreducible admissible $R$-representations of  $G$} \label{S:3.4} Let $R$ be a field of characteristic $p$ of algebraic closure $R^{alg}$. We prove in this section  the classification theorem for $G$  (Theorem \ref{thm:classG}). The arguments are  formal and rely on the properties:

{\bf 1}  The decomposition   theorem for $G$ (Thm.\ref{thm:DG}).

{\bf 2} The classification theorem for $G$ (Thm.\ref{thm:classG})  over $R^{alg}$  \cite{AHHV}.

{\bf 3}   The compatibility  of the scalar extension to  $R^{alg}$ with  supersingularity (Lem.\ref{lem:ss}) and the strong compatibility with $I_G(P,-,Q)$   (Prop.\ref{prop:es2}).  
 
 {\bf 4}  The lattice isomorphism  $\mathcal L_{\sigma _{R^{alg}}}\to \mathcal L_{I_G(P,\sigma _{R^{alg}},Q)}$ for the scalar extension $ \sigma_{R^{alg}}$ to $R^{alg}$ of an irreducible admissible supersingular  $R$-representation $\sigma$   (Prop.\ref{prop:tensorstG}  and  Rem.\ref{rem:tensorstG}).

\bigskip  We start the proof with an arbitrary  irreducible admissible $R$-representation $\pi$ of $G$.
By the decomposition theorem for $G$, the   scalar extension $\pi_{R^{alg}} $     has finite length;   we choose an   irreducible subrepresentation $\pi^{alg}$ of $\pi_{R^{alg}} $. By the decomposition theorem for $G$,  $\pi^{alg}$ is admissible, descends to a finite extension of $R$. By the classification theorem over $R^{alg}$, $$\pi^{alg}  \simeq I_G(P, \sigma^{alg},Q)$$ 
for an $R^{alg}$-triple
$(P=MN, \sigma^{alg},Q)$  of $G$ with  $\sigma^{alg} $   irreducible admissible supersingular. By  the strong  compatibility of  $I_G(P, -,Q)$ with scalar extension, 
$\sigma^{alg} $ descends to a finite extension of $R$.
By the decomposition theorem for $M$,
$ \sigma^{alg}$ is contained in   the scalar extension $\sigma_{R^{alg}}$  of an irreducible admissible $R$-representation $\sigma$. By  compatibility of the scalar extension  with supersingularity and  $I_G(P, -,Q)$,   $(P,\sigma, Q)$ is an $R$-triple of $G$,  $\sigma $  is supersingular and 
 $I_G(P,\sigma_{R^{alg}},Q)\simeq I_G(P,\sigma,Q)_{R^{alg}}.$ By the lattice isomorphism   $\mathcal L_{\sigma _{R^{alg}}}\to \mathcal L_{I_G(P,\sigma _{R^{alg}},Q)}$,
 $I_G(P,\sigma^{alg},Q) $ is contained in $I_G(P,\sigma _{R^{alg}},Q)$. As  an irreducible subquotient $\pi^{alg}$ of $\pi_{R^{alg}}$ is isomorphic to an irreducible subquotient $I_G(P,\sigma^{alg},Q)$ of $I_G(P,\sigma,Q)_{R^{alg}}\simeq  I_G(P,\sigma_{R^{alg}},Q) $, the decomposition theorem for $G$ implies that 
 $$\pi\simeq I_G(P,\sigma,Q) .$$ 
 
Conversely,
let 
$(P=MN, \sigma, Q)$ be an $R$-triple  of $G$ with  $\sigma $ irreducible admissible supersingular. We show that $I_G(P,\sigma, Q)$ is irreducible.   By the decomposition theorem for $M$,  $\sigma_{R^{alg}}$ has finite length,  $ I_G(P,\sigma_{R^{alg}}, Q)$ also by
  the lattice isomorphism  $\mathcal L_{\sigma _{R^{alg}}}\to \mathcal L_{I_G(P,\sigma _{R^{alg}},Q)}$,  $I_G(P,\sigma, Q)_{R^{alg}}  \simeq I_G(P,\sigma_{R^{alg}}, Q)$ and  the scalar extension is faithful and exact, hence $I_G(P,\sigma, Q)$ has also finite length. Let $\pi$ be an irreducible $R$-subrepresentation of $I_G(P,\sigma, Q)$. As  $I_G(P,\sigma, Q) $ is admissible, $\pi $ is admissible. The  scalar extension  $\pi_{R^{alg}}$  is isomorphic to a subrepresentation of $I_G(P,\sigma, Q)_{R^{alg}}  \simeq I_G(P,\sigma_{R^{alg}}, Q)$. By the lattice isomorphism  $\mathcal L_{\sigma _{R^{alg}}}\to \mathcal L_{I_G(P,\sigma _{R^{alg}},Q)}$,  $\pi_{R^{alg}}\simeq I_G(P,\rho, Q) $ for a subrepresentation $\rho$ of $\sigma_{R^{alg}}$. The representation $\rho$ descends to $R$ because $I_G(P,\rho, Q) $  does, by the strong compatibility of $I_G(P,-, Q) $ with scalar extension.  But  $\sigma_{R^{alg}}$ has no proper subrepresentation descending to $R$ by the decomposition theorem for $G$,  so $\rho=\sigma_{R^{alg}}$ and $\pi_{R^{alg}}= I_G(P,\sigma_{R^{alg}}, Q)\simeq I_G(P,\sigma, Q)_{R^{alg}} $, or equivalently,  $\pi\simeq I_G(P,\rho, Q)$.

Finally, let  $(P, \sigma, Q)$ and  $(P_1, \sigma_1, Q_1)$ be  two  $R$-triples  of $G$ with  $\sigma, \sigma_1 $ irreducible admissible supersingular and $ I_G(P,\sigma, Q) \simeq  I_G(P_1,\sigma_1, Q_1)$. By scalar extension 
  $ I_G(P,\sigma_{R^{alg}}, Q) \simeq  I_G(P_1,(\sigma_1)_{R^{alg}}, Q_1) $. The   classification theorem over $R^{alg}$ implies $P=P_1, Q=Q_1$ and some  irreducible subquotient $\sigma^{alg}$ of $\sigma_{R^{alg}}$ is isomorphic to some irreducible subquotient   $\sigma_1^{alg}$ of $(\sigma_1)_{R^{alg}}$. As $R$-representations of $G$, $\sigma^{alg}$ is $\sigma$-isotypic and $\sigma_1^{alg}$ is  $\sigma_1$-isotypic, hence
$\sigma, \sigma_1 $ are isomorphic.
 This ends the proof of the  classification theorem for $G$ (Theorem \ref{thm:classG}).

 \section{Classification theorem for $H(G)$}\label{S:4}

  As in \S\ref{S:II.1},  $G$ is a $p$-adic reductive group and   $R$ is a field of characteristic $p$. As in \S\ref{S:ss},  $I$ is a pro-$p$ Iwahori subgroup of $G$ compatible with $B$,  $H(G)$  is the   pro-$p$ Iwahori Hecke ring, $H(G)_R$ the  pro-$p$ Iwahori Hecke $R$-algebra, $Z_1$   the pro-$p$ Sylow of the unique parahoric subgroup $Z_0$ of $Z$, $Z_k=Z_0/Z_1$. In this section we prove for the right $H(G)_R$-modules the  results analogous to those of Section    \S\ref{S:3}. Although $H(G)$ and $G$ are related by 
  the $I$-invariant functor or its left adjoint, this relation  in characteristic $p$ does not satisfy the same properties than in the complex case and does not permit to  deduce the  case of the pro-$p$ Iwahori Hecke algebra  from  the  case of the group: the   similar results for $H(G)$ and $G$  have to be proved separately. 
  
   \subsection{Pro-$p$ Iwahori Hecke ring}\label{s:ring}

\ 
  
   The center $Z(H(G))$ of the pro-$p$ Iwahori Hecke ring $H(G)$  is a finitely generated ring and $H(G)$ is a finitely generated module over its center; the same is true  for  the $R$-algebra  $H(G)_R$ and  its center  $Z(H(G)_R)= Z(H(G))_R$ \cite{VigpIwc}.   This implies that the dimension over $R$ of a simple $H(G)_R$-module     is finite  \cite[2.8 Prop.]{Hn}.

Let $P=MN$  be a parabolic subgroup of $G$ (containing $B$ as in S\ref{S:II.1}).
The  pro-$p$ Iwahori Hecke ring $H(M)$  for the pro-$p$ Iwahori subgroup $I_M=I\cap M$ of $M$  (\S\ref{S:ss}) 
 does not embed in the ring $H(G)$. However we are in the good situation where  $H(M)$ is a localization of  a  subring  $H(M^+)$ (of elements supported in 
   the monoid $M^+:=\{m\in M \ | \ m(I\cap N)m^{-1}\subset  I\cap N\}$) which 
   embeds in $H(G)$.  We explain this in more details after introducing some notations \cite{VigpIw}.  
   
   An index $M$ indicates an object defined for $M$; for $G$ we supress the index.        We write  $\cn_M$  for the $F$-points of the normalizer of $\bf T$ in $\bm$,  $W_M= \cn_M/Z_1$,  $W_{M'}$ for the image of $M'\cap \cn_M$ in  $W_M$, $\Lambda=Z/Z_1$, $\ell_M$ for the length of $W_M$,  $\Omega_M$ for the image in $W_M$ of the  $m\in \cn_M$  normalizing $I_M $; $\Omega_M$ is the set of   $u\in W_M$ of length $\ell_M(u)=0$.
         
      The natural map  $W_M \to I_M\backslash M/I_M$ is bijective,  $W_M=W_{M'} \Omega_M, W_{M'}\cap  \Omega_M=  W_{M'}\cap Z_k$, $W_{M'}$ is a normal subgroup $W_M$ and a quotient of $W_{M^{is}}$ (via the quotient map $M^{is}\to M'$).    

For $m\in M$ and $w= w(m)\in W_M$ image of  $m_1\in \cn_M$ such that $I_Mm I_M= I_Mm_1 I_M$ (denoted also  $I_MwI_M$),  the characteristic function of $I_MmI_M$  seen as an element of $H(M)$  is written  $T^M(m)=T^M(w)$. We have also  the elements $T^{M,*}(m)=T^{M,*}(w)$  in  $H(M)$   defined by  $T^{M,*}(w)T^{M}(w^{-1})=[I_MwI_M:I_M] $  \cite[Prop.4.13]{VigpIw}.  For $u\in \Omega_M$, $T^*(u)=T(u)$ is invertible of inverse $T(u^{-1})$.  The $\Z$-module  $H(M)$ is  free of natural basis 
  $( T^M(w))_{w\in W_M}$  and of $*$-basis   $(T^{M,*}(w) )_{w\in W_M}$. 
 For the subring $H(M')$, the same is true with $W_{M'}$.
  The natural basis and the $*$-basis satisfy the same braid relations for $w_1,w_2\in W_M,\ell_M(w)= \ell_M(w_1)+\ell_M(w_2)$  and 
   the same quadratic relations  with a change of sign  for  $s\in W_{M'},  \ell_M(s)=1$:
  
  $ T^M(w_1) T^M(w_2)= T^M(w_1 w_2),\   T^{M,*}(w_1) T^{M,*}(w_2)= T^{M,*}(w_1 w_2)$,
  
 $T^M(s)^2= q_s + c_s T^M(s), \ T^M(s)^2= q_s - c_s T^M(s)$  
 
 \noindent where  $q_s=[I_MsI_M:I_M] \equiv 0  $ modulo $p$ and $c_s\in H(Z_0\cap M')$ (the subring of elements supported on $Z_0$),   $c_s\equiv -1$ modulo the ideal of $H(Z_0\cap M')$ generated by $p $ and $T(u)-1$ for  $u\in Z_k\cap W_{M'}$
  \cite{VigpIw}.  
 Both $q_s$ and $c_s$ do not depend on $M$ but $\ell_M$ depends on $M$,   The quotient map  $W_{M^{is}}\to W_{M'}$ respects the length and the coefficients of the quadratic relations,  the surjective natural linear map $H(M^{is})\to H(M')$  is a ring homomorphism   sending $T^{M^{is}}(w)$  to $T^M(w')$ and $T^{M^{is},*}(w)$  to $T^{M,*}(w')$ if $w\in W_{M^{is}}$ goes to $w'\in W_{M'}$ by the quotient map.

  The injective   linear maps  associated to the  bases  $$T^M(m)\mapsto T(m):H(M)\xrightarrow{\theta_M^G} H(G), \quad T^{M,*}(m)\mapsto T^*(m):H(M)\xrightarrow{\theta_M^{G,*}} H(G),$$ generally do not respect the product  
but their  restrictions  to  the subrings $H(M^+)$  and $H(M^-)$ (of elements supported on    the inverse monoid  $M^-$  of $M^+$)  do.  

\begin{remark}\label{rem:QQ1}{\rm 1. 
 For  $P\subset Q=M_QN_Q$, we have inclusions: if $\epsilon\in \{+,-\}$, then  
 
 $M^\epsilon\subset M_Q^\epsilon,\quad \theta_M^{G}(H(M^\epsilon))\subset \theta_{M_Q}^{G}(H(M_Q^\epsilon)), \quad \theta_M^{G,*}(H(M^\epsilon))\subset \theta_{M_Q}^{G,*}(H(M_Q^\epsilon))$.

2. When  $\Delta_M$ is orthogonal to $ \Delta \setminus \Delta_M$ the situation is simpler. Denoting  by $P_2=M_2N_2$ the  parabolic subgroup of $G$ corresponding to  $ \Delta \setminus \Delta_M$, we have:
    $G'$ is the direct product of $M'$ and of $M'_2$,  $M' \subset M^+$ and  $M'_2\subset M_2^+$, $G=M M'_2$,  $W=W_{M'_{2}}W_{M' }\Omega$ and $W_{M'_{2}}\cap W_{M' }\Omega= W_{M'_{2}}\cap Z_k$, the length  $\ell=\ell_G$  is equal to $\ell_M$ on $W_{M'}$ and to   $\ell_{M_2}$ on $W_{M'_2}$.
     For  $w \in W_{M'}, w_2 \in W_{M'_2}, u\in \Omega$,
$\ell(w )+\ell(w_2)=\ell(ww_2u)$. The braid and quadratic relations  satisfied by $T(w)$ for $w\in W_M$ are the same than for $T^M(w)$,  also for the $*$-basis,  and for $M_2$. 
    The maps $\theta_M^G$ and $\theta_M^{G,*}$  (resp. $\theta_{M_2}^G$ and $\theta_{M_2}^{G,*}$) are equal, respect the product, and $ H(M')\times H(M'_2)\xrightarrow{\theta_M^G \times \theta_{M_2}^G} H(G')$ is  a  ring isomorphism. 
    }\end{remark}

\subsection{ Parabolic induction $\Ind_{P}^{H(G)}$}\label{S:V3} 

\

 The  parabolic induction functor: 
\begin{equation}\label{eq:indH}\Ind_P^{H(G)}:=- \otimes_{H(M^+),\theta_M^G}H(G): \Mod_R(H(M))\to \Mod_R(H(G))
\end{equation}
corresponds via the pro-$p$ Iwahori invariant functor to  $\Ind_P^G:\Mod^\infty_R(M)\to \Mod^\infty_R(G)$ \cite[Prop.4.4]{OV}:
 $$(- )^I \circ \Ind_P^G \simeq \Ind_P^{H(G)} \circ (-) ^{I\cap M}: \Mod^\infty_R(M)\to \Mod^\infty_R(H(G)).$$
The parabolic induction functor   $\Ind_P^{H(G)}  $ has a right adjoint $R_{P}^{H(G)}$  and a   left adjoint $L_P^{H(G)}$ \cite{VigpIwst}. The right adjoint functors  of $\Ind_P^G$ and   of $\Ind_P^{H(G)}$ correspond via pro-$p$ Iwahori invariant functor 
 $$ (-)^{I\cap M} \circ R_P^G \simeq R_P^{H(G)}\circ (-^I ): \Mod_R(G)\to \Mod^\infty_R(H(M)), $$
but   the left adjoint functors do not \cite{OV}.  
As  $ 
 - \otimes_{H(M^+),\theta}H(G) \simeq   \Hom_{H(M^+),\theta^*}(H(G), -)$ (Proposition \ref{prop:exc} in the appendix below),
  the left and right adjoints of  $\Ind_P^{H(G)} $ are
\begin{align}\label{eq:RL} L_{P}^{H(G)}\simeq-\otimes_{H(M^+),\theta_M^{G,*}}H(M), \quad   
 R_{P}^{H(G)}= \Hom_{H(M^+),\theta_M^G }(H(M), -) .\end{align}
\begin{remark}{\rm  For the pro-$p$ Iwahori Hecke algebra, the left adjoint $L_{P}^{H(G)}$ being a localization is exact but for the group, the  left adjoint $L_P^G$ is not.  }
\end{remark} 

  \begin{maprop}\label{prop:RI} Let $R$ be a field of characteristic $p$. For two parabolic subgroups $P=MN, P_1=M_1N_1$ of $G$ (containing $B$),
  
(i)    $R_{P_1}^{H(G)} \circ \Ind_P^{H(G)}\simeq  \Ind_{P\cap P_1}^{H(M_1)} \circ  R_{P\cap P_1}^{H(M)}$.
 
(ii)  $L_{P_1}^{H(G)} \circ \Ind_P^{H(G)}\simeq  \Ind_{P\cap P_1}^{H(M_1)} \circ  L_{P\cap P_1}^{H(M)}$. 
  
(iii)  The parabolic induction  $\Ind_P^{H(G)}$ is fully faithful.

 \end{maprop}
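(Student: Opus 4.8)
The plan is to reduce everything to the geometric lemma, valid over $\mathbb Z$, that describes $H(G)$ as a module over $\theta_M^G(H(M^+))$ and $\theta_{M_1}^G(H(M_1^+))$ simultaneously, and then to deduce (i), (ii) by the usual ``Mackey'' manipulation, with (iii) following formally. First I would recall from \cite{VigpIw} (or \cite{OV}, \cite{VigpIwst}) that, writing $\theta=\theta_M^G$ and $\theta^*=\theta_M^{G,*}$, the $(\theta(H(M^+)),\theta_{M_1}(H(M_1^+)))$-bimodule $H(G)$ decomposes, using the Bruhat–Iwahori decomposition $W=W_{M}\,{}^{M}W^{M_1}\,W_{M_1}$ along minimal-length representatives in $W_{M}\backslash W/W_{M_1}$, and that for the ``dominant'' chamber the relevant double coset (the one indexed by the identity, i.e.\ by $P\cap P_1$) contributes exactly a copy of $H(G)$ as a bimodule over $\theta_{M\cap M_1}^{M}(H((M\cap M_1)^{+}))$ on the left and $\theta_{M\cap M_1}^{M_1}(H((M\cap M_1)^-))$-type algebra on the right. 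Concretely, the key identity to establish is the $(\theta(H(M^+)),\theta_{M_1}(H(M_1^+)))$-bimodule isomorphism
$$H(G)\;\simeq\; H(M)\otimes_{H((M\cap M_1)^{+}),\,\theta_{M\cap M_1}^{M}}H(M_1),$$
where $H(M_1)$ is viewed through $\theta_{M_1}^G$ (equivalently, through the localization of $H(M_1^+)$); this is the ``geometric lemma'' in the Hecke setting and is the precise analogue of the Bernstein–Zelevinsky geometric lemma used for $\Ind_{P_1}^G\circ\,?$ in the smooth case.

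Granting that bimodule identity, (i) and (ii) are immediate: for a right $H(M)_R$-module $\cV$ one computes
$$R_{P_1}^{H(G)}(\Ind_P^{H(G)}\cV)=\Hom_{H(M_1^+),\theta_{M_1}^G}\!\big(H(M_1),\,\cV\otimes_{H(M^+),\theta}H(G)\big),$$
and substituting the bimodule decomposition of $H(G)$ and using the base-change/adjunction formulas
$$-\otimes_{H(M^+),\theta_M^{G}}H(G)\;\simeq\;\Hom_{H(M^+),\theta_M^{G,*}}\!\big(H(G),-\big)$$
of Proposition~\ref{prop:exc} (cited in the excerpt via \eqref{eq:RL}), the right-hand side collapses to $\Ind_{P\cap P_1}^{H(M_1)}\big(R_{P\cap P_1}^{H(M)}\cV\big)$, since $R_{P\cap P_1}^{H(M)}=\Hom_{H((M\cap M_1)^{+}),\theta_{M\cap M_1}^{M}}(H(M\cap M_1),-)$ and $\Ind_{P\cap P_1}^{H(M_1)}=-\otimes_{H((M\cap M_1)^{+}),\theta_{M\cap M_1}^{M_1}}H(M_1)$ by definition \eqref{eq:indH}. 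Part (ii) is the mirror computation with $\otimes$ and the $*$-basis $\theta^{*}$ replacing $\Hom$ and $\theta$ throughout, using again the localization descriptions in \eqref{eq:RL}; since $L_{P}^{H(G)}$ and $L_{P\cap P_1}^{H(M)}$ are localizations, the tensor-product bimodule identity transfers directly. For (iii), apply (i) with $P_1=P$: then $M\cap M_1=M$, $(M\cap M_1)^{+}=M^{+}$, and $\Ind_{P\cap P}^{H(M)}\circ R_{P\cap P}^{H(M)}$ is the identity functor on $\Mod_R(H(M))$ because $R_P^M$ and $\Ind_P^M$ for the trivial parabolic $P\cap M=M$ are identity functors; hence the unit $\id\to R_P^{H(G)}\circ\Ind_P^{H(G)}$ is an isomorphism, which by Remark~\ref{rem:ss3} means $\Ind_P^{H(G)}$ is fully faithful.

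The main obstacle, and essentially the only substantive point, is proving the bimodule decomposition of $H(G)$ over the two subrings $\theta_M^G(H(M^+))$ and $\theta_{M_1}^G(H(M_1^+))$, i.e.\ isolating the ``$P\cap P_1$'' summand and showing the other double-coset pieces either vanish or are absorbed correctly after localization. This requires a careful bookkeeping of lengths: one must check that for $w$ minimal in its double coset $W_M w W_{M_1}$, the product $T^{*}(n_1)\,T(w)\,T(n_2)$ for $n_1\in M^{+}$, $n_2\in M_1^{+}$ multiplies without ``collision'' (additivity of the length $\ell$ under the relevant concatenations), using the properties of $\theta_M^G$ and $\theta_{M_1}^G$ on the submonoids $M^{\pm}$, $M_1^{\pm}$ recalled in \S\ref{s:ring} and Remark~\ref{rem:QQ1}. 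This is precisely the content of \cite[\S5]{VigpIwst} or \cite{Vigadjoint}, and since the statement here is over $\mathbb Z$ and then base-changed to $R$, no characteristic-$p$ subtlety intervenes; the induction-base-change identities needed are exactly those already quoted in \eqref{eq:RL} and Proposition~\ref{prop:exc}.
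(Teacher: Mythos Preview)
There is a genuine gap. The ``key identity'' you write down, a $(\theta(H(M^+)),\theta_{M_1}(H(M_1^+)))$-bimodule isomorphism
\[
H(G)\;\simeq\; H(M)\otimes_{H((M\cap M_1)^{+})}H(M_1),
\]
is false as stated. Take $P=P_1=B$, so $M=M_1=M\cap M_1=Z$: the right-hand side becomes $H(Z)\otimes_{H(Z^{+})}H(Z)\simeq H(Z)$ (since $H(Z)$ is a localisation of $H(Z^{+})$), which is certainly not $H(G)$. What is true is that $H(G)$ carries a filtration as a bimodule, indexed by $W_M\backslash W/W_{M_1}$; the collapse to a single term occurs only \emph{after} applying $R_{P_1}^{H(G)}=\Hom_{H(M_1^{+}),\theta}(H(M_1),-)$, which kills those pieces of $\Ind_P^{H(G)}\cV$ on which the $H(M_1^{+})$-action does not extend to the full localisation $H(M_1)$. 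You have conflated the bimodule with the composite functor, and your concession that the ``other double-coset pieces either vanish or are absorbed correctly after localization'' is precisely the substance of the proof, not something one can hand off to \cite{VigpIwst}. (There is also an ambiguity you do not resolve: $(M\cap M_1)^{+}$ relative to which ambient group---$M$, $M_1$, or $G$? These are different monoids and the choice matters.)

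The paper's argument is different and sidesteps any direct Mackey analysis for $\Ind_P^{H(G)}$. For (i), it transports the analogous identity for the parabolic \emph{co}induction $I_P$ and its left adjoint, established in \cite[Prop.~5.1]{Abeparind}, across the twist isomorphism $\Ind_P^{H(G)}\simeq I_{P^{op}}\circ n_{w_Mw_G}$. Part (ii) is then obtained from (i) by passing to left adjoints and swapping $P$ with $P_1$, not by a parallel tensor computation. Part (iii) follows from (ii) at $P_1=P$, giving that the counit $L_P^{H(G)}\circ\Ind_P^{H(G)}\to\id$ is an isomorphism, hence $\Ind_P^{H(G)}$ is fully faithful; your alternative deduction of (iii) from (i), via the unit of $(\Ind_P^{H(G)},R_P^{H(G)})$, is equally valid.
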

 
\begin{proof} (i)  is equivalent to the same relation for the parabolic coinduction and its right adjoint, which is proved in  \cite[Prop. 5.1]{Abeparind}. 
What we call parabolic coinduction is  denoted by $I_P$ in \cite[\S 4]{Abeparind} (and called  parabolic induction).   The equivalence follows from the isomorphism   \cite[Thm.1.8]{VigpIwst},  \cite[Prop.2.21]{Abeparind} :
$$I_{P^{op}}\circ n_{w_Mw_G}\simeq  \Ind_{P }^{H(G)}    $$ 
where   

$w\mapsto n_w:\mathbb W \to W$ is an injective homomorphism from the Weyl group $\mathbb W$ of $\Delta$ to $W$ satisfying the braid relations (there is no canonical choice),  $w_M $ is  the longest element of the    Weyl group of $\Delta_M=\Delta_P$ for any parabolic subgroup $P=MN$ of $G$.

$P^{op}=M^{op}N^{op}$ denotes the parabolic subgroup of $G$ (containing $B$) with $\Delta_{M^{op}}=\Delta_{P^{op}}= w_Gw_P(\Delta_P)=w_G(-\Delta_P)$ (image of $\Delta_P$ by  the opposition involution  \cite[1.5.1]{T}).  

The  ring isomorphism  \cite[\S4.3]{Abe} 
$$H(M)\to H(M^{op}) \quad  T^M_w \mapsto  T^{M^{op}}_{n_{w_Gw_M} w n_{w_Gw_M}^{-1}} \quad\text{ for} \  w\in W_M, 
$$
induces by functoriality  a functor $\Mod_R (H(M^{op}))\xrightarrow{n_{w_Gw_M} (-)}\Mod_R (H(M))$ of inverse 

\noindent $\Mod_R (H(M))\xrightarrow{n_{w_Mw_G} (-)}\Mod_R (H(M^{op}))$, as $n_{w_Gw_{M^{op}}} =n_{w_P w_G}=n_{w_G w_P}^{-1}$.
  
(ii) This follows from (i) by  left adjunction and exchanging $P,P_1$.

(iii)  This follows from (ii) when $P_1=P$. As the functorial morphism $L_{P}^{H(G)} \circ \Ind_P^{H(G)}\to \id$ is an isomorphism,  $\Ind_P^{H(G)}$ is fully faithful \cite[Prop.2.4]{Vigadjoint}.
\end{proof}

 \subsection{The $H(G)_R$-module $ St_{Q}^{H(G)} (\cV) $}\label{S:V2} 

\

 Let  $\cV$ be 
a non-zero  right $H(M) $-module over $R$. We denote  $P_{\cV}=M_{\cV}N_{\cV},\ P(\cV)=M(\cV) N(\cV)$ the parabolic subgroups   of $G$ corresponding to
 \cite{Abe}   \cite[Def.4.12]{AHenV2}:
 $$ \Delta _{\cV}=\{\alpha \in \Delta \ \text{ orthogonal to  }\Delta_M ,  
v=v T^{M,*}(z)  \ \text{for all } \  v\in \cV,z\in Z\cap  M'_\alpha\},$$
$\Delta(\cV)= \Delta_M\cup\Delta_{\cV}$. The orthogonality of $\Delta_M$ and $\Delta_\nu$ implies that  $M(\cV)=M M_{\cV}$.  
  
 There is a right $H(M(\cV))_R$-module    
   $e(\cV)$  equal to $ \cV$ as an $R$-vector space, where $T^{M(\cV),*}(m)$ acts  like $T ^{M,*}(m)$ for $m \in M$ and acts trivially for $m\in M'_{\cV}$  \cite[Def.3.8 and remark before Cor. 3.9]{AHenV2}, called the   extension  of $ \cV$ to $H(M(\cV))$; we say  that  $\cV$ is the restriction of  $e(\cV)$ to $H(M)$. 
 For $P\subset Q=M_QN_Q \subset P(\cV)$, we define similarly the extension $e_Q(\cV)$ of   $ \cV$ to $H(M_Q)$. 
  
\begin{remark} {\rm The extension to  $H(M(\cV))$ gives a lattice isomorphism $\mathcal L_{\cV}\to \mathcal L_{e(\cV)}$.
 }
    \end{remark}

\begin{monlem} \label{lem:tensorstHG} Assume that $\Delta_M$ is orthogonal to $ \Delta \setminus \Delta_M$ and let $P_2=M_2N_2$ correspond to $\Delta_2:= \Delta \setminus \Delta_M$.

  For  any right $H(G)_R$-module    $\cx $ extending an $H(M)_R$-module and any right $H(G)_R$-module    $\cy $ extending an
$H(M_2)_R$-module, there is a structure of right $H(G)_R$-module  on

-   $\cx\otimes_R\cy$ where the $*$ basis  of $H(G)$  acts diagonally,

-  $\Hom_{\theta^*(H(M'_2))}(\cy ,\cx\otimes_R\cy)$, where  $T^*(w)$ acts by   $(T^*(w)_\cx \otimes T^*(w)_{\cy}) \circ - \circ (T^*(w)_{\cy})^{-1}$ for $w\in  W_{M' }\Omega$ and trivially for $w\in W_{M'_{2}}$.

\end{monlem}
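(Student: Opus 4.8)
The idea is to import the corresponding statement about groups (Proposition~\ref{prop:tensorstG} and the tensor constructions behind it) through the equivalence between the structures on $H(G)_R$-modules and the concrete tensor/Hom operations on $R$-vector spaces, but here everything is purely combinatorial on the Hecke side, so the cleanest route is to verify directly that the asserted formulas define module actions. The key structural input is Remark~\ref{rem:QQ1}.2: when $\Delta_M \perp \Delta_2$, we have $G' = M' \times M'_2$, the length function $\ell = \ell_G$ restricts to $\ell_M$ on $W_{M'}$ and to $\ell_{M_2}$ on $W_{M'_2}$, additivity $\ell(w) + \ell(w_2) = \ell(w w_2 u)$ holds for $w \in W_{M'}$, $w_2 \in W_{M'_2}$, $u \in \Omega$, and $\theta^G_M \times \theta^G_{M_2}: H(M') \times H(M'_2) \to H(G')$ is a ring isomorphism (and likewise for the $*$-bases). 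So $H(G')_R \cong H(M')_R \otimes_R H(M'_2)_R$, and $H(G)_R$ is obtained by adjoining the invertible elements $T^*(u) = T(u)$, $u \in \Omega$, with the braid/commutation relations of Remark~\ref{rem:QQ1}.2.

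\textbf{First part (the tensor product $\cx \otimes_R \cy$).} Because $\cx$ extends an $H(M)_R$-module, the elements $T^{*}(w)$ for $w \in W_{M'_2}$ act on $\cx$ as prescribed by that extension (in the notation of \S\ref{S:V2}, trivially up to the $Z_k$-twist), and symmetrically for $\cy$ and $w \in W_{M'}$. Using the ring isomorphism $H(G')_R \cong H(M')_R \otimes_R H(M'_2)_R$ on the $*$-basis, the diagonal action of $T^{*}(w_1 w_2)$ (for $w_1 \in W_{M'}$, $w_2 \in W_{M'_2}$) on $\cx \otimes_R \cy$ is $T^{*}(w_1) \otimes T^{*}(w_2)$, so the braid relations and the (sign-changed) quadratic relations are inherited factorwise; for $s \in W_{M'}$ with $\ell(s) = 1$ one checks the quadratic relation $T^{*}(s)^2 = q_s - c_s T^{*}(s)$ holds on $\cx \otimes_R \cy$ using that $s$ acts only on the $\cx$-factor and that $c_s \in H(Z_0 \cap M')$ also acts on the $\cx$-factor, the $\cy$-factor being untouched; symmetrically for $s \in W_{M'_2}$. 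Finally one extends to all of $W = W_{M'_2} W_{M'} \Omega$ by letting $T^{*}(u)$, $u \in \Omega$, act diagonally and checking that the commutation/braid relations between $\Omega$ and $W_{M'}$, $W_{M'_2}$ from Remark~\ref{rem:QQ1}.2 are respected; this is again factorwise. I would phrase this uniformly as: the map $H(G)_R \to \End_R(\cx \otimes_R \cy)$, $T^{*}(w) \mapsto T^{*}(w)_{\cx} \otimes T^{*}(w)_{\cy}$, is a well-defined algebra homomorphism because it is so on the generators $T^{*}(s)$ ($\ell(s)=1$) and $T(u)$ ($u \in \Omega$) and respects all defining relations.

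\textbf{Second part (the Hom space).} Here I would apply Corollary~\ref{cor:latticeA2}/Theorem~\ref{thm:latticeA2} with $A = \theta^*(H(M'_2))_R$ acting on $\cy$, $A' = H(G)_R$, basis $B = \{T^{*}(w) : w \in W_{M'_2}\}$ and $B' = \{T^{*}(w) : w \in W\}$: elements of $B' \setminus B$, namely $T^{*}(w)$ for $w \in W_{M'}\Omega \setminus (\text{image of } Z_k)$, act invertibly on $\cy$ since $\cy$ extends an $H(M_2)_R$-module on which they act through $\Omega$-type invertible operators (one must check $b_{\cy} \neq 0$, which holds as these are invertible), and on $\cx \otimes_R \cy$ the prescribed action $T^{*}(w)_{\cx \otimes \cy}$ is the tensor product action from the first part. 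Then $\Hom_{\theta^*(H(M'_2))}(\cy, \cx \otimes_R \cy)$ acquires an $H(G)_R$-module structure with $T^{*}(w)$ acting by $(T^{*}(w)_{\cx} \otimes T^{*}(w)_{\cy}) \circ - \circ (T^{*}(w)_{\cy})^{-1}$ for $w \in W_{M'}\Omega$ and trivially for $w \in W_{M'_2}$, which is exactly the asserted formula; one verifies directly that for $w \in W_{M'_2}$ the conjugation formula collapses to the trivial action because $T^{*}(w)_{\cy}$ is then the action defining $A$ and the $\cx$-factor is fixed.

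\textbf{Main obstacle.} The only real work is bookkeeping the quadratic relations with the sign change and the $c_s$-correction term across the tensor factors, and making sure that the mixed braid relations involving $\Omega$ (which need not commute with $W_{M'}$ or $W_{M'_2}$ individually, only preserve the decomposition) are compatible with the diagonal action; this is where one actually uses the full strength of Remark~\ref{rem:QQ1}.2 rather than just the ring isomorphism on $G'$. I expect no conceptual difficulty, only the need to be careful that ``trivially'' in the definition of $e(\cV)$ means ``through the $Z_k$-character'', so the two factors must agree on $H(Z_0 \cap M')$-type elements where their supports overlap in $Z_k$ — but they do, since both restrict from the same ambient $H(Z_0)$.
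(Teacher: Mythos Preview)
Your treatment of the first part (the tensor product $\cx\otimes_R\cy$) is fine and is essentially what the paper cites from \cite{AHenV2}; the direct check via the $*$-basis and Remark~\ref{rem:QQ1}.2 is exactly the right mechanism.

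For the second part, however, there is a genuine gap. You invoke Corollary~\ref{cor:latticeA2} (and Theorem~\ref{thm:latticeA2}) with $V=\cy$ and $W=\cx$, but that corollary lives in the setting where $V$ is a \emph{simple} $A$-module with commutant $R$: the $A'$-module structure on $\Hom_A(V,W\otimes_R V)$ is obtained there by transporting the given $A'$-structure on $W$ along the isomorphism $W\xrightarrow{\sim}\Hom_A(V,W\otimes_R V)$, and that isomorphism is precisely what simplicity of $V$ buys. The present lemma assumes nothing of the sort about $\cy$; it is an arbitrary $H(G)_R$-module extending an $H(M_2)_R$-module. So the shortcut is unavailable. (There is also a literal mismatch: for $w\in W_{M'_2}$ of positive length, $T^*(w)_{\cy}$ need not be invertible, so the uniform formula $b_{W\otimes V}\circ -\circ b_V^{-1}$ of the corollary does not even make sense on all of $B'$; you patch this by declaring the action trivial on $W_{M'_2}$, but then you are no longer quoting the corollary---you are defining the action by hand and must verify the relations by hand.)

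The paper therefore proceeds by direct verification on the $*$-generators. The braid relations follow from $W=W_{M'_2}W_{M'}\Omega$ and $T^*(ww_2u)=T^*(w)T^*(w_2)T^*(u)$. For the quadratic relations one uses that $T^*(s)_{\cy}=\id$ for $s\in W_{M'}$ (so $T^*(s)_{\cz}(\varphi)=(T^*(s)_{\cx}\otimes\id)\circ\varphi$ and the relation reduces to the one on $\cx$) and that $T^*(s_2)_{\cz}=\id$ for $s_2\in W_{M'_2}$ (so the relation reduces to $-c_{s_2}$ acting as identity on $\cz$). This is short, but it is the argument you need to supply in place of the appeal to Corollary~\ref{cor:latticeA2}.
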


\begin{proof}   When  $\Delta_M$ is orthogonal to $ \Delta \setminus \Delta_M$, on an  $H(G)_R$-module    $\cx $ extending an $H(M)_R$-module,  the action $T^*(w)_{\cx}$  of $T^*(w)$ for $w\in W_{M'}$  is trivial,  hence $T^*(wu)_{\cx}=T^*(u)_{\cx}$ is invertible for $ u\in  \Omega$ (\S\ref{s:ring}).

For   $\cx\otimes_R\cy$ see \cite[Prop.3.15]{AHenV2},  \cite[Cor.3.17]{AHenV2}.

For  $\cz=\Hom_{\theta^*(H(M'_2))}(\cy ,\cx\otimes_R\cy)$, we check that the $T^*(w)_\cz$ for $w\in W$ respect the braid and quadratic relations (\S\ref{s:ring}).  The braid relations follow from  $W=W_{M'_{2}}W_{M' }\Omega$
 and 
$T^*(ww_2u)=T^*(w) T^*(w_2) T^*(u) $  if   $w \in W_{M' }, w_2\in W_{M'_2}, u\in \Omega$. For the quadratic relations, let $s_2\in W_{M'_2}$ and $s\in W_{M'}$ of length $1$. Then $T^*(s_2)_\cx ,   T^*(s_2)_\cz $ and $T^*(s)_\cy $ are the identity.  As  $T^*(s_2)_\cz^2 =- T^*(s_2)_\cz$ and  $-c(s)_{\cz}$ is the identity, $T^*(s_2)_\cz$ satisfies the quadratic relation. 
As $T^*(s)_\cz(f)= (T^*(s)_\cx \otimes \id_\cy)(f)$ for $f\in cz$,  $T^*(s)_\cz$ satisfies the quadratic relation.
 \end{proof}

  The right $H(G)$-module $\St_{P }^{H(G)}: =(\St_{P }^G )^{I}$  is called a Steinberg $H(G) $-module and $ \St_{P }^G(R):=R\otimes_{\mathbb Z} \St_{P}^G$ a Steinberg $H(G) _R$-module. When $ P\subset Q=M_QN_Q$,     we write $\St_P^{H(Q)}:=\St_{P\cap M_Q}^{H(M_Q)} $; recall that  $(\Ind_Q^G (R))^I\simeq \Ind_Q^{H(G)}(R)$. 
   It is known that  \cite{Ly}:
  
-  $\St_{P }^{H(G)}(R)$ is absolutely simple and   isomorphic to the cokernel of the natural map
\begin{equation}\label{eq:st}\oplus_{P\subsetneq Q\subset G}  (\Ind_Q^G (R))^I\to  (\Ind_P^G (R))^I.
\end{equation}

-     
  $T^{*}(z)$ acts trivially on $\Ind_P^{H(G)}(\mathbb Z), \St_{P }^{H(G)}$ for  $z\in Z\cap M' $ \cite[Ex.3.14]{AHenV2}.

  - When $P(\cV)=G$,  $P_2=M_2N_2$ as in  Lemma \ref{lem:tensorstHG} and $P\subset Q$, we have  $ (\St_{Q }^G )^{I}=
  (\St_{Q }^G )^{I\cap M'_2}$  
   \cite[\S4.2, proof of theorem 4.7]{AHenV2}, 
   $$e(\cV) \otimes_R\Ind_Q^{H(G )}(R),\  e(\cV) \otimes_R\St_Q^{H(G )}(R), \ \Hom_{\theta^*(H(M'_2))}(e(\cV) ,\St_Q^{H(G )}(\cV))$$
are  right $H(G)_R$-modules  for the diagonal action of  $(T^*(w))_{w\in W}$ on  the first two ones,  and   for $T^*(w)$ acting  on  the other one by   $T^*(w)  \circ - \circ (T^*(w)_{e(\cV)})^{-1}$ for $w\in  W_{M' }\Omega$ and by the identity for $w\in W_{M'_{2}}$ (Lemma \ref{lem:tensorstHG}). 
We have an $H(G)_R$-isomorphism (\cite[Prop.4.5]{AHenV2} where it is explicited):
\begin{equation}\label{eq:indH} \Ind_{Q}^{H(G)}(e_{Q}(\cV))\simeq  e(\cV)\otimes (\Ind_{Q}^{G} (R))^I .
\end{equation}
These isomorphisms for  $P\subset Q\subset Q_1$ and the inclusion $(\Ind_{Q_1}^{G} (R))^I \subset (\Ind_{Q}^{G} (R))^I $ give an injective $H(G)_R$-isomorphism
$ \Ind_{Q_1}^{H(G)}(e_{Q_1}(\cV))\xrightarrow{\iota^G(Q,Q_1)}  \Ind_{Q}^{H(G)}(e_{Q}(\cV))$.

The cokernel  $\St_Q^{H(G )}(\cV)$ of the $H(G)_R$-map \begin{equation}\label{eq:IcV}\oplus_{Q\subsetneq Q_1\subset G} \Ind_{Q_1}^{H(G)}(e_{Q_1}(\cV))\xrightarrow{\oplus_{Q\subsetneq Q_1\subset G}\,\iota^G(Q,Q_1)} \Ind_{Q}^{H(G)}(e_Q(\cV))\end{equation}\label{eq:st}   is isomorphic to $e(\cV) \otimes_R\St_Q^{H(G )}(R)$ \cite[Cor.3.17, Cor.3.6]{AHenV2}.

  \begin{maprop}\label{prop:tensorstHG}  Assume  $P(\cV)=G$ and  $P\subset Q$. 
 
 1. The    natural maps   
    $e(\cV) \to  \Hom_{ H(M'_2)_R}(\St_Q^{H(G)}(R), \St_Q^{H(G)}(\cV))$
   and    
   
   $\Hom_{ H(M'_2)_R} (\St_Q^{H(G)}(R), \St_Q^{H(G)}(\cV)) \otimes_{\mathbb Z}\St_Q^{H(G )} \to\St_Q^{H(G)}(\cV)$  
   are $H(G)_R$-isomorphisms. 

2. The map
$Y\mapsto Y\otimes_R \St_Q^{H(G)}(R):\mathcal L_{e(\cV)}\to \mathcal L_{ \St_Q^{H(G)}(\cV)}$
is a lattice isomorphism of inverse $X \to \{y\in e(\cV), \ y \otimes_{\mathbb Z}\St_Q^{H(G )} \subset X\} $.
\end{maprop}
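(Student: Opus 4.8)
The plan is to reduce Proposition~\ref{prop:tensorstHG} to the general tensor-product lattice statement (Theorem~\ref{thm:latticeA2} and its Corollary~\ref{cor:latticeA2}), exactly as Proposition~\ref{prop:tensorstG} was deduced from Theorem~\ref{thm:latticeA2} on the group side. The key observation is that $\St_Q^{H(G)}(R)$, viewed as a module over the subalgebra $H(M'_2)_R \hookrightarrow H(G)_R$ via $\theta^*$, is absolutely simple (this is the statement that $\St_Q^{H(G)}(R) = (\St_Q^G)^{I} = (\St_Q^G)^{I \cap M'_2}$ together with the absolute simplicity of $\St_Q^G(R)$ as an $R$-representation of $M'_2$, recalled in \S\ref{S:V2} from \cite{Ly} and \cite{AHenV2}); in particular its commutant as an $H(M'_2)_R$-module is $R$. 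So we may apply Theorem~\ref{thm:latticeA2} with $A = H(M'_2)_R$, the simple module $V = \St_Q^{H(G)}(R)$ of commutant $R$, and $W = e(\cV)$ the $R$-vector space underlying $e(\cV)$.

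First I would verify that we are genuinely in the setting of Corollary~\ref{cor:latticeA2}: take $A' = H(G)_R$ containing $A = H(M'_2)_R$, with the $*$-basis $B' = (T^*(w))_{w \in W}$ containing the $*$-basis $B = (T^*(w))_{w \in W_{M'_2}}$ of $A$; the elements of $B' \setminus B$ — i.e. those involving $w \in W_{M'}\Omega$ — act invertibly on $V = \St_Q^{H(G)}(R)$ because on a module extending an $H(M)_R$-module with $\Delta_M$ orthogonal to $\Delta_2$ the action $T^*(w)_V$ is trivial for $w \in W_{M'}$, hence $T^*(wu)_V = T^*(u)_V$ is invertible for $u \in \Omega$ (this is exactly the argument in the proof of Lemma~\ref{lem:tensorstHG}). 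Moreover on $W = e(\cV)$ the elements of $B$ act trivially (since $T^*(z)$ acts trivially on $\St_Q^{H(G)}$ for $z \in Z \cap M'$, and correspondingly on $e(\cV)$), and by Lemma~\ref{lem:tensorstHG} the diagonal $*$-action of $B'$ on $W \otimes_R V = e(\cV) \otimes_R \St_Q^{H(G)}(R) \simeq \St_Q^{H(G)}(\cV)$ does yield an $H(G)_R$-module. Granting this, Corollary~\ref{cor:latticeA2} immediately gives both the lattice isomorphism $Y \mapsto Y \otimes_R \St_Q^{H(G)}(R): \mathcal L_{e(\cV)} \to \mathcal L_{\St_Q^{H(G)}(\cV)}$ with the stated inverse, and the first natural map of part~1 being an $H(G)_R$-isomorphism $e(\cV) \xrightarrow{\sim} \Hom_{H(M'_2)_R}(\St_Q^{H(G)}(R), \St_Q^{H(G)}(\cV))$ (note $\Hom_A(V, W\otimes_R V) = \Hom_{H(M'_2)_R}(\St_Q^{H(G)}(R), \St_Q^{H(G)}(\cV))$, with the $b\varphi = b_{W\otimes V}\circ\varphi\circ b_V^{-1}$ action, which by Lemma~\ref{lem:tensorstHG} is precisely the $H(G)_R$-structure used on the $\Hom$ space). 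The second natural map of part~1, $\Hom_{H(M'_2)_R}(\St_Q^{H(G)}(R), \St_Q^{H(G)}(\cV)) \otimes_{\mathbb Z} \St_Q^{H(G)} \to \St_Q^{H(G)}(\cV)$, is the $H(G)_R$-isomorphism noted after Corollary~\ref{cor:latticeA2} (``the natural map $\Hom_A(V, W\otimes_R V)\otimes_R V \to W\otimes_R V$ is also an isomorphism of $A'$-modules''), combined with the identification $\St_Q^{H(G)}(\cV) \simeq e(\cV)\otimes_R \St_Q^{H(G)}(R)$.

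One technical point I would be careful about, and which I expect to be the main obstacle, is checking that the two candidate $H(G)_R$-module structures in sight agree: the one coming from the explicit isomorphism $\St_Q^{H(G)}(\cV) \simeq e(\cV) \otimes_R \St_Q^{H(G)}(R)$ (cokernel description \eqref{eq:IcV}, via \cite[Cor.3.17, Cor.3.6]{AHenV2}) and the diagonal-$*$-action structure of Lemma~\ref{lem:tensorstHG}. This compatibility is implicit in \S\ref{S:V2} but must be pinned down so that the abstract Corollary~\ref{cor:latticeA2} applies to the module $\St_Q^{H(G)}(\cV)$ as defined by \eqref{eq:IcV}; once the two structures are matched, the rest is a direct translation. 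The remaining verifications — that the inverse of the lattice map has the stated form $X \mapsto \{y \in e(\cV) : y\otimes_{\mathbb Z}\St_Q^{H(G)} \subset X\}$, and the $H(G)_R$-linearity of the natural maps — are routine, being the contents of Theorem~\ref{thm:latticeA2}'s last paragraph and the $b$-action bookkeeping of Corollary~\ref{cor:latticeA2}. $\square$
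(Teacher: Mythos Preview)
Your proposal is correct and follows exactly the paper's approach: the paper's proof is a one-sentence invocation of Theorem~\ref{thm:latticeA2} (and its Corollary~\ref{cor:latticeA2}) with $A=H(M'_2)_R\subset A'=H(G)_R$ via the $*$-basis, $V=\St_Q^{H(G)}(R)$, and $W=e(\cV)$, precisely as you describe. One small slip: $\St_Q^{H(G)}(R)=e(\St_Q^{H(M_2)}(R))$ extends an $H(M_2)_R$-module (not an $H(M)_R$-module), and that is why $T^*(w)$ for $w\in W_{M'}$ acts trivially on it---your conclusion is right, only the parenthetical justification needs this swap.
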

\begin{proof}We are in the setting of  Theorem \ref{thm:latticeA2} for $A=H(M'_2)_R\simeq \theta(H(M'_2)_R)\subset A'=H(G)_R$ with the  $* $- basis,  the right $H(G)_R$-module $\St_Q^{H(G)}(R)=e(\St_{Q}^{H(M_2)}(R))$, absolutely simple as an $\theta(H(M'_2)_R)$-module where $T^*_w$ for $w\in W \setminus W_{M'_2}$  (contained in $W_{M' }\Omega$) acts invertibly, and  the right $H(G)_R$-module $e(\cV)$ where $T^*_w$ for $w\in W_{M'_2}$ acts by the identity.
\end{proof}

 \subsection{The module $I_{H(G)}(P,\cV, Q)$}
\label{S:V4} 

\

   \begin{madef}\label{def:triH} An $R$-triple $ (P,\cV,Q)$ of $H(G)$ consists of   a parabolic subgroup $P=MN$ of $G$,  a right $H(M)_R$-module $\cV$,  $Q$  a parabolic subgroup of $G$ with  $P\subset Q\subset P(\cV)$. 
 To an $R$-triple $ (P,\cV,Q)$ of $H(G)$ corresponds  a  right $H(G)_R$-module 
 $$I_{H(G)}(P,\cV, Q)= \Ind_{P(\cV)}^{H(G)}( \St_{Q}^{H(M (\cV))}(\cV))$$  
 isomorphic to the cokernel of the $H(G)_R$-homomorphism
 $$\oplus_{Q \subsetneq Q_1\subset P(\cV)}\Ind_{Q_1}^{H(G)} (e_{Q_1}(\cV))\xrightarrow{\oplus_{Q \subsetneq Q_1\subset P(\cV)}\,\iota^G(Q_1,Q)}\Ind_{Q}^{H(G)} (e_{Q}(\cV))$$ 
where  $\iota^G(Q_1,Q)=\Ind_{P(\cV)}^{H(G)} (\iota^{M(\cV)}(Q\cap M(\cV), Q_1\cap M(\cV))).$
  
\end{madef}

     We can recover $e(\cV)$ from $I_{H(G)}(P,\cV, Q)$ and $P (\cV)$:  
\begin{equation} \label{eq:ecv}e(\cV)\simeq   \Hom_{H(M'_{\cV}),\theta^* }( \St_{Q}^{H(M(\cV))}(R), L_{H(M(\cV))}^{H(G)}(I_{H(G)}(P,\cV, Q)))
\end{equation}
by Proposition \ref{prop:tensorstHG} and   Proposition \ref{prop:RI}(iii):
\begin{equation} \label{eq:Lst}L_{H(M(\cV))}^{H(G)}(I_{H(G)}(P,\cV, Q)))\simeq \St_{Q}^{H(M (\cV))}(\cV) 
\end{equation}

 \begin{maprop}\label{prop:tensorstH} Let $ (P,\cV,Q)$ be an $R$-triple of $H(G)$ with $\cV$  of  finite length  and such that  for each  irreducible subquotient  $\tau$ of $\cV$, $P(\cV)=P(\tau)$ and $I_{H(G)}(P,\tau,Q)$ is simple.  Then  $P(\cV)=P(\cV')$ for any non-zero $H(M)_{ R}$-submodule  $\cV'$ of $\cV$;  the map 
 $\cV'\mapsto I_{H(G)}(P,\cV',Q): \mathcal L_{\cV}\to  \mathcal L_{I_{H(G)}(P,\cV,Q})$   is a  lattice isomorphism.\end{maprop}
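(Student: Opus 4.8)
The plan is to mirror, step for step, the proof of Proposition \ref{prop:tensorstG}, replacing the smooth parabolic induction by $\Ind_{P(\cV)}^{H(G)}$ and the extension $e(\sigma)$ by $e(\cV)$, and feeding the result into Theorem \ref{thm:latticeA}. First I would dispose of the assertion on $P(\cV')$. From the description of $\Delta_\cV$ it is immediate that $\Delta(\cV)\subset\Delta(\cV')$, i.e. $P(\cV)\subset P(\cV')$, for every non-zero $H(M)_R$-submodule $\cV'$ of $\cV$ (the defining condition on $\cV$ a fortiori holds on $\cV'$). Since $\cV'$ is a submodule of the finite length module $\cV$ it has finite length, hence contains a simple submodule $\tau$; applying the same inclusion to $\tau\subset\cV'$ gives $P(\cV)\subset P(\cV')\subset P(\tau)$, and the hypothesis $P(\tau)=P(\cV)$ forces $P(\cV')=P(\cV)$. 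In particular $M(\cV')=M(\cV)$ and $e(\cV')$ makes sense as an $H(M(\cV))_R$-module for every such $\cV'$.

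Next I would build a lattice isomorphism $\mathcal L_\cV\to\mathcal L_{\St_Q^{H(M(\cV))}(\cV)}$. Working inside the group $M(\cV)$ — for which $\cV$ has its associated parabolic equal to the whole group — Proposition \ref{prop:tensorstHG} applies (with $M_2'$ there being the Levi of the component of $\Delta_\cV$ inside $\Delta_{M(\cV)}$) and gives that $Y\mapsto Y\otimes_R\St_Q^{H(M(\cV))}(R)$ is a lattice isomorphism $\mathcal L_{e(\cV)}\to\mathcal L_{\St_Q^{H(M(\cV))}(\cV)}$, together with the two natural $H(M(\cV))_R$-isomorphisms of part 1 of that Proposition. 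Composing with the lattice isomorphism $\mathcal L_\cV\to\mathcal L_{e(\cV)}$ afforded by the extension to $H(M(\cV))$ (the Remark following the definition of $e(\cV)$), and using $\St_Q^{H(M(\cV))}(\cV')\simeq e(\cV')\otimes_R\St_Q^{H(M(\cV))}(R)$ \cite[Cor.3.17, Cor.3.6]{AHenV2}, one obtains that $\cV'\mapsto\St_Q^{H(M(\cV))}(\cV')$ is a lattice isomorphism $\mathcal L_\cV\to\mathcal L_{\St_Q^{H(M(\cV))}(\cV)}$. In particular $W:=\St_Q^{H(M(\cV))}(\cV)$ has finite length (equal to that of $\cV$) and its simple subquotients are precisely the $\St_Q^{H(M(\cV))}(\tau)$ with $\tau$ a simple subquotient of $\cV$.

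Finally I would invoke Theorem \ref{thm:latticeA} for $F=\Ind_{P(\cV)}^{H(G)}$ with right adjoint $G=R_{P(\cV)}^{H(G)}$ and with the finite length object $W$. Property a) holds because $\Ind_{P(\cV)}^{H(G)}$ is fully faithful (Proposition \ref{prop:RI}(iii)), so the unit of the adjunction is an isomorphism (Remark \ref{rem:ss3}); property b) holds because $\Ind_{P(\cV)}^{H(G)}$, admitting both a left and a right adjoint, is exact; property c) holds because for a simple subquotient $\tau$ of $\cV$ one has $F(\St_Q^{H(M(\cV))}(\tau))=I_{H(G)}(P,\tau,Q)$, which is simple by hypothesis. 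Hence $F$ induces a lattice isomorphism $\mathcal L_W\to\mathcal L_{F(W)}=\mathcal L_{I_{H(G)}(P,\cV,Q)}$, and composing it with the isomorphism of the previous paragraph yields $\cV'\mapsto\Ind_{P(\cV)}^{H(G)}(\St_Q^{H(M(\cV))}(\cV'))=I_{H(G)}(P,\cV',Q)$ as a lattice isomorphism $\mathcal L_\cV\to\mathcal L_{I_{H(G)}(P,\cV,Q)}$, as desired.

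I do not expect a genuine obstacle: the structural inputs (Theorem \ref{thm:latticeA}, Proposition \ref{prop:tensorstHG}, and the exactness and full faithfulness of $\Ind_{P(\cV)}^{H(G)}$) are already in place, so the argument is formal just as for the group. The only points requiring a little care are bookkeeping: checking that Proposition \ref{prop:tensorstHG}, stated for $P(\cV)=G$, transports cleanly to the ambient group $M(\cV)$ with the correct orthogonal-complement Levi, and matching $\St_Q^{H(M(\cV))}(\cV')$ with $e(\cV')\otimes_R\St_Q^{H(M(\cV))}(R)$ so that the three successive lattice isomorphisms compose on the nose.
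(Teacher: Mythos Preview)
Your proposal is correct and follows essentially the same route as the paper's proof: establish $P(\cV')=P(\cV)$ exactly as in Proposition~\ref{prop:tensorstG}, obtain the lattice isomorphism $\mathcal L_\cV\to\mathcal L_{\St_Q^{H(M(\cV))}(\cV)}$ via the tensor-product lattice theorem (the paper invokes Theorem~\ref{thm:latticeA2} directly, you go through its packaged form Proposition~\ref{prop:tensorstHG}, which is the same thing applied inside $M(\cV)$), and then apply Theorem~\ref{thm:latticeA} to $F=\Ind_{P(\cV)}^{H(G)}$ using that it is exact and fully faithful (Proposition~\ref{prop:RI}(iii)).
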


\begin{proof} $P(\cV)=P(\cV')$ is proved as  Proposition \ref{prop:tensorstG}. We are in the situation of  
 Theorem \ref{thm:latticeA2} when $A=H(M'_\cV)_R\simeq \theta(H(M'_\cV)_R) \subset A'=H(M(\sigma))_R$ with the $*$-basis,  the $R$-representations $\St_Q^{H(M(\cV))}(R)$ and $e(\cV)$ of $M(\cV)$.  So   $\St_Q^{H(M(\cV))}(\cV)$  has finite length, and 
 its irreducible subquotients  are 
 $ \St_Q^{H(M(\cV))}(\tau)$ for the irreducible subquotients $\tau$ of $\cV$.    If 
 $I_G(P, \tau,Q)= \Ind_{P(\cV)}^G ( \St_Q^{M(\cV)}(\tau))$ is  irreducible for all $\tau$, we are in the situation of   Theorem \ref{thm:latticeA} for $F=\Ind_{P(\cV)}^{H(G)}$ and $W= \St_Q^{H(M(\cV))}(\cV)$ because 
 $\Ind_{P(\cV)}^{H(G)} $  has a right adjoint and  is exact  fully faithful (Proposition \ref{prop:RI} (iii)) so the map
 $\cV'\mapsto I_G(P,\cV',Q): \mathcal L_{\cV}\to  \mathcal L_{I_G(P,\cV,Q)}$   is a  lattice isomorphism.
 \end{proof}

  We  check  now that  the compatibility of $I_{H(G)}(P,\cV,Q)$   with  scalar extension, as for $I_{ G}(P,\sigma,Q)$ (Proposition \ref{prop:es2}).

   \begin{maprop}\label{prop:scalarLH} (i)   Let $R'/R$ be a field extension. The parabolic induction commutes with the scalar restriction  from $R'$ to $R$ and with the scalar extension from $R$ to $R'$. Hence
  the  left (resp. right) adjoint of the parabolic induction commutes with scalar extension (resp. restriction). This is valid for any commutative rings $R\subset R'$.

(ii)  Let $R'/R$ be a field extension. For  an  $H(M)_{R'}$-module  $\cV'$ 
 and an  $H(G)_{R}$-module $\cx$  such that $\Ind_P ^{H(G)} \cV'\simeq \cx_{R'}$, we have  $\cV'  \simeq (L_P^{H(G)}\cx)_{R'} $.
  
(ii)  Let $R'/R$ be an extension. For an $R$-triple  $(P,\cV,Q)$ of $H(G)$ we have:
  
$P(\cV)= P(\cV_{R'})$;  if $\cV$ is simple and $\cV'$ a subquotient   of $ \cV_{R'}$, then $P(\cV)=P(\cV')$.
   
  $  (e(\cV))_{R'}= e(\cV_{R'})  , \ \St_Q^{P(\cV)}(\cV)_{R'}\simeq  \St_Q^{P(\cV)}(\cV_{R'})$, and  $I_{H(G)}(P,\cV,Q)_{R'}\simeq I_{H(G)}(P,\cV_{R'},Q)$.

 (iii)  Let $R'$ be a subfield of $R$  and $(P,\cV,Q)$   an $R$-triple of $H(G)$ such that 
 $e(\cV)$, resp.  $  \St_Q^{H(M(\cV))}(\cV)$, resp. $I_{H(G)}(P,\cV,Q)$,   is the scalar extension of a $H(M(\cV))_{R'}$-module $\tau$, resp. $\rho$, resp. a $H(G)_{R'}$-module $\pi$.  
 
 Then  $\cV$ is the scalar extension to $R$ of the natural  action of  $H(M)_{R'}$ on 
$\tau $, resp. $ \St_{Q}^{P(\cV)} (\rho) $,  resp. $ \Hom_{H(M'_{\cV})_R}( \St_Q^{P(\cV)}(R),L_{P(\sigma)}^{H(G)}\pi) $.

\end{maprop}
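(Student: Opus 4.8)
The plan is to prove Proposition \ref{prop:scalarLH} by closely following the template of the group case (Proposition \ref{prop:es2}), replacing the smooth parabolic induction $\Ind_P^G$ and the functor $L_P^G$ by their Hecke-algebra analogues $\Ind_P^{H(G)}$ and $L_P^{H(G)}$, and using the structural results of \S\ref{S:V2} and \S\ref{S:V4} in place of their group counterparts.

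\smallskip\noindent\textbf{Part (i).} First I would observe that $\Ind_P^{H(G)} = -\otimes_{H(M^+),\theta_M^G} H(G)$, and that for commutative rings $R\subset R'$ a base change $H(M)_{R'} = H(M)\otimes_{\Z} R'$ (and similarly for $H(G)$ and $H(M^+)$) together with associativity of tensor products gives $(\mathcal V\otimes_{H(M^+)}H(G))\otimes_R R' \simeq (\mathcal V\otimes_R R')\otimes_{H(M^+)_{R'}}H(G)_{R'}$; this is the commutation with scalar extension. Commutation with scalar restriction is the assertion that restricting $\Ind_P^{H(G)}(\mathcal V)$ from $R'$ to $R$ yields $\Ind_P^{H(G)}$ of the restriction of $\mathcal V$, which is immediate since the construction only uses the ring $H(G)$ (over $\Z$) and is $R'$-linear. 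From this, the statement about adjoints is the formal fact that if $F$ commutes with a functor and both sides have the relevant adjoints, the adjoint of $F$ commutes with the adjoint functor: here scalar extension $(-)_{R'}$ is left adjoint to scalar restriction, so $\Ind_P^{H(G)}\circ(-)_{R'}\simeq (-)_{R'}\circ\Ind_P^{H(G)}$ gives, by passing to left adjoints, $(-)_{R'}\circ L_P^{H(G)}\simeq L_P^{H(G)}\circ(-)_{R'}$ (scalar extension), and dually for $R_P^{H(G)}$ with scalar restriction. I also record $L_P^{H(G)} = -\otimes_{H(M^+),\theta_M^{G,*}}H(M)$ from \eqref{eq:RL} makes the scalar-extension commutation for $L_P^{H(G)}$ even more transparent.

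\smallskip\noindent\textbf{First numbered (ii).} For $\cV'$ an $H(M)_{R'}$-module and $\cx$ an $H(G)_R$-module with $\Ind_P^{H(G)}\cV'\simeq \cx_{R'}$: apply $L_P^{H(G)}$, use (i) to get $L_P^{H(G)}(\cx_{R'})\simeq (L_P^{H(G)}\cx)_{R'}$, and use that $\Ind_P^{H(G)}$ is fully faithful (Proposition \ref{prop:RI}(iii)), so the unit of the adjunction $(L_P^{H(G)},\Ind_P^{H(G)})$... — actually the counit $L_P^{H(G)}\circ\Ind_P^{H(G)}\to\id$ is an isomorphism by Proposition \ref{prop:RI}(iii), hence $L_P^{H(G)}(\Ind_P^{H(G)}\cV')\simeq\cV'$; combining, $\cV'\simeq(L_P^{H(G)}\cx)_{R'}$.

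\smallskip\noindent\textbf{Second (ii), the compatibility statements.} The equality $P(\cV)=P(\cV_{R'})$ follows from the definition of $\Delta_{\cV}$: the condition $v = v\,T^{M,*}(z)$ for all $v$ is a condition on the $H(M)_R$-module structure, and since $\cV_{R'}$ is, as an $H(M)_R$-module, a direct sum of copies of $\cV$, the set $\Delta_\cV$ is unchanged; if $\cV$ is simple then any subquotient $\cV'$ of $\cV_{R'}$ is $\cV$-isotypic as an $H(M)_R$-module (by (P1) applied to $\cV$ over $R$), giving $P(\cV)=P(\cV')$. Next, $e(\cV)$ is literally $\cV$ with a specified extension of the $*$-action, so $(e(\cV))_{R'}=e(\cV_{R'})$ is formal. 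For $\St_Q^{P(\cV)}(\cV)_{R'}\simeq\St_Q^{P(\cV)}(\cV_{R'})$: this is the cokernel of the map \eqref{eq:IcV} (relative to $M(\cV)$), which commutes with $(-)_{R'}$ because scalar extension is exact and $\Ind^{H(\cdot)}$ commutes with it by (i), together with the compatibility of the maps $\iota$; equivalently one can invoke $\St_Q^{H(M(\cV))}(\cV)\simeq e(\cV)\otimes_R\St_Q^{H(M(\cV))}(R)$ (Proposition \ref{prop:tensorstHG}, with $M(\cV)$ in place of $G$) which obviously commutes with scalar extension. Finally $I_{H(G)}(P,\cV,Q)_{R'}\simeq I_{H(G)}(P,\cV_{R'},Q)$ follows by applying $\Ind_{P(\cV)}^{H(G)}$ and using (i) and the previous isomorphism; note $P(\cV)=P(\cV_{R'})$ is used here so that the inducing subgroup matches.

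\smallskip\noindent\textbf{Third (iii), descent.} Suppose $I_{H(G)}(P,\cV,Q)\simeq\pi_R$ for an $H(G)_{R'}$-module $\pi$. Applying $L_{H(M(\cV))}^{H(G)}$ and using (i) together with \eqref{eq:Lst} gives $\St_Q^{H(M(\cV))}(\cV)\simeq(L_{H(M(\cV))}^{H(G)}\pi)_R$. If instead $\St_Q^{H(M(\cV))}(\cV)\simeq\rho_R$, apply $\Hom_{H(M'_\cV)_R}(\St_Q^{H(M(\cV))}(R),-)$ and use the $H(M(\cV))_R$-isomorphism $e(\cV)\simeq\Hom_{H(M'_\cV)_R}(\St_Q^{H(M(\cV))}(R),\St_Q^{H(M(\cV))}(\cV))$ from Proposition \ref{prop:tensorstHG}(1), noting that this Hom commutes with scalar extension because $\St_Q^{H(M(\cV))}(R)=\St_Q^{H(M(\cV))}(\Z)_R$ is, up to scalars, a finitely presented $H(M'_\cV)$-module (use Remark \ref{rem:2}); this identifies $e(\cV)$ with $(\Hom_{H(M'_\cV)_{R'}}(\St_Q^{H(M(\cV))}(R'),\rho))_R$. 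Finally if $e(\cV)\simeq\tau_R$ then $\cV$, being the restriction of $e(\cV)$ to $H(M)$, is the scalar extension of the restriction of $\tau$ to $H(M)_{R'}$, because restriction commutes with scalar extension. Chaining these three reductions — from $I_{H(G)}$ to $\St_Q^{H(M(\cV))}(\cV)$ to $e(\cV)$ to $\cV$ — proves (iii) in all three cases; the stated formulas for $\cV$ in terms of $\tau,\rho,\pi$ are exactly the composites just described.

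\smallskip\noindent\textbf{Main obstacle.} The only non-formal point is the commutation of the various $\Hom$ functors with scalar extension in (iii): one must verify that $\St_Q^{H(M(\cV))}(R)$ (equivalently $(\Ind_Q^G(R))^I$ and its quotients) is a finitely generated, indeed finitely presented, module over the relevant Hecke subring so that the canonical map \eqref{eq:sce} of Remark \ref{rem:2} is bijective; this rests on $H(G)$ being finitely generated over its center (\S\ref{s:ring}) and hence Noetherian-like finiteness of the relevant modules, and on $R'/R$ being a field extension which already suffices for bijectivity in Remark \ref{rem:2}. Everything else is a transcription of the proof of Proposition \ref{prop:es2} with $L_P^{H(G)}$ exact (Remark after \eqref{eq:RL}), which if anything simplifies the argument relative to the group case.
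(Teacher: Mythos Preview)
Your proof is correct and follows essentially the same route as the paper's: part (i) by tensor-product associativity and formal adjunction, the first (ii) from full faithfulness of $\Ind_P^{H(G)}$, the second (ii) from the $\cV$-isotypicity of $\cV_{R'}$ together with $\St_Q^{H(M(\cV))}(\cV)\simeq e(\cV)\otimes_R\St_Q^{H(M(\cV))}(R)$, and (iii) by chaining $I_{H(G)}\to\St_Q\to e(\cV)\to\cV$ via \eqref{eq:Lst} and Proposition~\ref{prop:tensorstHG}. One small sharpening of your ``main obstacle'': the Hom--base-change step is most cleanly justified by observing that $\St_Q^{H(M(\cV))}(R')$ is a \emph{simple} $H(M(\cV))_{R'}$-module and hence has finite $R'$-dimension (\S\ref{s:ring}), so it is certainly finitely generated over $H(M'_\cV)_{R'}$ and Remark~\ref{rem:2} applies directly---no Noetherian argument is needed, and the field-extension hypothesis alone does not suffice in Remark~\ref{rem:2}.
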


\begin{proof}  (i)  As for the group (Prop. \ref{prop:scalarL}). it is clear that  the parabolic induction \eqref{eq:indH} commutes with the scalar restriction  from $R$ to $R'$ and with the scalar extension from $R'$ to $R$. Hence
  the  left (resp. right) adjoint of the parabolic induction commutes with scalar extension (resp. restriction).

(ii)  Let  $\cV$ be an  $H(M)_R$-module. As  an $H(M)_R$-module,  $\cV_{R'}  $ is  $\cV$-isotypic and this holds true for any subquotient  $\cV'$ of  $\cV_{R'}$  if $\cV$ is simple. For   $\alpha\in \Delta$ orthogonal to $\Delta_P$,  $T^{M,*}(z)$ for $z\in Z\cap M_\alpha'$ acts trivially on  $H(M)_{R'}$-module  if and only if   it acts trivially on this module seen as an $H(M)_{R}$-module.  So $P(\cV)= P(\cV_{R'})$ and  if $\cV$ is simple $P(\cV)=P(\cV')$. It is clear that   $(e(\cV))_{R'}\simeq  e(\cV_{R'})$. As
$$e(\cV) \otimes_R\St_Q^{H(M(\cV) )}(R)\simeq  e(\cV) \otimes_{\mathbb Z} \St_Q^{H(M(\cV)  )}\simeq    \St_Q^{H(M(\cV)  )}(\cV),$$
we have $\St_Q^{H(M(\cV)  )}(\cV)_{R'}\simeq \St_Q^{H(M(\cV)  )}(\cV_{R'})$. As $\Ind_P^{H(G)}$ commutes with scalar extension (i),  we have $I_{H(G)}(P,\cV,Q)_{R'}=(\Ind_{P(\cV)}^{H(G)}(St_Q^{H(M(\cV)  )}(\cV)))_{R'} \simeq I_{H(G)}(P,\cV_{R'},Q)$.

(iii) If $I_{H(G)}(P,\cV,Q)=\pi_R$ then $\St_{Q}^{H(M (\cV))} (\cV)  =\rho_R$  where 
$\rho \simeq L_{H(M(\cV))}^{H(G)}(\pi)$ by (i) and \eqref{eq:Lst}.

If $\St_{Q}^{H(M (\cV))} (\cV)  =\rho_R$, then  $e(\cV)=\tau_R$ where $ \tau\simeq\Hom_{H(M'_{\cV})_{R'} }(\St_{Q}^{H(M (\cV))}(R'), \rho)$ as   $e(\cV)\simeq   \Hom_{H(M'_{\cV})_{R} }(\St_{Q}^{H(M (\cV))}(R),\rho_R) $ (Prop. \ref{prop:tensorstHG}) and $\St_{Q}^{H(M (\cV))}(R)\simeq (\St_{Q}^{H(M (\cV))}(R'))_R$.
   
 If $e(\cV)= \tau_{R}$ then $T^{M(\cV),*}(m)$ acts trivially on $\tau_{R}$ for $m\in M'_{\cV}$ hence  also on $\tau$ and $\cV=\cz_R$ where $\cz$ is the restriction of $\tau$ to $H(M)_R$.
\end{proof}

\begin{remark}\label{rem:tensorstHG}{\rm   Proposition \ref{prop:tensorstH} applies to  the scalar extension $\cV _{R^{alg}}$  to $R^{alg}$ of  a simple supersingular $H(M)_R$-module $\cV$; the proof is the same as for the group (Remark \ref{rem:tensorstG}). By  the decomposition theorem of  $\cV _{R^{alg}}$ and  Lemma \ref{lem:ss} all irreducible subquotients $\tau$ of  $\cV _{R^{alg}}$ are supersingular,  $P(\tau)=P(\cV)=  P(\cV_{R^{alg}})$  (Prop.\ref{prop:scalarLH} (ii)), and $I_{H(G)}(P,\tau,Q)$ is irreducible  by the classification theorem for $H(G)$  over  $ R^{alg}$ (Thm.\ref{thm:classHG} \cite{AHenV2}). }
\end{remark}

  \subsection{Classification of simple modules over the pro-$p$ Iwahori Hecke algebra} \label{S:6}   
As in  \S\ref{S:3.4} for $G$   the classification theorem for $H(G)$  over  $R^{alg}$   (Thm.\ref{thm:classHG}) descends to $R$  by a formal proof  relying on  the  properties:
    
  {\bf 1}  The decomposition theorem  for $H(G)$   (Thm.\ref{thm:DA}).         

{\bf 2}     The classification theorem for $H(G)$ over  $ R^{alg}$ (Thm.\ref{thm:classHG} \cite{AHenV2}).

{\bf 3}  The compatibility of   scalar extension with  $I_{H(G)}(P, -, Q)$ (Prop. \ref{prop:scalarLH}) and  supersingularity   (Lemma  \ref{lem:ss}).

 {\bf 4}  The lattice isomorphism $\mathcal L_{\cV_{R^{alg}}}\to \mathcal L_{I_{H(G)}(P,\cV_{R^{alg}},Q)}$  for  the scalar extension $\cV_{R^{alg}}$ to $R^{alg}$ of a simple supersingular  $H(M)_R$-module  $\cV$   (Prop.\ref{prop:tensorstHG}  and  Remark  \ref{rem:tensorstHG}).

\bigskip  We start the proof with an arbitrary simple  $H(G)_R$-module  $\cx$. By the decomposition theorem,    the $H(G)_{R^{alg}}$-module $\cx_{R^{alg}} $   has finite length;   we choose a simple submodule $\cx^{alg}$ of $\cx_{R^{alg}} $.  By the classification theorem  over $R^{alg}$, 
\begin{equation}\label{eq:cx} \cx^{alg}  \simeq I_{H(G)}(P, \cV^{alg},Q)
\end{equation}
for an $R^{alg}$-triple
$(P=MN, \cV^{alg},Q)$  of $G$ where   $\cV^{alg} $   is a simple  supersingular $H(M)_{R^{alg}}$-module. By the decomposition theorem, $\cx^{alg}$ descends to a finite extension of $R$, and also $\cV^{alg} $ by compatibility of   scalar extension with  $I_{H(G)}(P, -, Q)$. By the decomposition theorem,
$ \cV^{alg} $ is contained in   the scalar extension $\cV_{R^{alg}}$  to   $R^{alg}$
 of a simple  $H(M)_R$-module  $\cV$. By  compatibility   of   scalar extension with  $I_{H(G)}(P, -, Q)$ and supersingularity,  $\cV $  is supersingular, $(P,\cV, Q)$ is an $R$-triple of $G $  and 
 \begin{equation}\label{eq:cv} I_{H(G)}(P,\cV_{R^{alg}},Q)\simeq I_{H(G)}(P,\cV,Q)_{R^{alg}}.
\end{equation} We have $I_{H(G)}(P,\cV^{alg},Q) \subset I_{H(G)}(P,\cV_{R^{alg}},Q)$ by the lattice  isomorphism  $\mathcal L_{\cV _{R^{alg}}}\to \mathcal L_{I_{H(G)}(P,\cV _{R^{alg}},Q)}$.    
The decomposition theorem implies  
 $$\cx\simeq I_{H(G)}(P,\cV,Q).$$

Conversely,
we  start with an  
 $R$-triple $(P, \cV, Q)$  of $H(G)$ with  $\cV $  simple  supersingular and we prove that $I_{H(G)}(P,\cV, Q)$ is simple.    By the decomposition theorem,  the $H(G)_{R^{alg}}$-module  $\cV_{R^{alg}}$ has finite length, and   $ I_{H(G)}(P,\cV_{R^{alg}}, Q)$ also  by the lattice  isomorphism  $\mathcal L_{\cV _{R^{alg}}}\to \mathcal L_{I_{H(G)}(P,\cV _{R^{alg}},Q)}$. The scalar extension is faithful and exact and $I_{H(G)}(P,\cV, Q)_{R^{alg}}  \simeq I_{H(G)}(P,\cV_{R^{alg}}, Q)$  so $I_{H(G)}(P,\cV, Q)$ has also  finite length. 
 We choose  a simple $H(G)_R$-submodule $\cx$  of $I_{H(G)}(P,\cV, Q)$.  The  $H(G)_{R^{alg}}$-module  $\cx_{R^{alg}}$ is contained in $I_{H(G)}(P,\cV, Q)_{R^{alg}}$ hence  $\cx_{R^{alg}}\simeq I_{H(G)}P,\cV', Q) $ for an $H(M)_{R^{alg}}$-submodule $\cV' $ of $\cV_{R^{alg}}$ by (\ref{eq:cv}) and the lattice  isomorphism  $\mathcal L_{\cV _{R^{alg}}}\to \mathcal L_{I_{H(G)}(P,\cV _{R^{alg}},Q)}$. As $I_{H(G)}(P,\cV', Q) $ descends to $R$, $\cV'$ is also. But  no proper $H(M)_{R^{alg}}$-submodule of $\cV_{R^{alg}}$  descends to $R$ by the decomposition theorem for $H(G)$,  so $\cV'=\cV_{R^{alg}}$,  $\cx_{R^{alg}}= I_{H(G)}(P,\cV_{R^{alg}}, Q)$ and $\cx_{R^{alg}}\simeq I_{H(G)}(P,\cV, Q)_{R^{alg}}$  by compatibility of   scalar extension with  $I_{H(G)}(P, -, Q)$.    So $ \cx\simeq I_{H(G)}(P,\cV, Q) $  and $I_{H(G)}(P,\cV, Q)$ is simple.

Finally, let  $(P, \cV, Q)$ and  $(P_1, \cV_1, Q_1)$ be  two  $R$-triples  of ${H(G)}$ with  $\cV, \cV_1 $ simple supersingular and $ I_{H(G)}(P,\cV, Q) \simeq  I_{H(G)}(P_1,\cV_1, Q_1)$. The scalar extensions to $R^{alg}$ are isomorphic 
  $( I_{H(G)}(P,\cV, Q))_{R^{alg}} \simeq ( I_{H(G)}(P_1,(\cV_1), Q_1))_{R^{alg}} $. The   classification theorem for $H(G)$ over $R^{alg}$ and (\ref{eq:cv})  imply $P=P_1, Q=Q_1$ and some simple $H(M)_{R^{alg}}$-subquotient $\cV^{alg}$ of $\cV_{R^{alg}}$ is isomorphic to some  simple $H(M)_{R^{alg}}$-subquotient $\cV_1^{alg}$ of $(\cV_1)_{R^{alg}}$.  As  $\cV^{alg}$ is $\cV$-isotypic and $\cV_1^{alg}$ is  $\cV_1$-isotypic as $H(M)_R$-module,  $\cV$ and $ \cV_1 $ are isomorphic.
 
 This ends the proof of the  classification theorem for $H(G)$ (Thm.\ref{thm:classHG}). $\square$
 
 \section{Applications}\label{S:5}
 
 Let $R$ be a field of characteristic $p$ and $G$ a reductive $p$-adic group as in \S\ref{S:II.1}.

\subsection{Vanishing of the smooth dual}\label{S:3.5} 

The dual of   $\pi\in \Mod_R(G)$  is $ \Hom_R(\pi,R)$ with the contragredient action of $G$, that is,  $(gf)(gx)=f(x)$ for $g\in G, f\in \Hom_R(\pi,R), x\in \pi$. 
 The smooth dual of $\pi$ 
   is $\pi^\vee := \cup_K\Hom_R(\pi,R)^K$ where $K$ runs through the open compact subgroups of $G$.

A  finite dimensional  smooth  $R$-representation of $G$ is fixed by an open compact subgroup, and its smooth dual  is equal to its dual.

\bigskip We prove Thm. \ref{thm:vsd}.
 Let $R^{alg}/R$ be an algebraic closure and let $\pi$ be a non-zero irreducible admissible $R$-representation $\pi$ of $G$. By Remark \ref{rem:2}, $(\pi^\vee)_{R^{alg}}\subset (\pi_{R^{alg}})^\vee$.  Assume that $\pi^\vee\neq 0$. Then,  $(\pi^\vee)_{R^{alg}}\neq 0$, hence  $(\pi_{R^{alg}})^\vee \neq 0$ implying  $\rho^\vee \neq 0$  for some irreducible subquotient $\rho$ of $\pi_{R^{alg}}$. By the theorem over $R^{alg}$ \cite[Thm.6.4]{AHenV2}, the $R^{alg}$-dimension of $\rho$ is finite. The $R^{alg}$-dimension is constant on the $\Aut_R(R^{alg})$-orbit  of $\rho$. By the decomposition theorem (Thm. \ref{thm:DG}), the $R^{alg}$-dimension 
 of $\pi_{R^{alg}}$ is finite. It is equal to the $R$-dimension of $\pi$. So we proved that $\pi^\vee\neq 0$ implies that the $R$-dimension of $\pi$ is finite. $\square$

\subsection{Lattice of submodules {\rm (Proof of Theorem \ref{thm:lattice})}}\label{S:54}

\

\subsubsection{}\label{ss:sub1} Before proving Thm. \ref{thm:lattice}, we recall   some properties  of the invariant functor $(-)^I:\Mod_R^\infty(G)\to \Mod_R (H(G))$ (see \S\ref{s:ira}) which are specific to  the pro-$p$ Iwahori group $I$. Denote by $\sigma$ 
a smooth $R$-representation of $M$ and  by  $\cV$ a   $H(M)_R$-module.

1.  The parabolic induction commutes with $(-)^I$ \cite[Prop.4.4]{OV} and with  $- \otimes_{H(G)_R}R[I\backslash G]$ \cite[Cor.4.7]{OV}: $$(\Ind_P^G \sigma)^I\simeq \Ind_P^{H(G)} (\sigma^{I\cap M}), \ 
 \Ind_P^{H(G)}\cV \otimes_{H(G)_R}R[I\backslash G]\simeq \Ind_P^G (\cV \otimes_{H(M)_R}R[(I\cap M)\backslash M]).$$

2. The last isomorphism and the faithfulness  $\Ind_P^{H(G)}$ (Prop.\ref{prop:RI}) show that  the natural map  
 $$\eta_{\Ind_P^{H(G)}\cV}: \Ind_P^{H(G)}\cV \to  (\Ind_P^{H(G)}\cV \otimes_{H(G)}\mathbb Z[I\backslash G] )^I$$ 
is bijective if and only if   the natural map $\eta_{\cV}: \cV \to (\cV \otimes_{H(M)}\mathbb Z[(I\cap M)\backslash M])^{I\cap M}$ is.

3. The trivial $R$-representation of $G$ is naturally isomorphic to $R\otimes_{H(G)}\mathbb Z[I\backslash G])^{I}$  \cite[Lemma 2.25]{OV}.
  
4. The $I$-invariants of $I_G(P,\sigma,Q)$ is  isomorphic to $I_G(P,\sigma^{I\cap M},Q)$  when $\sigma=\sigma_{min}$  (\S\ref{S:II.1}) and $P(\sigma)=P(\sigma^{ I\cap M})$.

 \begin{monlem}\label{lem:sub1} Let $\sigma$ be an irreducible admissible supersingular $R$-representation of $M$. Then $\sigma=\sigma_{min}$, $P(\sigma)=P(\sigma^{I\cap M})$, so $I_G(P,\sigma,Q)^I\simeq I_ {H(G)}(P, \sigma^{I\cap M},Q)$.
\end{monlem}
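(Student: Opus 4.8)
The plan is to prove the three assertions of Lemma \ref{lem:sub1} in turn, deducing each from results already established in the excerpt and from the known structure of supersingular representations.

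\textbf{Step 1: $\sigma = \sigma_{\min}$.} Recall from \S\ref{S:II.1}(c) that $P_{\min} = M_{\min}N_{\min}$ is the smallest parabolic $P_{\min} \subset P$ such that $\sigma$ extends to a representation $\sigma_{\min}$ of $P_{\min}$ trivial on $N_{\min}$, and that $\Delta_{\sigma_{\min}}$ and $\Delta_{\sigma_{\min}} \setminus \Delta_{P_{\min}}$ are orthogonal, with $M(\sigma) = M_{\min} M'_{\sigma_{\min}}$ and $e(\sigma)$ trivial on $M'_{\sigma_{\min}}$. If $P_{\min} \subsetneq P$, then $\sigma$ restricted to $M'$ would factor through a proper Levi quotient coming from the orthogonal decomposition, which forces $\sigma$ (as a representation of $M$) to be obtained by parabolic induction — or more precisely, the triviality of the action of certain $Z \cap M'_\alpha$ would exhibit $\sigma$ as non-supercuspidal. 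The cleanest route is to invoke the classification theorem for $M$ (Theorem \ref{thm:classG} applied to $M$ in place of $G$): $\sigma$ irreducible admissible supersingular means $\sigma = I_M(M, \sigma, M)$ in its own right, i.e.\ $P \cap M = M$, forcing $P_{\min} = P$ and hence $\sigma = \sigma_{\min}$. Concretely, supersingularity of $\sigma^{I\cap M}$ and Theorem \ref{thm:sssc}-type reasoning (or directly the definition of $\Delta_\sigma$) give that no proper $Z \cap M'_\alpha$ acts trivially on $\sigma$ for $\alpha \in \Delta_M$ in the wrong position, which is exactly what $P_{\min} = P$ encodes.

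\textbf{Step 2: $P(\sigma) = P(\sigma^{I\cap M})$.} By definition $\Delta(\sigma) = \Delta_M \cup \Delta_\sigma$ with $\Delta_\sigma = \{\alpha \in \Delta \setminus \Delta_M : Z \cap M'_\alpha \text{ acts trivially on } \sigma\}$, and similarly $\Delta_{\sigma^{I\cap M}}$ uses the condition $v = v\,T^{M,*}(z)$ for $z \in Z \cap M'_\alpha$ and $v \in \sigma^{I\cap M}$ (Definition in \S\ref{S:V2}). The point is a compatibility between the $G$-action on $\sigma$ and the $H(M)$-action on $\sigma^{I\cap M}$: for $\alpha$ orthogonal to $\Delta_M$, the element $T^{M,*}(z)$ acts on $v \in \sigma^{I\cap M}$ by averaging the $Z \cap M'_\alpha$-translate of $v$, so $Z \cap M'_\alpha$ acts trivially on $\sigma$ if and only if $T^{M,*}(z)$ acts as the identity on $\sigma^{I\cap M}$ — using that $\sigma$ is generated by $\sigma^{I\cap M}$ (Lemma \ref{lem:admK}, since $\sigma$ is admissible and $I \cap M$ is a pro-$p$ Iwahori, so $\sigma^{I\cap M} \neq 0$ generates when $\sigma$ is irreducible). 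This gives $\Delta_\sigma = \Delta_{\sigma^{I\cap M}}$, hence $P(\sigma) = P(\sigma^{I\cap M})$. I would cite the relevant statement from \cite{OV} or \cite{AHenV2} that records this equality of $\Delta$'s, since it is standard.

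\textbf{Step 3: $I_G(P,\sigma,Q)^I \simeq I_{H(G)}(P,\sigma^{I\cap M},Q)$.} This now follows by combining Steps 1–2 with the commutation of parabolic induction and generalized Steinberg quotients with $(-)^I$. Explicitly, $I_G(P,\sigma,Q) = \Ind_{P(\sigma)}^G(\St_Q^{M(\sigma)}(\sigma))$, and applying $(-)^I$: parabolic induction commutes with $(-)^I$ by \cite[Prop.4.4]{OV} (item 1 of \S\ref{ss:sub1}), giving $\Ind_{P(\sigma)}^{H(G)}((\St_Q^{M(\sigma)}(\sigma))^{I \cap M(\sigma)})$; and the generalized Steinberg is, by \eqref{eq:IGst}, a cokernel of a sum of parabolic inductions of the extensions $e_{Q'}(\sigma)$, so since $(-)^I$ is exact on smooth representations (the pro-$p$ Iwahori invariants functor is exact in characteristic $p$) and commutes with parabolic induction, it carries this cokernel to the corresponding cokernel defining $\St_Q^{H(M(\sigma))}(\sigma^{I\cap M})$ — here one needs $(e_{Q'}(\sigma))^{I\cap M_{Q'}} \simeq e_{Q'}(\sigma^{I\cap M})$, which holds because extension-by-triviality commutes with $(-)^I$ and Step 2 guarantees the relevant parabolics match up. Putting these together yields $I_G(P,\sigma,Q)^I \simeq \Ind_{P(\sigma)}^{H(G)}(\St_Q^{H(M(\sigma))}(\sigma^{I\cap M})) = I_{H(G)}(P,\sigma^{I\cap M},Q)$, as the hypothesis $\sigma = \sigma_{\min}$ ensures the $H(G)$-triple $(P,\sigma^{I\cap M},Q)$ is well-formed.

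\textbf{Main obstacle.} The delicate point is Step 2, the equality $\Delta_\sigma = \Delta_{\sigma^{I\cap M}}$: one must match the representation-theoretic condition (triviality of $Z \cap M'_\alpha$ on $\sigma$) with the Hecke-module condition (triviality of $T^{M,*}(z)$ on $\sigma^{I\cap M}$). This requires knowing that $\sigma$ is generated over $R[M]$ by its $I\cap M$-invariants and understanding precisely how $T^{M,*}(z)$ acts — which, for $z$ in the finite group $Z \cap M'_\alpha$ of order prime to $p$, is an idempotent-type averaging operator whose fixed space is exactly the $z$-invariants. The cleanest presentation cites the analogous statement in \cite{AHenV2} (where the parabolics $P(\sigma)$ and $P(\sigma^{I\cap M})$ are defined compatibly for precisely this purpose) rather than redoing the pro-$p$ Iwahori combinatorics; I would phrase Step 2 as "by \cite[\S4]{AHenV2}" together with the admissibility of $\sigma$.
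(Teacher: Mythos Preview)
Your proof is correct and follows essentially the same approach as the paper: Step~1 via the classification theorem (Thm.~\ref{thm:classG}) for $M$, Step~2 via the fact that $\sigma$ is generated by $\sigma^{I\cap M}$ together with \cite[Thm.~3.13]{AHenV2}, and Step~3 from the commutation of $(-)^I$ with the constituents of $I_G(P,-,Q)$ (which the paper packages as item~4 just before the lemma). The paper's proof is terser, simply citing \cite[Thm.~3.13]{AHenV2} for Step~2 and noting that $\Delta_\sigma$ is orthogonal to $\Delta_M$ once $\sigma=\sigma_{\min}$, whereas you unpack more of the mechanism; your identification of Step~2 as the delicate point is apt.
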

\begin{proof}  The equality $\sigma=\sigma_{min}$  follows from the classification (Thm.\ref{thm:classG}) because $\sigma$ is supersingular {\rm (\S\ref{S:ss})}. When $\sigma=\sigma_{min}$, then $\Delta_\sigma$ is orthogonal to $\Delta_M$ (\S\ref{S:II.1}).  As $\sigma$ being  irreducible is generated by $ \sigma^{I\cap M}$, $P(\sigma)=P(\sigma^{I\cap M})$  \cite[Thm.3.13]{AHenV2}.
\end{proof}

5. The   representations $ I_{H(G)}(P, \cV ,Q)\otimes_{H(G)_R}R[I\backslash G]$ and $ I_{G}(P, \cV \otimes_{H(M)_R}R[(I\cap M)\backslash M] , Q)$ 
are isomorphic when  $\cV$ is simple and supersingular (when $\cV \otimes_{H(M)_R}R[(I\cap M)\backslash M]= 0$ or   $P(\cV)=P(\cV \otimes_{H(M)_R}R[(I\cap M)\backslash M])$ when it is not $0$, more generally)  \cite[Cor. 5.12, 5.13]{AHenV2}.

\ \subsubsection{$\Ind_P^G(R)$ and  $\Ind_P^{H(G)}(R)$}\label{ss:sub2}

It is known that  $\Ind_P^G(R)$  is multiplicity free of irreducible subquotients $\St_Q^G(R)$ and  $\Ind_Q^G (R) $  is the   subrepresentation of  $\Ind_P^G (R)$ with cosocle $\St_Q^G(R)$, for  $P\subset Q\subset G$ \cite[\S9]{Ly}.  

Therefore,  sending  $\St_Q^G(R) $ for $P\subset Q$ to $\Delta_Q\setminus \Delta_P$ induces a lattice  isomorphism from $\mathcal L_{\Ind_P^G(R) }$  onto the set of upper sets in  $\mathcal P(\Delta \setminus \Delta_P)$;  to an upper set in $\mathcal P(\Delta \setminus \Delta_P)$ is associated the subrepresentation  $\sum_J\Ind_{P_{J \cup \Delta_P} }^G (R) $  for $J$  in the  upper set  \cite[Prop.3.6]{AHenV2}. 

The  $H(G)_R$-module
$\Ind_P^{H(G)}(R)$ has a filtration with quotients  $\St_Q^{H(G)}(R)$ for  $P\subset Q\subset G$. By the classification theorem, the $\St_Q^{H(G)}(R)$ are simple not isomorphic. So $\Ind_P^{H(G)}(R)$ is multiplicity free of simple subquotients  $\St_Q^{H(G)}(R)$  for  $P\subset Q\subset G$.    

Applying 1, 2 and 3 in \S\ref{S:54},  the natural map $$\Ind_P^{H(G)}(R)\otimes_{H(G)}\mathbb Z[I \backslash G]\to \Ind_P^G (R)$$
is an isomorphism and $\eta_{\Ind_P^{H(G)}(R)}$ is bijective.

 The properties a), b'), c') of Theorem \ref{thm:latticeA} are satisfied for  the functor $-\otimes_{H(G)}\mathbb Z[I\backslash G]:\Mod_R(H(G))\to \Mod_R(G)$ of right adjoint $(-)^I$, and the $H(G)_R$-module  $\Ind_P^{H(G)}(R)$. 
 So $(-\otimes_{H(G)}\mathbb Z[I\backslash G], (-)^I)$ give lattice isomorphisms between $\mathcal L_{\Ind_P^{H(G)}(R)}$ and $\mathcal L_{\Ind_P^G(R) }$.

\subsubsection{$\Ind_P^{G}(\St_{Q}^{M}(R))$ and $\Ind_P^{H(G)}( \St_Q^{H(M)}(R))$ for $Q\subset P$}\label{ss:sub3}   This  case is a direct consequence of  \S\ref{ss:sub2}   because 
$$\Ind_P^{G}(\St_{Q}^{M}(R))= \Ind_Q^{G} (R)/ \sum_{Q\subsetneq Q_1 \subset P} \Ind_{Q_1}^{G}  (R) $$  is a   quotient of  $\Ind_Q^{G}  (R)$. We deduce from  \S\ref{ss:sub2} that 
 $\Ind_P^{G}(\St_{Q}^{M}(R))$ is multiplicity free of irreducible subquotients  $\St_{Q'}^G(R)$ for $Q\subset Q'$ but  $Q'$ does not contain any $Q_1$ such that $Q\subsetneq Q_1 \subset P$, that is, $ Q = Q'\cap P$.  The  subrepresentation  $\Ind_P^{G}(\St_{Q'}^{M}(R))$ of $\Ind_P^{G}(\St_{Q}^{M}(R))$ has cosocle  $\St_{Q'}^G$.
Sending $\St_{Q'}^{G}(R)$  to $\Delta_{Q'}\cap (\Delta \setminus \Delta_P)$ gives a lattice isomorphism  from  $\mathcal L_{\Ind_P^{G}(\St_{Q}^{M}(R))}$ onto the lattice of upper sets in $\mathcal P (\Delta\setminus \Delta_P) $ (which does not depend on $Q$).  
We deduce also from \S\ref{ss:sub2} and Remark \ref{rem:latticesq} that $-\otimes_{H(G)}\mathbb Z[I\backslash G]$ and $ (-)^I$ give lattice isomorphisms between $\mathcal L_{\Ind_P^{H(G)}( \St_Q^{H(M)}(R))}$ and $\mathcal L_{\Ind_P^{G}(\St_{Q}^{M}(R)) }$.

\subsubsection{$\Ind_P^G\sigma$  for $\sigma$ irreducible admissible supersingular and   $\Ind_P^{H(G )}\cV $ for $\cV$ simple supersingular}\label{ss:sub4}   
 $\Ind_P^G\sigma $ admits a filtration with quotients $I_{G}(P,\sigma,Q)= \Ind_{P(\sigma)}^{G}(   \St_Q^{M(\sigma)}(\sigma))$ for $P\subset Q\subset P(\sigma)$ and by the classification theorem, the  $I_{G}(P,\sigma,Q)$ are irreducible  and not isomorphic; so $\Ind_P^G(\sigma )$ is multiplicity free of irreducible subquotients  $I_{G}(P,\sigma,Q)$ for the $R$-triples $(  P, \sigma, Q )$ of $G$. 
The  maps 
$$X\mapsto  e(\sigma)\otimes_R X\mapsto \Ind_{P(\sigma)}^G(e(\sigma)\otimes_R X):\mathcal L_{\Ind_P^{M(\sigma)}(R)}\to  \mathcal L_{e(\sigma)\otimes_R  \Ind_P^{M(\sigma)}(R)}   \to \mathcal L_{\Ind_P^G(\sigma)}$$ are lattice isomorphisms: this follows from the lattice theorems   and the classification theorem (Thm.\ref{thm:latticeA}, Thm.\ref{thm:latticeA2}, Thm.\ref{thm:classG}), as   in Proposition \ref{prop:tensorstG} (When $R$ is algebraically closed \cite[Prop.3.8]{AHenV1}). 

  For a simple supersingular $H(M)_R$-module $\cV$, the same arguments show that $\Ind_P^{H(G)}(\cV )$ is multiplicity free of simple subquotients  $I_{H(G)}(P,\cV,Q)$ for $  P\subset Q\subset P(\cV)$. The  maps 
$$Y\mapsto  e(\cV)\otimes_R Y\mapsto \Ind_{P(\cV)}^{H(G)}(e(\cV)\otimes_R Y):\mathcal L_{\Ind_P^{H(M(\cV))}(R)}\to  \mathcal L_{e(\cV)\otimes_R  \Ind_P^{H(M(\cV)}(R))}   \to \mathcal L_{\Ind_P^{H(G)}(\cV)}$$ are lattice isomorphisms, by applying Thm.\ref{thm:latticeA}, Thm.\ref{thm:latticeA2}, Thm.\ref{thm:classHG}, as in Proposition \ref{prop:tensorstH}.

\subsubsection{$\Ind_P^G ( \St_{Q}^{M}(\sigma_1))$   and 
$\Ind_P^{H(G)} ( \St_{Q}^{H(M)}(\cV_1))$ for an $R$-triple $(P_1 ,\sigma_1, P)$ of $G$,  $P_1 \subset Q\subset P $, $\sigma_1$ irreducible admissible supersingular and similarly for $\cV$} \label{ss:sub5} This is a direct consequence of \S\ref{ss:sub4}   because  $$\Ind_P^G ( \St_{Q}^{M}(\sigma_1)) =(\Ind_{Q}^Ge_{Q}(\sigma_1))/(\sum_{Q \subsetneq Q_1 \subset P}\Ind_{Q_1}^G e_{Q_1}(\sigma_1))$$
  is a subquotient of $\Ind_{P_1}^{G}(\sigma_1)$ as   $e_{Q}(\sigma_1) \subset \Ind_{P_1}^{M_Q}(\sigma_1)$ and similarly for $\cV$.
 We have 
$\Ind_{Q_1}^G e_{Q_1}(\sigma_1)\simeq  \Ind_{P(\sigma_1)}^G(e(\sigma_1)\otimes_R \ind_{Q_1}^{M(\sigma_1)}(R))$,  
 and a  lattice isomorphism  (\S\ref{ss:sub4}):
    $$X\mapsto \Ind_{P(\sigma_1)}^{G}(e(\sigma_1)\otimes_R X):\mathcal L_{\Ind_{P_1}^{M(\sigma_1)}(R)}\to \mathcal L_{\Ind_{P_1}^{G}(\sigma_1)}$$
    inducing a 
    lattice isomorphism  (Remark \ref{rem:latticesq}):
     $$\mathcal L_{\Ind_{P }^{M(\sigma_1)}(\St_Q^{M }(R))}\to \mathcal L_{\Ind_P^G ( \St_{Q}^{M}(\sigma_1)) } .$$
  The $R$-representation  $\Ind_P^G ( \St_{Q}^{M}(\sigma_1))$ is multiplicity free of irreducible subquotients $I_G(P_1,\sigma_1, Q')$ for the $R$-triples $(P_1,\sigma_1, Q')$ of $G$ with $Q'\cap P=Q$ (\S\ref{ss:sub3}).     And similarly for $\cV$ with the same arguments and references.

 \subsubsection{  $\Ind_P^G\sigma$ for $\sigma$  irreducible admissible and $\Ind_P^{H(G )}\cV $ for $\cV$ simple} \label{ss:sub6}  By the classification theorem, there exists an $R$-triple $(P_1,\sigma_1, Q)$ of $G$ with $Q\subset P$, $\sigma_1$ irreducible admissible supersingular such that 
$$\sigma\simeq I_M(P_1\cap M, \sigma_1, Q\cap M)= \Ind_{P(\sigma_1)\cap M }^M(  \St_{Q\cap M}^{M(\sigma_1)\cap M}(\sigma_1)).$$
The transitivity of the induction   implies  $\Ind_P^G \sigma \simeq  \Ind_{P(\sigma_1)\cap P}^G(  \St_{Q}^{M(\sigma_1)\cap M}(\sigma_1))$. This is  the  case 
 \S\ref{ss:sub5} with $P(\sigma_1)\cap P$. The $R$-representation $\Ind_P^G\sigma$ of $G$ is multiciplicity free of irreducible subquotients $I_G(P_1, \sigma_1, Q')$ for the $R$-triples $(P_1, \sigma_1, Q')$ of $G$ with $Q'\cap P=Q$ (note that $Q'\subset P(\sigma_1), Q \subset P$).
 The map
  \begin{align*}X\mapsto    \Ind_{P(\sigma_1) }^G(e(\sigma_1)\otimes_R X): & \ \mathcal L_{\Ind_{P(\sigma_1)\cap P}^{M(\sigma_1)}(\St_{Q}^{M(\sigma_1)\cap M}(R))}    \to \mathcal L_{\Ind_{P}^G (\sigma)} \end{align*}  
 is a lattice isomorphism.      And similarly for $\cV$ with the same arguments and references.

\subsubsection{Invariants by the pro-$p$ Iwahori} \label{ss:sub7}  We keep the notations of \S\ref{ss:sub6}. 
   The classification theorem  shows that  
$$ \sigma^{I\cap M} \ \text{ is simple } \ \Leftrightarrow \  \sigma_1^{I\cap M_1} \  \text{ is simple }$$
because  $\sigma^{I\cap M}\simeq I_{H(M)}(P_1\cap M, \sigma_1^{I\cap M_1}, Q\cap M)$ (\S\ref{ss:sub1}) and  $\sigma_1^{I\cap M_1}$ is supersingular of finite length. 
   
\medskip    Assume  first  that $P(\sigma_1)=P(\cV_1)$ in \S\ref{ss:sub6}.  In \S\ref{ss:sub3} we saw that the  maps \begin{equation}\label{eq:sub6}X\mapsto X^{I\cap M(\sigma_1)}, \quad Y\mapsto Y\otimes_{H(M(\sigma_1))} \mathbb Z[I\cap M(\sigma_1)\backslash M(\sigma_1)]
\end{equation} between  $\mathcal L_{\Ind_{P(\sigma_1)\cap P}^{M(\sigma_1)}(\St_{Q}^{M(\sigma_1)\cap M}(R))}  $ and $ \mathcal L_{\Ind_{P(\sigma_1)\cap P}^{H(M(\sigma_1))}(\St_{Q}^{H(M(\sigma_1)\cap M)}(R))}$, 
 are
   lattice isomorphisms, inverse from each other.     
They induce   lattice isomorphisms,  inverse of each other,  between   $\mathcal L_{\Ind_{P}^G (\sigma)}$ and    $\mathcal L_{\Ind_P^{H(G )}(\cV )}$: 
  \begin{align} \label{eq:l1} \Ind_{P(\sigma_1) }^G(e(\sigma_1)\otimes_R X) &\mapsto  \Ind_{P(\cV_1) }^{H(G )}(e(\cV_1)\otimes_R X^{I\cap M(\sigma_1)}), \\
  \label{eq:l2}\Ind_{P(\cV_1) }^{H(G )}(e(\cV_1)\otimes_R Y) &\mapsto    \Ind_{P(\sigma_1) }^G(e(\sigma_1)\otimes_R (Y\otimes_{H(M(\sigma_1))} \mathbb Z[I\cap M(\sigma_1)\backslash M(\sigma_1)])).
\end{align}
by the lattice isomorphisms of  \S\ref{ss:sub6}  with $\mathcal L_{\Ind_{P}^G (\sigma)}$ and    $\mathcal L_{\Ind_P^{H(G )}(\cV )}$

Assume  now (until the end of \S\ref{ss:sub7}) that $\sigma^{I\cap M} $ and  $\sigma_1^{I\cap M_1} $  are simple, and that the natural map $\sigma^{I\cap M}\otimes_{H(M)}\mathbb Z[(I\cap M)\backslash M] \to \sigma$ is injective.

Then $P(\sigma_1)=P(\sigma_1^{I\cap M_1})$ (Lemma  \ref{lem:sub1}). When $\cV_1= \sigma_1^{I\cap M_1}$,  the lattice isomorphisms \eqref{eq:l1}, \eqref{eq:l2}   between $\mathcal L_{\Ind_P^G (  \sigma)}$ and $  \mathcal L_{\Ind_P^{H(G )}(  \sigma^{I\cap M}) }$ are simply given by $(-)^I$ and  $-\otimes_{H(M(G)} \mathbb Z[I \backslash G]$:
\begin{align}\label{eq:sub7}\Ind_{P(\sigma_1) }^G(e(\sigma_1)\otimes_R X)&\mapsto (\Ind_{P(\sigma_1) }^G(e(\sigma_1)\otimes_R X))^{I\ },\\
\Ind_{P(\sigma_1) }^{H(G )}(e(\sigma_1^{I\cap M_1})\otimes_R Y) & \mapsto (\Ind_{P(\sigma_1) }^{H(G )}(e(\sigma_1^{I\cap M_1})\otimes_R Y))\otimes_{H(M(G)} \mathbb Z[I \backslash G].
\end{align}
This follows from the lattice theorem (Thm.\ref{thm:latticeA}) applied to the functor $- \otimes_{H(G)}\mathbb Z[I\backslash G]:\Mod_R(H(G))\to \Mod_R(G)$ of right adjoint $(-)^I$ and to $ \Ind_P^{H(G )}(\sigma^{I\cap M} )$ after having checked that they satisfy the  hypotheses a), b'), c') of this theorem. 

As the map $\sigma^{I\cap M}\otimes_{H(M)}\mathbb Z[(I\cap M)\backslash M] \to \sigma$ is injective, $\sigma$ irreducible, and as the parabolic induction commutes with  $- \otimes_{H(G)}\mathbb Z[I\backslash G]$, the natural map 
$$\Ind_P^{H(G )}(\sigma^{I\cap M} )\otimes_{H(G)}\mathbb Z[I\backslash G])\to \Ind_P^G\sigma$$ is an isomorphism. The commutativity of the parabolic induction with $(-)^I $ (\S\ref{ss:sub1}) implies that  $ \Ind_P^{H(G )}(\sigma^{I\cap M} )\to ( \Ind_P^{H(G )}(\sigma^{I\cap M} )\otimes_{H(G)}\mathbb Z[I\backslash G])^I$ is an isomorphism, i.e. a).
 The $H(G)_R$-module $ \Ind_P^{H(G )}(\sigma^{I\cap M} )$ and $\Ind_P^G\sigma$ have finite length (\S\ref{ss:sub6}), i.e. c'). 
 For b'), i.e. $\pi^I$ is simple for any
 irreducible subquotient $\pi$ of $ \Ind_Q^G (  \sigma)$,    we write   $\pi\simeq  I_G(P_1,\sigma_1,Q')$ for an $R$-triple $(P_1,\sigma_1,Q')$ of $G$ with $Q'\cap P=Q$. By lemma \ref{lem:sub1},  $I_G(P_1,\sigma_1,Q')^I \simeq I_{H(G)}(P_1,\sigma_1^{I\cap M_1},Q')$  and $I_{H(G)}(P_1,\sigma_1^{I\cap M_1},Q')$ is simple by the classification theorem.

This ends the proof of Theorem \ref{thm:lattice}.

\section{Appendix: Eight inductions   $\Mod_R(H(M))\to \Mod_R(H(G))$} \label{s:ind}
 
For  a commutative ring $R$ and a parabolic subgroup $P=MN$ of $G$, there are  eight different   inductions 
 $\Mod_R(H(M))\to \Mod_R(H(G))$
$$-        \otimes_{H(M^\epsilon),\theta^{\eta}}H(G)  \quad and \quad  \Hom_{H(M^\epsilon),\theta^{\eta}}(H(G), -)\quad {for}\ \epsilon\in \{+,-\}, \ \eta\in \{ \ , *\}.$$
  associated to  the eight    elements  of  $\{\otimes, \Hom\} \times \{+,-\}\times \{\theta, \theta^*\}$ where  $\theta:=\theta_M^G$ \ref{s:ring}. We  We write   $\{ \theta^\eta, \theta^{*\eta}\}=\{\theta, \theta^*\}$ (as sets).
The triple $(\otimes ,+, \theta)$ corresponds to 
    the parabolic induction $\Ind_P^{H(G)}(-)=-\otimes_{H(M^+),\theta} H(G)$     and the triple $(\Hom ,-,\theta^*)$ corresponds to  $CI_P^{H(G)}(-) =\Hom_{H(M^-),\theta^*}(H(G), -)$  that we call parabolic coinduction.  The propositions (Prop.\ref{prop:exc}, Prop.\ref{prop:duality}) comparing these  eight inductions, are extracted from \cite{Abeparind} and \cite{Abeinv}. To formulate   them we   need first to define the `` twist by  $n_{w_Gw_M}$'' and the  involution $\iota^M_{\ell-\ell_M}$of $H(M)$.

\medskip 

  {\bf Twist by $n_{w_Gw_M}$.} Let  $w_M=w_P$ be  the longest element of the    Weyl group of $\Delta_M=\Delta_P$,  and  $w\mapsto n_w:\mathbb W \to W$ is an injective homomorphism from the Weyl group $\mathbb W$ of $\Delta$ to $W$ satisfying the braid relations (there is no canonical choice). 

Let $P^{op}=M^{op}N^{op}$ denote the parabolic subgroup of $G$ (containing $B$) with $\Delta_{M^{op}}=\Delta_{P^{op}}= w_Gw_P(\Delta_P)=w_G(-\Delta_P)$ (image of $\Delta_P$ by  the opposition involution  $\alpha\mapsto w_G(-\alpha)$ \cite[1.5.1]{T}).  The twist by $n_{w_Gw_M}$  is the  ring isomorphism  \cite[\S4.3]{Abe} $$H(M)\to H(M^{op}) \quad ( T^M_w,  T^{M,*}_w)\mapsto  (T^{M^{op}}_{n_{w_Gw_M} w n_{w_Gw_M}^{-1}}, T^{M^{op},*}_{n_{w_Gw_M} w n_{w_Gw_M}^{-1}}) \quad\text{ for} \  w\in W_M.  
$$
It restricts to an isomorphim  $H(M^\epsilon)\to H(M^{op,-\epsilon})$  (\cite[Prop.2.20]{VigpIwst}.
The inverse of the twist by $n_{w_Gw_M}$ is the twist by $n_{w_Gw_{M^{op}}}$, as  $n_{w_G w_{P^{op}}}=n_{w_P w_G}=n_{w_G w_P}^{-1}$.

By functoriality the twist by   by $n_{w_Gw_M}$ gives a functor 
$$\Mod_R(H(M))\xrightarrow{n_{w_Gw_M}(-)}
\Mod_R(H(M^{op})).$$

 \medskip 
  {\bf  Involution $\iota^M_{\ell-\ell_M}$}   \cite[\S 4.1]{Abeparind}.  The two commuting involutions $\iota^M$ and $\iota_{\ell-\ell_M}$  of the ring $H(M) $:

 $(T^M_w, T^{M,*}_w)\xrightarrow{\iota^M}  (-1)^{\ell_M(w)}(T^{M,*}_w , T^{M}_w)$   \cite[Prop. 4.23]{VigpIw},
 
  $ (T^M_w, T^{M,*}_w)\xrightarrow{\iota_{\ell-\ell_M}}  (-1)^{\ell(w)-\ell_M(w)} (T^M_w,   T^{M,*}_w)$   \cite[Lemmas 4.2, 4.3, 4.4, 4.5]{Abeparind}.
 
\noindent  give by   composition an involution $\iota^M_{\ell-\ell_M}$  of $H(M)$ 

$(T^M_w, T^{M,*}_w)\xrightarrow{\iota^M_{\ell-\ell_M}}(-1)^{\ell(w)}(T^{M,*}_w, T^{M}_w)$.

\noindent   The twist by $n_{w_Gw_M}$ and the involution $\iota^M_{\ell-\ell_M}$ commute: 
$$n_{w_Gw_M}(-) \circ \iota^M_{\ell-\ell_M} = \iota^{M^{op}}_{\ell-\ell_{M^{op}}} \circ n_{w_Gw_M}(-) :  H(M)\to H(M^{op}),$$   send  $T^M_w$ for $w\in W_M$ to
  $$ (-1)^{\ell(n_{w_Gw_M}wn_{w_Gw_M}^{-1})} T^{M^{op},*}_{n_{w_Gw_M}wn_{w_Gw_M}^{-1}}=
 (-1)^{\ell(w)}T^{M^{op},*}_{n_{w_Gw_M}wn_{w_Gw_M}^{-1}}$$
 (for the equality,  recall that the length $\ell_M$  of $W_M$  is invariant by conjugation by $w_M$, and
$ \ell(n_{w_Gw_M}wn_{w_Gw_M}^{-1})=\ell(n_{w_G} n_{w_M}wn_{w_M}^{-1}n_{w_G}^{-1})=\ell( n_{w_M}wn_{w_M}^{-1})=\ell( w )$). 
By functoriality we get   a functor 
$$\Mod_R(H(M))\xrightarrow{ (-)^{\iota^M_{\ell-\ell_M}}}
\Mod_R(H(M )).$$
 When $M=G$, we write simply  $\iota^G$. 

\medskip  We are now ready for the comparison of the eight inductions, which follows from   different  propositions in \cite{Abeparind}. In the  following propositions,   $\cV$ is any right $H(M)_R$-module.

\begin{maprop} \label{prop:exc} Exchanging  $+, - $ corresponds to the  twist    by $n_{w_Gw_M}$,
\begin{align} \label{eq:twist2'} 
\cV\otimes_{H(M^\epsilon),\theta^{\eta}}H(G) &\simeq    n_{w_Gw_M}(\cV)\otimes_{H(M^{op,-\epsilon}),\theta^{\eta}}H(G),\\ 
\label{eq:twist3'}
  \Hom_{H(M^\epsilon),\theta^{\eta}}(H(G), \cV)
&\simeq       \Hom_{H(M^{op,-\epsilon}),\theta^{\eta}}(H(G),n_{w_Gw_M}(\cV)).
 \end{align}
 Exchanging   $\theta, \theta^*$ corresponds to the  involutions $\iota^M_{\ell-\ell_M}$  and $ \iota^G$. 
   \begin{align}\label{eq:inv1}( \cV\otimes_{ H(M^\epsilon),\theta^{\eta}}H(G))^{\iota^G}&\simeq  \cV^{\iota^M_{\ell-\ell_M}}\otimes_{ H(M^{\epsilon}),\theta^{*\eta}}H(G) ,\\
 \label{eq:inv2e}
 \Hom_{H(M^\epsilon),\theta^{\eta}}(H(G), \cV)^{\iota^G} &\simeq \Hom_{H(M^\epsilon),\theta^{*\eta}}(H(G), \cV^{\iota^M_{\ell-\ell_M}}).   \end{align} 
Exchanging $\otimes ,\Hom$ corresponds to the  involutions $\iota^M_{\ell-\ell_M}$  and $\iota^G$,  \begin{align}\label{eq:inv3e} (\cV \otimes_{H(M^\epsilon),\theta^\eta}H(G))^{\iota^G} \simeq \Hom_{H(M^\epsilon),\theta^\eta}(H(G),  \cV^{\iota^M_{\ell-\ell_M}}). 
\end{align}
 \end{maprop}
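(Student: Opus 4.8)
The plan is to read each of the three displayed families of isomorphisms as an instance of transport of module structure: for the $+/-$ exchange, along the ring isomorphism ``twist by $n_{w_Gw_M}$''; for the $\theta/\theta^*$ and the $\otimes/\Hom$ exchanges, along the ring involutions $\iota^M_{\ell-\ell_M}$ and $\iota^G$. In each case the actual content is a single compatibility identity relating the linear maps $\theta^G_M$, $\theta^{G,*}_M$, the twist, and the two involutions; all of these identities are already recorded in \cite{VigpIw}, \cite{VigpIwst}, \cite{Abeparind} and \cite{Abeinv}, so the proof is a bookkeeping assembly of those references.

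For \eqref{eq:twist2'} and \eqref{eq:twist3'}, I would first recall that the twist $H(M)\to H(M^{op})$ restricts to a ring isomorphism $H(M^\epsilon)\xrightarrow{\sim}H(M^{op,-\epsilon})$ by \cite[Prop.~2.20]{VigpIwst}, and that left multiplication by $T(n_{w_Gw_M})$ on $H(G)$ carries the left $H(M^\epsilon)$-action through $\theta^\eta_M$ to the left $H(M^{op,-\epsilon})$-action through $\theta^\eta_{M^{op}}$ precomposed with the twist. The identity needed here is the length additivity $\ell(n_{w_Gw_M}w)=\ell(n_{w_Gw_M})+\ell(w)=\ell(n_{w_Gw_M}wn_{w_Gw_M}^{-1})+\ell(n_{w_Gw_M})$ for $w$ in the image of $M^\epsilon$, which is essentially \cite[Thm.~1.8]{VigpIwst}, already used in the proof of Proposition~\ref{prop:RI} via $I_{P^{op}}\circ n_{w_Mw_G}\simeq\Ind_P^{H(G)}$. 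Transporting $-\otimes H(G)$ and $\Hom(H(G),-)$ along this intertwiner gives the two isomorphisms, the inverses being transport along multiplication by $T(n_{w_Mw_G})=T(n_{w_Gw_M})^{-1}$.

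For \eqref{eq:inv1} and \eqref{eq:inv2e}, the key observation is that the defining formulas for $\iota^M$ (\cite[Prop.~4.23]{VigpIw}), for $\iota_{\ell-\ell_M}$ (\cite[Lemmas~4.2--4.5]{Abeparind}) and for $\theta^G_M,\theta^{G,*}_M$ yield at once
\[\iota^G\circ\theta^\eta_M=\theta^{*\eta}_M\circ\iota^M_{\ell-\ell_M}\quad\text{on }H(M^\epsilon),\]
since on the natural basis both sides send $T^M(w)$ to $(-1)^{\ell(w)}$ times the relevant basis element of $H(G)$, the sign factoring as $(-1)^{\ell(w)-\ell_M(w)}(-1)^{\ell_M(w)}$. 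Then the $\iota^G$-twist of $\cV\otimes_{H(M^\epsilon),\theta^\eta}H(G)$ (resp. of $\Hom_{H(M^\epsilon),\theta^\eta}(H(G),\cV)$) is carried isomorphically onto $\cV^{\iota^M_{\ell-\ell_M}}\otimes_{H(M^\epsilon),\theta^{*\eta}}H(G)$ (resp. onto $\Hom_{H(M^\epsilon),\theta^{*\eta}}(H(G),\cV^{\iota^M_{\ell-\ell_M}})$) by $v\otimes h\mapsto v\otimes\iota^G(h)$ and the analogous map for the $\Hom$-version, well-definedness being exactly the displayed identity; since $\iota^G$ and $\iota^M_{\ell-\ell_M}$ are involutions, the inverse has the same shape.

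The exchange $\otimes/\Hom$ in \eqref{eq:inv3e} is the only genuinely non-formal input, and I expect it to be the main obstacle: it says that the $(H(M^\epsilon),H(G))$-bimodule $H(G)$, with $H(M^\epsilon)$ acting through $\theta^\eta$, defines the same functor by $-\otimes H(G)$ and by $\Hom(H(G),-)$ once source and target are twisted by $\iota^M_{\ell-\ell_M}$ and $\iota^G$. This rests on the Frobenius-type self-duality of $H(G)$ over $H(M^\epsilon)$, and is the corresponding proposition of \cite{Abeparind} combined with the involution results of \cite{Abeinv}. Finally I would note that \eqref{eq:inv3e} together with \eqref{eq:inv2e} and $(\iota^M_{\ell-\ell_M})^2=\mathrm{id}$ recovers the isomorphism $-\otimes_{H(M^+),\theta}H(G)\simeq\Hom_{H(M^+),\theta^*}(H(G),-)$ quoted in \S\ref{S:V3}.
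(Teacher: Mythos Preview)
Your proposal is correct in spirit and in detail. In fact you have written out considerably more than the paper does: the paper gives no proof of Proposition~\ref{prop:exc} at all, simply stating in the preamble to the appendix that ``the propositions (Prop.~\ref{prop:exc}, Prop.~\ref{prop:duality}) comparing these eight inductions, are extracted from \cite{Abeparind} and \cite{Abeinv}''. Your outline --- transport along the twist isomorphism for the $+/-$ exchange (via \cite[Prop.~2.20]{VigpIwst} and the length identity underlying \cite[Thm.~1.8]{VigpIwst}), the identity $\iota^G\circ\theta^\eta_M=\theta^{*\eta}_M\circ\iota^M_{\ell-\ell_M}$ for the $\theta/\theta^*$ exchange, and the Frobenius-type self-duality from \cite{Abeparind}, \cite{Abeinv} for the $\otimes/\Hom$ exchange --- is exactly the content one extracts from those references, so your approach and the paper's (implicit) approach coincide.
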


\begin{remark}{\rm 
One can exchange  $(\otimes, \theta^\eta)$ and $(\Hom, \theta^{*\eta})$ without changing the isomorphism class:
\begin{equation}\label{eq:nothing}\cV\otimes_{ H(M^\epsilon),\theta^{\eta}}H(G) \simeq \Hom_{H(M^\epsilon),\theta^{*\eta}}(H(G), \cV).
\end{equation}}
\end{remark}

{\bf Duality} Let $\zeta$ the anti-involution of $H(G)$ defined by $\zeta(T_w)=T_{w^{-1}}$ for $w\in W$; we have also $\zeta(T^*_w)=T^*_{w^{-1}}$ \cite[Remark 2.12]{Vigadjoint}. 
 The dual  of a right $H(G)_R$-module $\cx$ is $\cx^*=\Hom_R(\cx,R)$ where $h\in H(G)_R$  acts on $f\in \pi^*$ by $(fh)(x)=f(x \zeta(h))$ \cite[Introduction]{Abeinv}.

\begin{maprop}\label{prop:duality} The dual exchanges  $(\otimes, + )$ and  $(\Hom, -)$:  
\begin{align}\label{eq:dual1}(\cV\otimes_{ H(M^\epsilon),\theta^{\eta}}H(G))^* \simeq \Hom_{H(M^{-\epsilon}),\theta^{\eta}}(H(G), \cV^*),\\
\label{eq:dual2} \cV^*\otimes_{ H(M^\epsilon),\theta^{\eta}}H(G)\simeq (\Hom_{H(M^{-\epsilon}),\theta^{\eta}}(H(G), \cV))^* . \end{align}
 \end{maprop}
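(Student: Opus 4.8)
The plan is to reduce both isomorphisms of Proposition~\ref{prop:duality} to formula \eqref{eq:dual1} alone, which is a $\zeta$-twisted form of the tensor--Hom adjunction over $R$; formula \eqref{eq:dual2} will then follow formally from \eqref{eq:dual1} together with the remark \eqref{eq:nothing}, with no further input. In substance all of this is contained in \cite{Abeinv}; what follows indicates how the pieces fit and, in particular, that no finiteness hypothesis on $\cV$ is needed.

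First I would set up the anti-involutions. Besides $\zeta$ on $H(G)$, write $\zeta_M$ for the anti-involution of $H(M)$ with $\zeta_M(T^M_w)=T^M_{w^{-1}}$; by the analogue for $M$ of \cite[Remark~2.12]{Vigadjoint} one also has $\zeta_M(T^{M,*}_w)=T^{M,*}_{w^{-1}}$. Comparing values on the natural and on the $*$-bases gives $\zeta\circ\theta_M^{G,\eta}=\theta_M^{G,\eta}\circ\zeta_M$ as $R$-linear maps for both choices of $\eta$; and since $M^-=(M^+)^{-1}$, the anti-involution $\zeta_M$ interchanges $H(M^+)$ and $H(M^-)$, so $\zeta$ restricts to an anti-isomorphism $\theta^\eta(H(M^\epsilon))\xrightarrow{\ \sim\ }\theta^\eta(H(M^{-\epsilon}))$. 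Finally, for a right $H(M)_R$-module $\cV$, the dual module $\cV^*$ (with $(fh)(v)=f(v\zeta_M(h))$), restricted to $H(M^{-\epsilon})$, coincides with the right $H(M^{-\epsilon})$-module obtained from the \emph{left} $H(M^\epsilon)$-module $\Hom_R(\cV,R)$ --- with action $(bf)(v)=f(vb)$ --- by transport of structure along the anti-isomorphism $\zeta_M\colon H(M^{-\epsilon})\to H(M^\epsilon)$.

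Then I would prove \eqref{eq:dual1}. Put $B=\theta^\eta(H(M^\epsilon))\subseteq A=H(G)$. The tensor--Hom adjunction over $R$ gives a natural isomorphism $\Hom_R(\cV\otimes_B A,\,R)\simeq\Hom_B({}_{B}A,\ \Hom_R(\cV,R))$, where ${}_{B}A$ is $H(G)$ with its left $B$-module structure by left multiplication, $\Hom_R(\cV,R)$ carries the left $B$-module structure $(bf)(v)=f(vb)$, the Hom is of left $B$-modules, and both sides acquire an $H(G)$-action from the remaining multiplication $H(G)$-action on $A=H(G)$. Conjugating the $A$-slot by $\zeta$ and the coefficient module by $\zeta_M$, and using the compatibilities of the previous paragraph, identifies the target with $\Hom_{H(M^{-\epsilon}),\theta^\eta}(H(G),\cV^*)$ in the conventions of \S\ref{s:ind} and turns the $H(G)$-action into the right $H(G)_R$-action dictated by the coinduction convention and by $(\ )^*$; tracking these identifications through yields \eqref{eq:dual1}.

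Finally, \eqref{eq:dual2} follows from \eqref{eq:dual1} and \eqref{eq:nothing}: by \eqref{eq:nothing} one has $\Hom_{H(M^{-\epsilon}),\theta^\eta}(H(G),\cV)\simeq\cV\otimes_{H(M^{-\epsilon}),\theta^{*\eta}}H(G)$; applying $(\ )^*$ and then \eqref{eq:dual1} (in the variant with $-\epsilon$ and $*\eta$) gives $\Hom_{H(M^\epsilon),\theta^{*\eta}}(H(G),\cV^*)$; and \eqref{eq:nothing} once more (now for $\cV^*$) identifies this with $\cV^*\otimes_{H(M^\epsilon),\theta^\eta}H(G)$, the left-hand side of \eqref{eq:dual2}. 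Note that this route never invokes biduality $\cV^{**}\cong\cV$, so Proposition~\ref{prop:duality} holds for arbitrary $\cV$, all the isomorphisms used (the $R$-linear tensor--Hom adjunction, \eqref{eq:nothing}, \eqref{eq:dual1}) being unconditional. I expect the only real difficulty to be bookkeeping: keeping straight which module structure on $H(G)$ is in play at each step, and checking that the resulting identifications are equivariant for the $\zeta$-twisted right $H(G)_R$-action implicit in the duality $(\ )^*$ and in the coinduction functor.
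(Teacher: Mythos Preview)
Your argument is correct, and the reduction of \eqref{eq:dual2} to \eqref{eq:dual1} via two applications of \eqref{eq:nothing} matches the paper's. Where you differ is in how \eqref{eq:dual1} itself is established. The paper proves \eqref{eq:dual1} only in the base case $(\epsilon,\eta)=(+,\emptyset)$ by appeal to \cite[\S4.1]{Abeinv}, and then bootstraps: it reaches $(-,\theta)$ by applying the $(+,\theta)$ case to $(M^{op},n_{w_Gw_M}\cV)$ and invoking the twist isomorphisms \eqref{eq:twist2'}, \eqref{eq:twist3'} of Proposition~\ref{prop:exc}; it then reaches $(\epsilon,\theta^*)$ from $(\epsilon,\theta)$ by applying $\iota^G$ and using \eqref{eq:inv1}, \eqref{eq:inv2e}, together with the observation that $\zeta_M$ commutes with $\iota^M_{\ell-\ell_M}$ so that $(\cV^{\iota^M_{\ell-\ell_M}})^*\simeq(\cV^*)^{\iota^M_{\ell-\ell_M}}$. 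You instead prove \eqref{eq:dual1} for all four $(\epsilon,\eta)$ in one stroke, directly from the tensor--Hom adjunction over $R$ and the compatibility $\zeta\circ\theta^\eta=\theta^\eta\circ\zeta_M$, $\zeta_M(H(M^\epsilon))=H(M^{-\epsilon})$.

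Your route is more self-contained: it does not use Proposition~\ref{prop:exc} at all, and it makes transparent that \eqref{eq:dual1} is nothing but the $\zeta$-twisted form of the ordinary adjunction---which is in effect what \cite{Abeinv} does in the single case the paper cites. The paper's route, by contrast, puts the symmetries of Proposition~\ref{prop:exc} to work and thereby situates \eqref{eq:dual1} within the web of relations among the eight inductions. Both are valid; yours is the cleaner proof of the proposition in isolation.
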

\begin{proof} Applying  \eqref{eq:nothing}, the upper isomorphism  \eqref{eq:dual1}  for an arbitray $(\epsilon, \theta^\eta)$ is equivalent to the lower isomorphism  \eqref{eq:dual2}  for an arbitray $(\epsilon, \theta^\eta)$.  

We prove the upper isomorphism for  an arbitray $(\epsilon, \theta^\eta)$. For $ (+,  \theta)$, it is implicit in  \cite[\S 4.1]{Abeinv}. Applying it to the twist by $n_{w_Gw_P}$ of $(M,\cV)$ and using  \eqref{eq:twist2'} \eqref{eq:twist3'}, we get  \eqref{eq:dual1} for  $ (-,  \theta)$.  The image by $\iota^G$ of the upper isomorphism \eqref{eq:dual1} for $(\epsilon, \theta)$ and $\cV ^{\iota^M_{\ell-\ell_M}}$ is  \eqref{eq:dual1} for $(\epsilon, \theta^*)$ and  $\cV$, because   
the anti-involution $\zeta_M$  of $H(M)$ commutes with the involution $\iota^M_{\ell-\ell_M}$,  their composite in any order sends $(T_w^M, T_w^{M,*}) $ to $(-1)^{\ell(w)}(T_{w^{-1}}^{M,*},T_{w^{-1}}^M )$  for $w\in W_M$,   as $\ell(w)=\ell(w^{-1})$.   \end{proof}


\begin{thebibliography}{SGA3}

\bibitem[Abe]{Abe} N. \textsc{Abe}. \textit{Modulo p parabolic induction of pro-p-Iwahori Hecke algebra,} J. Reine Angew. Math., DOI:10.1515/crelle-2016-0043.

\bibitem[Abeparind]{Abeparind}
N. \textsc{Abe}. \textit{Parabolic induction for pro-$p$ Iwahori Hecke algebras,} arXiv:1612.01312.

\bibitem[Abeinv]{Abeinv}
N. \textsc{Abe}. \textit{Involutions on pro-$p$-Iwahori Hecke algebras,} arXiv:1704.00408v1.

 

\bibitem[AHHV]{AHHV} N. \textsc{Abe}, G. \textsc{Henniart}, Fl. \textsc{Herzig}, M.-F. \textsc{Vigneras}  \textit{A classification of irreducible admissible mod p representations of p-adic reductive groups, } J. Amer. Math. Soc. 30 (2017), 495-559.

\bibitem[AHenV1]{AHenV1}N. \textsc{Abe}, G. \textsc{Henniart},  M.-F. \textsc{Vigneras}   \textit{Modulo $p$ representations of reductive $p$-adic groups: Functorial properties}. Transactions of the AMS (to appear).


\bibitem[AHenV2]{AHenV2}N. \textsc{Abe}, G. \textsc{Henniart},  M.-F. \textsc{Vigneras} \textit{On pro-$p$-Iwahori invariants of $R$-representations of reductive $p$-adic groups}.








\bibitem[BkiA2]{BkiA2} 
N. \textsc{Bourbaki}. \textit{Alg\`ebre Chapitres 1 \`a 3} Hermann 1970.

\bibitem[BkiA8]{BkiA8}
N. \textsc{Bourbaki}. \textit{Alg\`ebre Chapitre 8 } Springer, 2012.








 \bibitem[CR]{CR} \textsc{Curtis Reiner} \textit{Methods of Representation Theory, vol. I ,} Wiley Interscience 1981.








\bibitem[Eis]{Eis}M. \textsc{D. Eisenbud}. \textit{Commutative Algebra with a View Toward Algebraic Geometry} Springer, coll. Ç GTM È (no 150), 2004.





\bibitem[Hn]{Hn}G. \textsc{Henniart}. \textit{Sur les repr\'esentations modulo $p$ de groupes r\'eductifs $p$-adiques. Automorphic forms and $L$ functions II. Local aspects,} 41--55, Contemp. Math., 489, Amer. Math. Soc., Providence, RI, 2009.



\bibitem[He]{He}
F. \textsc{Herzig}. \textit{The classification of irreducible admissible mod $p$ representations of a $p$-adic $\GL_n$,} Invent. Math. 186 (2011), no. 2, 373--434.

\bibitem[KS]{KS} M. \textsc{Kashiwara}, P. \textsc{Shapira}.  \textit{Categories and Sheaves,} Grundlehren des mathematischen Wissenschaften, Vol. 332, Springer-Verlag 2006. Errata in http://webusers.imj-prg.fr/~pierre.schapira/books.

\bibitem[K]{K}  K.\textsc{Koziol}.  \textit{Pro-$p$-Iwahori invariants for $SL_2$ and $L$-packets of Hecke modules} arXiv:1308.6239v3.


\bibitem[Kohl]{Kohl} J. \textsc{Kohlhaase}  \textit{Smooth duality in natural characteristic}/ Advances in Math. 317 (2017) 1-49.

 \bibitem[Lang]{Lang} S. \textsc{Lang}.  \textit{Algebra,} Addison-Wesley 1984.

\bibitem[Ly]{Ly}
T. \textsc{Ly}. \textit{Repr\'esentations de Steinberg modulo $p$ pour un groupe r\'eductif sur un corps local}, 
Pacific J. Math. 277 (2015), no. 2, 425--462.



\bibitem[Oss]{Oss} R. \textsc{Ollivier}.  \textit{Compatibility between Satake and Bernstein isomorphisms in characteristic $p$}  Algebra Number Theory 8 (2014), no.1071--1111.


\bibitem[O]{O} R. \textsc{Ollivier}.  \textit{Le foncteur des invariants sous l'action du pro-$p$-Iwahori de $GL_2(F)$}  J. reine angew. Math. 635 (2009), 149--185.


\bibitem[OS]{OS} R. \textsc{Ollivier}, P. \textsc{Schneider}. \textit{A canonical torsion for pro-$p$ Iwahori-Hecke modules}, 	ArXiv:1602.00738 [math.RT]
 
\bibitem[OV]{OV} R. \textsc{Ollivier}, M.-F. \textsc{Vign\'eras}. \textit{Parabolic induction in characteristic $p$}  2017, ArXiv 1703.04921v1
\bibitem[Viglivre]{Viglivre}
M.-F. \textsc{Vign\'eras}. \textit{Repr\'esentations $l$-modulaires d'un groupe r\'eductif $p$-adique avec $l\ne p$}, Progress in Math. 137, Birkhauser, Boston (1996).

\bibitem[T]{T} J. \textsc{Tits}. \textit{Classification of Algebraic Semisimple groups}.
Proc. Sympos. Pure Math., vol. 9, Amer. Math. Soc., 1966, 33Ð62.


\bibitem[Vigadjoint]{Vigadjoint}
M.-F. \textsc{Vign\'eras}. \textit{ The right adjoint of the parabolic induction}, in  \textit{Arbeitstagung Bonn 2013: In Memory of Friedrich Hirzebruch}.  Progress in Math. Birkhauser, 2016.

\bibitem[VigpIw]{VigpIw}
M.-F. \textsc{Vign\'eras}. \textit{ The pro-$p$-Iwahori Hecke algebra of a reductive $p$-adic group I}.  Compositio mathematicae 152, vol.7, No1, 693--753 (2016).

\bibitem[VigpIwc]{VigpIwc}
M.-F. \textsc{Vign\'eras}. \textit{ The pro-$p$-Iwahori Hecke algebra of a reductive $p$-adic group II}. A volume in the honour of Peter Schneider.  Muenster J. of Math. Vol. 7, No 1, 2014 (364-379). 

\bibitem[VigpIwss]{VigpIwss}
M.-F. \textsc{Vign\'eras}. \textit{ The pro-$p$-Iwahori Hecke algebra of a reductive $p$-adic group III}.  Journal of the Institute of Mathematics of Jussieu  (2015) 1-38. 


\bibitem[VigpIwst]{VigpIwst}
M.-F. \textsc{Vign\'eras}. \textit{The pro-$p$-Iwahori Hecke algebra of a reductive $p$-adic group V}. In memoriam: Robert Steinberg. Pacific J. of Math. Vol. 279, No,1-2, (2015) 499-529.


\end{thebibliography}
 \end{document}